\documentclass[12pt]{amsart}
\usepackage{textcomp}
\usepackage{amsmath,amsfonts,amssymb,amsthm, multicol}
\usepackage{anysize}
\marginsize{2cm}{2cm}{2cm}{2cm}
\usepackage{graphicx}
\usepackage{color}
\usepackage[pdftex]{hyperref}
\usepackage{comment}
\usepackage{tikz}
    \usetikzlibrary{cd} 
    \usetikzlibrary{calc} 

\usepackage{enumerate}
\usepackage{caption}
\usepackage{ gensymb }

\usepackage{mathtools}

\def\a{\mathfrak{a}}

\def\g{\mathfrak{g}}
\def\h{\mathfrak{h}}
\def\k{\mathfrak{k}}

\def\n{\mathfrak{n}}

\def\t{\mathfrak{t}}
\def\u{\mathfrak{u}}

\def\z{\mathfrak{z}}

\def\R{\mathbb{R}}
\def\Q{\mathbb{Q}}
\def\Z{\mathbb{Z}}
\def\N{\mathbb{N}}

\def\e{\operatorname{e}}

\def\ad{\operatorname{ad}}
\def\tr{\operatorname{tr}}

\def\alt{\raise1pt\hbox{$\bigwedge$}}

\def\im{\operatorname{im}}
\def\Id{\operatorname{Id}}
\def\GL{\operatorname{GL}}
\def\Ad{\operatorname{Ad}}

\def\mid{\, \vert \,} 
\def\Span{\operatorname{span}}

\def\diag{\operatorname{diag}}
  
\newcommand\aff{\mathfrak{aff}}

\theoremstyle{plain}
\newtheorem{theorem}{\bf Theorem}[section]
\newtheorem{corollary}[theorem]{\bf Corollary}
\newtheorem{proposition}[theorem]{\bf Proposition}
\newtheorem{lemma}[theorem]{\bf Lemma}

\theoremstyle{definition}
\newtheorem{definition}[theorem]{\bf Definition}
\newtheorem{example}[theorem]{\bf Example}

\theoremstyle{remark}
\newtheorem{remark}[theorem]{\bf Remark}

\usepackage[pageref]{backref}
\renewcommand*{\backrefalt}[4]{%
  \ifcase #1 %
    (Not cited.)%
  \or
    (Cited on page~\hyperlink{page.#2}{#2}.)%
  \else
    (Cited on pages~\hyperlink{page.#2}{#2}.)%
  \fi
}

\newcommand{\cabe}[1]{\textcolor{red}{#1}}

\title{1-Lefschetz contact solvmanifolds}

\author{Adrián Andrada}
\email{adrian.andrada@unc.edu.ar}
\author{Agustín Garrone}
\email{agustin.garrone@unc.edu.ar} 

\date{}
\address{FAMAF, Universidad Nacional de C\'ordoba and CIEM-CONICET, Av. Medina Allende s/n, Ciudad Universitaria, X5000HUA C\'ordoba, Argentina}

\thanks{This work was partially supported by CONICET, SECyT-UNC and ANPCyT (Argentina)}

\subjclass[2020]{53C30, 53C15, 22E60, 22E25, 22E40}
\keywords{}

\begin{document}

\begin{abstract}      
    We study the contact $1$-Lefschetz condition on compact contact solvmanifolds with an invariant contact form, as introduced by B.\ Cappelletti-Montano, A.\ De Nicola and I.\ Yudin. We prove that the $1$-Lefschetz condition on Lie algebras is preserved via $1$-dimensional central extensions by a symplectic cocycle, thereby establishing that a unimodular symplectic Lie algebra $(\mathfrak{h}, \omega)$ is $1$-Lefschetz if and only if its contactization $(\mathfrak{g}, \eta)$ is $1$-Lefschetz. We achieve this equivalence by showing an explicit relation between the relevant cohomology degrees of $\mathfrak{h}$ and $\mathfrak{g}$, and also between the commutators $[\mathfrak{h},\mathfrak{h}]$ and $[\mathfrak{g},\mathfrak{g}]$. By specializing to the nilpotent setting, we prove that $1$-Lefschetz contact nilmanifolds equipped with an invariant contact form are quotients of a Heisenberg group by a lattice, and deduce that there are many examples of compact $K$-contact solvmanifolds not admitting compatible Sasakian structures. Lastly, we construct new examples of completely solvable $1$-Lefschetz solvmanifolds, some having the $2$-Lefschetz property and some failing it.   
\end{abstract}

\maketitle

\section{Introduction}

\indent On a compact symplectic manifold $(N^{2n},\omega)$, whose algebra of differential forms is denoted by $\Omega(N)$, the $s$-Lefschetz condition can be stated as the fact that the \textit{Lefschetz operators} 
$L^k \colon H_{dR}^{n-k}(N) \to H_{dR}^{n+k}(N)$, induced on de Rham cohomology from the linear isomorphisms $\Omega^{n-k}(N) \to \Omega^{n+k}(N)$ given by $\alpha \mapsto \omega^k \wedge \alpha$, are bijective for every $0 \leq k \leq s$, where $0 \leq s \leq n$. The importance of this property lies primarily in the fact that every compact Kähler manifold is $n$-Lefschetz, a feature usually called the \textit{hard-Lefschetz condition}, and moreover it is closely related to other Kähler-like cohomological properties. This is outlined briefly in Section \ref{section: the lefschetz condition}. 

\indent A natural class of manifolds in which to study the Lefschetz condition is given by solvmanifolds, since they have been historically a source of examples and counterexamples in differential geometry. Recall that $N$ is a \textit{solvmanifold} when it can be written as a compact quotient $\Gamma \backslash G$ of some simply connected solvable Lie group $G$ by a discrete co-compact subgroup $\Gamma$ of $G$, called a \textit{lattice}. A brief review of solvmanifolds and some of its properties can be found in Section \ref{section: solvmanifolds}. If the cohomology of $\Gamma \backslash G$ is isomorphic to the invariant one (see Proposition \ref{prop: sometimes it is an isomorphism} for when this may happen), questions concerning the Lefschetz condition can be posed and answered at the infinitesimal level. In this regard, Benson and Gordon have shown that $1$-Lefschetz nilmanifolds (i.e., solvmanifolds with nilpotent $G$) are tori in \cite[proof of Theorem A]{BG1}, irrespective of whether the symplectic form is invariant or not; moreover, they also find a characterization of unimodular symplectic Lie algebras (which are solvable due to \cite[Theorem 11]{Chu}) that are $1$-Lefschetz in \cite[proof of Theorem 2, Remarks in Section 2]{BG2}, thus generalizing their previous work. 

\indent It is then natural to ask for natural extensions of these ideas and results to compact contact manifolds. We review the basics of contact geometry in Section \ref{section: the lefschetz condition}. However, complications quickly arise, since there are no natural candidates for Lefschetz operators to begin with. In this article, we are concerned with the contact Lefschetz condition as first defined in \cite{Cagliari 2}. In there, for a given $0 \leq k \leq n$, the \textit{Lefschetz relation in degree $k$} for a contact manifold $(M^{2n+1}, \eta)$, with Reeb vector field $\xi$, is defined as the following subset of $H_{dR}^k(M) \times H_{dR}^{2n+1-k}(M)$:
\begin{align} \label{eq: lefschetz relation}
    \mathcal{R}_{ \mathrm{Lef}_k } = \{ ( [\beta], [\epsilon_{\eta} L^{n-k} (\beta)] ) \mid \beta \in \Omega^k(M), \; d \beta = 0, \; \iota_{\xi} \beta = 0, \; L^{n-k+1} \beta = 0 \}.
\end{align}
\noindent Here, $\epsilon_{\eta}(\gamma) := \eta \wedge \gamma$ for all $\gamma \in \Omega(M)$. We then say that $(M^{2n+1},\eta)$ is \textit{$s$-Lefschetz} if for all $0 \leq k \leq s$ the relation $\mathcal{R}_{ \mathrm{Lef}_k }$ in \eqref{eq: lefschetz relation} is the graph of an isomorphism $\mathrm{Lef}^{n-k}\colon H^k_{dR}(M) \to H^{2n+1-k}_{dR}(M)$, where $0 \leq s \leq n$. We point out that this definition involves several subtleties worth discussing (see the remarks below Definition \ref{def: contact lefschetz condition}). As expected from the situation in the symplectic case, compact Sasakian manifolds are hard-Lefschetz, as proven in \cite[Theorem 3.6, Theorem 4.5]{Cagliari 2}.

\indent Unlike the symplectic case, the contact Lefschetz condition in the solvmanifold setting has not been studied as intensely. There is an analogue to Benson and Gordon's result for nilmanifolds in the contact setting, proven independently in \cite[Theorem 1.1]{Cagliari 3} and in \cite[Theorem 8.2]{Kasuya3}, stating that the only Sasakian nilmanifolds (not necessarily with invariant Sasakian structure) are quotients of a Heisenberg group. However, there appears to be no result characterizing contact $1$-Lefschetz solvmanifolds, not even in the nilpotent case. One of the goals of this article is to address this problem. 

\indent Our approach is to work primarily at the Lie algebra level, and then pass to compact quotients whenever lattices exist. A well-known necessary condition for this is unimodularity, and thus all of our Lie algebras are assumed to be unimodular. Moreover, we also assume our Lie algebras to have nontrivial center, the reason for this being that all nilpotent Lie algebras are as such, and we aim for a result for a class of solvmanifolds including nilmanifolds. As we recall in Section \ref{section: solvmanifolds and the contact Lefschetz condition.}, contact unimodular Lie algebras $(\g, \eta)$ with nontrivial center are in bijective correspondence  with unimodular symplectic Lie algebras $(\h, \omega)$ via \textit{contactization}, i.e. one-dimensional central extensions by a symplectic cocycle. In particular, both $\h$ and $\g$ turn out to be solvable, due to \cite[Theorem 11]{Chu}. One of our main results is the fact that this correspondence preserves the $1$-Lefschetz condition, a result that can be stated with precision as follows. 
\begin{theorem} \label{thm: main}
    Let $(\g, \eta)$ be a contact unimodular Lie algebra with nontrivial center, thereby arising from contactization of a unimodular symplectic Lie algebra $(\h, \omega)$. Then $(\g,\eta)$ is $1$-Lefschetz if and only if $(\h,\omega)$ is $1$-Lefschetz. 
\end{theorem}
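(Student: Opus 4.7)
The plan is to work entirely at the Chevalley--Eilenberg level, using the contactization data $\g = \h \oplus \R Z$ with $[X,Y]_\g = [X,Y]_\h + \omega(X,Y)\,Z$ and $\eta = Z^*$, so that $d_\g\eta = -\omega$ and $\Lambda^{\bullet} \g^* = \Lambda^{\bullet} \h^* \oplus \eta \wedge \Lambda^{\bullet - 1}\h^*$. Because $d_\g$ preserves the horizontal sub-complex $\Lambda \h^*$, there is a short exact sequence of CE complexes
\begin{equation*}
0 \longrightarrow \Lambda^{\bullet}\h^* \longrightarrow \Lambda^{\bullet}\g^* \longrightarrow \Lambda^{\bullet-1}\h^* \longrightarrow 0,
\end{equation*}
in which the quotient projection $\alpha + \eta\wedge\beta \mapsto \beta$ carries the differential $-d_\h$, and the resulting Gysin-type long exact sequence reads
\begin{equation*}
\cdots \to H^{k-2}(\h) \xrightarrow{L} H^k(\h) \xrightarrow{\iota^*} H^k(\g) \xrightarrow{p} H^{k-1}(\h) \xrightarrow{L} H^{k+1}(\h) \to \cdots,
\end{equation*}
with $L = \omega \wedge \cdot$ the symplectic Lefschetz operator on $\h$ and connecting map equal to $-L$.

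Specializing to the two degrees relevant to the contact $1$-Lefschetz condition yields the short exact sequence $0 \to H^1(\h)\xrightarrow{\iota^*} H^1(\g) \xrightarrow{p} \ker\!\left(L\colon H^0(\h)\to H^2(\h)\right) \to 0$ at $k=1$, and, using $H^{2n+1}(\h)=0$, the short exact sequence $0 \to \operatorname{coker}\!\left(L\colon H^{2n-2}(\h)\to H^{2n}(\h)\right) \to H^{2n}(\g) \xrightarrow{p} H^{2n-1}(\h) \to 0$ at $k=2n$. Unimodularity of $\h$ provides Poincar\'e duality and the self-adjointness of $L$ under the induced pairing, identifying the kernel and cokernel just above as mutually linearly dual; in particular, $\dim H^1(\g) = \dim H^{2n}(\g)$, and both sequences are governed by the single invariant $[\omega]\in H^2(\h)$.

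I would then verify that either 1-Lefschetz hypothesis forces $[\omega]\neq 0$ in $H^2(\h)$. On the $\g$-side, a closed $1$-form $\alpha + c\eta$ admits a horizontal closed representative only when $c = 0$, and this is possible for every class iff $[\omega]\neq 0$; otherwise $\mathcal{R}_{\mathrm{Lef}_1}$ cannot project surjectively onto $H^1(\g)$ and so fails to be the graph of an isomorphism. On the $\h$-side, $[\omega]=0$ would force $[\omega^n]=0$, contradicting the $k=0$ clause of the symplectic 1-Lefschetz condition, i.e.\ the nondegeneracy of $L^n\colon H^0(\h)\to H^{2n}(\h)$. Under the surviving alternative, the two short exact sequences collapse to canonical isomorphisms $\iota^*\colon H^1(\h)\xrightarrow{\sim} H^1(\g)$ and $p\colon H^{2n}(\g)\xrightarrow{\sim} H^{2n-1}(\h)$.

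Finally, a direct computation on a horizontal closed representative $\beta\in\h^*$ gives
\begin{equation*}
p\bigl(\mathrm{Lef}^{n-1}[\beta]\bigr) = p\bigl[\eta \wedge (d\eta)^{n-1}\wedge\beta\bigr] = (-1)^{n-1}[\omega^{n-1}\wedge\beta] = (-1)^{n-1}\,L^{n-1}[\beta],
\end{equation*}
so the contact Lefschetz map $\mathrm{Lef}^{n-1}\colon H^1(\g)\to H^{2n}(\g)$ is intertwined, up to sign, with a single symplectic power $L^{n-1}\colon H^1(\h)\to H^{2n-1}(\h)$; hence one is an isomorphism exactly when the other is. The main obstacle I expect is the bookkeeping around the Lefschetz relation \eqref{eq: lefschetz relation}---verifying that its horizontality and $L^{n-k+1}$-vanishing clauses are automatically satisfied at the Lie algebra level on a $(2n+1)$-dimensional $\g$, and precisely aligning the symplectic "1-Lefschetz" clause of the theorem (in the sense of Benson--Gordon) with the cohomological isomorphism $L^{n-1}\colon H^1(\h)\to H^{2n-1}(\h)$ that the Gysin sequence naturally produces at these degrees.
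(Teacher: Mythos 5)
Your proposal is correct, and its endgame coincides with the paper's: both reduce $\mathrm{Lef}^{n-1}$ to the symplectic operator $L^{n-1}$ by sandwiching it between an isomorphism $H^1(\h)\cong H^1(\g)$ and an isomorphism $H^{2n}(\g)\cong H^{2n-1}(\h)$, and the intertwining computation $p\bigl[\eta\wedge(d\eta)^{n-1}\wedge\beta\bigr]=(-1)^{n-1}L^{n-1}[\beta]$ is exactly the paper's commutative square. Where you genuinely diverge is in how the two isomorphisms are produced. The paper proves the degree-$1$ isomorphism by the same direct argument you sketch ($d_\h\gamma'=a\omega$ forces $a=0$ because $\omega$ is non-exact on a unimodular $\h$), but it obtains the top-degree isomorphism concretely: it first shows $\xi\in[\g,\g]$ (Proposition \ref{prop: xi belongs in the commutator}), identifies $H^{2n}(\g)$ and $H^{2n-1}(\h)$ with $\{\iota_x\Omega_\g\mid x\in\a\}$ and $\{\iota_x\Omega_\h\mid x\in\a\}$ for a common complement $\a$ of the commutators, and checks that $\eta\wedge(-)$ matches these spaces. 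You instead run the Gysin long exact sequence of the central extension, with connecting map $-L=-[\omega]\wedge(\cdot)$, and kill the cokernel of $L\colon H^{2n-2}(\h)\to H^{2n}(\h)$ by Poincar\'e duality and self-adjointness of $L$; this is cleaner, packages both isomorphisms into one mechanism, and would give information in every degree, whereas the paper's explicit route yields the structural byproduct $\n_\g=\R\xi\oplus\n_\h$ that it needs later (Corollary \ref{cor: boring benson-gordon}). The clauses you flag as remaining bookkeeping are indeed easy and are handled in the paper by Corollary \ref{cor: un cuidado para definir el mapa lefschetz contacto}: primitivity of closed $1$-forms is automatic because $\omega^n\wedge\beta$ is a horizontal $(2n+1)$-form on a $2n$-dimensional $\h$, hence zero, and the $k=0$ clause on both sides is just unimodularity.
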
 
    \indent Theorem \ref{thm: main} is a partial analogue of the Benson–Gordon result for unimodular symplectic Lie algebras in \cite[proof of Theorem 2, Remarks in Section 2]{BG2}, in the sense that it characterizes contact unimodular Lie algebras satisfying the $1$-Lefschetz condition in terms of Lie-theoretic data.  

    \indent The proof of Theorem \ref{thm: main} takes up about all Section \ref{section: cohomology remarks} and most of Section \ref{section: the 1-Lefschetz condition for contact Lie algebras arising from contactization}. An analogous result for contact solvmanifolds is also valid, provided that the contact form is invariant and their cohomology can be computed from invariant forms (see Remark \ref{obs: main, i guess}), for example, in the nilpotent and completely solvable cases. We note that Theorem \ref{thm: main} could be deduced following the proof of \cite[Theorem 8.1]{Lin}; however, our proof is different and actually elementary. 
        
    \indent We obtain the following result as a consequence of applying Theorem \ref{thm: main} in the nilpotent setting.
\begin{theorem} \label{thm: main applications}
    \phantom{.}
\begin{enumerate} [\rm (i)]
    \item A nilmanifold endowed with an invariant contact form is $1$-Lefschetz if and only if it is a Heisenberg nilmanifold. 
    \item Any non-Heisenberg nilmanifold endowed with an invariant contact form admits a compatible $K$-contact metric but does not admit a compatible (not necessarily invariant) Sasakian structure.  
\end{enumerate} 
\end{theorem}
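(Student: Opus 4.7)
The plan is to derive Theorem \ref{thm: main applications} from Theorem \ref{thm: main} together with Nomizu's theorem and two classical results: the Benson–Gordon characterization of $1$-Lefschetz nilpotent symplectic Lie algebras, and the Cappelletti-Montano–De Nicola–Yudin / Kasuya theorem on Sasakian nilmanifolds.

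For part (i), one would first invoke Nomizu's theorem to identify $H^{*}_{dR}(N)$ with the Chevalley–Eilenberg cohomology of $\mathfrak{g} = \operatorname{Lie}(G)$. Since $\eta$ is invariant, this isomorphism matches the $1$-Lefschetz condition on $N$ with the $1$-Lefschetz condition on the contact Lie algebra $(\mathfrak{g},\eta)$. Being nilpotent, $\mathfrak{g}$ is unimodular and has nontrivial center, so Theorem \ref{thm: main} translates the problem into the $1$-Lefschetzness of the underlying symplectic Lie algebra $(\mathfrak{h},\omega)$. Since $\mathfrak{h}$ is a quotient of the nilpotent $\mathfrak{g}$ by a central ideal, it is itself nilpotent, and the infinitesimal version of \cite{BG1} then forces $\mathfrak{h}$ to be abelian. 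This is equivalent to $\mathfrak{g}$ being a Heisenberg algebra, yielding the forward implication; the converse is clear, as Heisenberg nilmanifolds are well known to be Sasakian and hence hard-Lefschetz.

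For part (ii), the two assertions should be addressed separately. Existence of a compatible K-contact metric is constructive at the Lie algebra level: choose any almost complex structure $\varphi$ on $\ker\eta$ compatible with $d\eta$, so that $d\eta(\cdot, \varphi \cdot)$ restricts to a positive-definite inner product on $\ker\eta$; extend by $\varphi(\xi) = 0$ and complete to an inner product $g$ on $\mathfrak{g}$ by declaring $\xi$ of unit norm and orthogonal to $\ker\eta$. Centrality of the Reeb vector $\xi$ in $\mathfrak{g}$ forces $\operatorname{ad}_\xi = 0$, so the associated left-invariant metric on $G$ has $\xi$ as a Killing vector, giving a compatible K-contact structure that descends to $N$. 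The exclusion of Sasakian structures then follows directly from the result of \cite{Cagliari 3} and \cite{Kasuya3} cited in the introduction: any Sasakian nilmanifold must be a Heisenberg quotient, so if $N$ is not of this form it admits no Sasakian structure whatsoever.

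The main subtlety I anticipate lies in part (i), in the careful transfer of the $1$-Lefschetz condition from $N$ to $(\mathfrak{g},\eta)$. The relation \eqref{eq: lefschetz relation} requires representatives $\beta$ satisfying the primitivity conditions $\iota_\xi \beta = 0$ and $L^{n-k+1}\beta = 0$ at the form level, and it is not immediate that every cohomology class admits a representative that is simultaneously invariant and primitive. Nomizu's theorem supplies the invariance; matching the primitivity conditions is where attention is needed, and should be handled by the analysis in Section \ref{section: solvmanifolds and the contact Lefschetz condition.}.
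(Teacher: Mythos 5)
Your argument is correct and follows essentially the same route as the paper: part (i) is Nomizu plus Theorem \ref{thm: main} plus Benson--Gordon, i.e.\ the paper's Corollary \ref{cor: 1-lefschetz nilpotent} and Theorem \ref{thm: odd benson-gordon}, with the primitivity transfer you flag being exactly what Corollary \ref{cor: un cuidado para definir el mapa lefschetz contacto} and Remark \ref{obs: main, i guess} supply, and part (ii) is the paper's combination of Remark \ref{obs: rather trivial}, Theorem \ref{thm: sasakian nilmanifolds} and part (i). The only spot to tighten is the converse of (i): since the $1$-Lefschetz property is a property of the pair $(N,\eta)$ rather than of $N$ alone, you should verify that the \emph{given} invariant contact form is Sasakian-compatible (true, because $\ker\eta$ is abelian and hence K\"ahler), or argue as the paper does that the abelian $\h$ is trivially $1$-Lefschetz and apply Theorem \ref{thm: main} once more.
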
 
    \indent These assertions are proven in Section \ref{section: some applications}, and are stated separately in Theorem \ref{thm: odd benson-gordon} and in Corollary \ref{cor: cagliari}, respectively. Notice that Theorem \ref{thm: main applications}(i) can be regarded as the contact analogue of the classical Benson and Gordon's result characterizing $1$-Lefschetz symplectic nilmanifolds, found in \cite[proof of Theorem A]{BG1}, albeit with the further assumption that the contact form under consideration is invariant. It is unclear as of now if the same result holds in the non-invariant case. Theorem \ref{thm: main applications}(ii) helps place the earlier isolated examples of \cite{Cagliari 1} into a broader picture.
    
    \indent In the course of our proof of Theorem \ref{thm: main}, we relate the commutators $\n_{\g} = [\g,\g]_{\g}$ and $\n_{\h} = [\h,\h]_{\h}$ of $\g$ and $\h$, respectively. The precise relation is given in Proposition \ref{prop: xi belongs in the commutator}. If further either $(\g, \eta)$ or $(\h, \omega)$ is $1$-Lefschetz, and so is the other due to Theorem \ref{thm: main}, this relation is enhanced. See Corollary \ref{cor: boring benson-gordon} for details. 
    
    \indent Lastly, in Section \ref{section: lattices} we construct examples of contact $1$-Lefschetz solvmanifolds. This is one of the main contributions of this article, since, to the best of our knowledge, examples of 1-contact manifolds are scarce. Our examples come in three kinds: the first ones (in Section \ref{section: semisimple case}) are also $2$-Lefschetz, and the second and third ones (in Section \ref{section: nonsemisimple case} and Section \ref{section: BG case}, respectively) are never $2$-Lefschetz. We obtain them by showing that the contactization of some of the Lie algebras the authors studied in \cite{AG1} and \cite{AG2}, and also one appearing in \cite[Example 3]{BG2}, admit lattices; moreover, all examples are of completely solvable type, and thus the Lefschetz condition on the corresponding solvmanifolds follows from the properties of their Lie algebras. 

\section{Preliminaries}
\subsection{The Lefschetz condition} \label{section: the lefschetz condition}

    \indent A (co-oriented) \textit{contact manifold} $M^{2n+1}$ is an odd-dimensional smooth manifold endowed with a maximally non-integrable codimension-one distribution $\mathcal{D}$, known as a \textit{contact structure}, given by $\mathcal{D} = \ker \eta$, where $\eta \in \Omega^1(M)$ is a \textit{contact form} on $M^{2n+1}$; that is, a $1$-form such that
\begin{align*}
    \eta \wedge (d\eta)^n \neq 0
\end{align*}
    \noindent everywhere on $M$. We  refer to the pair $(M^{2n+1}, \eta)$ as a \textit{contact manifold}. The non-degeneracy condition means that $\eta \wedge (d\eta)^n$ defines a volume form on $M^{2n+1}$, hence every contact manifold is orientable. Moreover, $(\mathcal{D}, d \eta)$ is a symplectic vector bundle. 

    \indent Every contact manifold $(M^{2n+1}, \eta)$ possesses a distinguished global vector field $\xi \in \mathfrak{X}(M)$, called the \textit{Reeb vector field}, which is determined uniquely from the conditions
\begin{align*}
    \iota_{\xi} d\eta = 0, \qquad \eta(\xi) = 1.
\end{align*} 
    \noindent The Reeb vector field $\xi$ defines the characteristic foliation with one-dimensional leaves, giving rise to a canonical splitting of the tangent bundle $TM$ of $M^{2n+1}$,
\begin{align*}
    TM = \mathcal{D} \oplus \mathcal{L},
\end{align*}
\noindent with $\mathcal{L}$ being the trivial line bundle generated by $\xi$. 

    \indent Let $(N^{2n}, \omega)$ be a symplectic manifold. For each $0 \leq k \leq n$, there are linear bijections,  
\begin{align*}
    L^{n-k}\colon \Omega^k(N) \to \Omega^{2n-k}(N), \quad L^{n-k}(\alpha) = \omega^{n-k} \wedge \alpha,
\end{align*}    
    \noindent inducing corresponding operators in de Rham cohomology, 
\begin{align*}
    L^{n-k}\colon H^k_{dR}(N) \to H^{2n-k}_{dR}(N), \quad L^{n-k}( [\alpha] ) = [\omega^{n-k} \wedge \alpha],
\end{align*}
    \noindent known as \textit{Lefschetz operators}. Both the operators at forms-level and at cohomology-level are referred to by the same name, since there is no risk of confusion. Usually, the Lefschetz operators are not bijective in cohomology, and a standard question in the literature is whether or not they are all bijective for a specific symplectic manifold or family of manifolds. As first noted in \cite{FMU}, keeping track of where this condition fails for the first time is also of interest. 
\begin{definition} \label{def: symplectic lefschetz condition}
    A symplectic manifold $(N^{2n},\omega)$ is said to be \textit{$s$-Lefschetz}, where $0 \leq s \leq n$ is given, if $L^{n-k}$ is bijective for all $0 \leq k \leq s$. 
\end{definition}
    \indent Occasionally, instead of calling a symplectic manifold \textit{$s$-Lefschetz}, we speak of it as \textit{having the $s$-Lefschetz property} or \textit{satisfying the $s$-Lefschetz condition}. Notice that $L^{n-k}$ is always a bijection when $k = n$, so it is only important to consider $s \leq n-1$. As mentioned above, in the particular case when $s = n-1$, $(N^{2n}, \omega)$ is called \textit{hard-Lefschetz}. In \cite{FMU}, a symplectic manifold that is $s$-Lefschetz but not hard-Lefschetz is called \textit{weak-Lefschetz}, but we do not use that terminology in this article.  
    
    \indent It is well known that, when $N$ is compact, the Lefschetz condition on $N$ is closely tied to the existence of a symplectic Hodge theory, a symplectic analogue of the classical Hodge theory in the Riemannian setting. See \cite[Section 2]{AG2} for a quick review of these facts, \cite{Tseng-Yau 1} and \cite{Tseng-Yau 2} for further development of that connection, and \cite[Section 2]{FMU} for the corresponding extension to the weak case. It is a classical result that every compact Kähler manifold satisfies the hard-Lefschetz condition. Historically, this fact motivated the introduction of the Lefschetz condition in symplectic geometry, as a way to capture when a manifold is “cohomologically Kähler’’ in a purely symplectic sense.

    \indent There are several analogues of the Lefschetz condition in the contact setting. In \cite{EKA}, a variant using $\xi$-basic cohomology instead of the de Rham one is proposed. In this article, we are concerned only with the one put forward in \cite{Cagliari 2}. In there, for a given $0 \leq k \leq n$, the \textit{Lefschetz relation in degree $k$} for a contact manifold $(M^{2n+1}, \eta)$ is defined as the following subset of $H_{dR}^k(M) \times H_{dR}^{2n+1-k}(M)$:
\begin{align*}
    \mathcal{R}_{ \mathrm{Lef}_k } = \{ ( [\beta], [\epsilon_{\eta} L^{n-k} (\beta)] ) \mid \beta \in \Omega^k(M), \; d \beta = 0, \; \iota_{\xi} \beta = 0, \; L^{n-k+1} \beta = 0 \}.
\end{align*}
\noindent Here, $\epsilon_{\eta}(\gamma) := \eta \wedge \gamma$ for all $\gamma \in \Omega(M)$. Forms $\beta \in \Omega^k(M)$ satisfying $i_{\xi} \beta = 0$ are called \textit{$\xi$-horizontal}, and forms satisfying $L^{n-k+1} \beta = 0$ are called \textit{primitive}. Notice that, since we are dealing with closed forms $\beta$, the fact that they are $\xi$-horizontal implies that they are also $\xi$-basic, meaning that they are also in the kernel of the Lie derivative $\mathcal{L}_{\xi}$ with respect to $\xi$.  

\begin{definition} \label{def: contact lefschetz condition}
    A contact manifold $(M^{2n+1},\eta)$ is said to be \textit{$s$-Lefschetz}, where $0 \leq s \leq n$ is given, if for all $0 \leq k \leq s$ the relation $\mathcal{R}_{ \mathrm{Lef}_k }$ in equation \eqref{eq: lefschetz relation} is the graph of an isomorphism $\mathrm{Lef}^{n-k}\colon H^k_{dR}(M) \to H^{2n+1-k}_{dR}(M)$. 
\end{definition}

    \indent When confusion might arise between Definitions \ref{def: symplectic lefschetz condition} and \ref{def: contact lefschetz condition}, we speak about the \textit{symplectic Lefschetz condition} and the \textit{contact Lefschetz condition}, respectively. 

    \indent Definition \ref{def: contact lefschetz condition} involves several subtleties worth discussing. Notice that it does \underline{not} assert that the linear operators $\mathrm{Lef}^{n-k}( [\beta] ) := [\epsilon_{\eta} L^{n-k}(\beta)]$, defined on the subset of $H^k_{dR}(M)$ of cohomology classes $[\beta]$ admitting $\xi$-horizontal and primitive representatives $\beta$, are bijective. In fact this is half the picture: it also requires that said $\mathrm{Lef}^{n-k}$ has domain \textit{exactly} $H^k_{dR}(M)$, meaning that \textit{all} cohomology classes in $H^k_{dR}(M)$ have $\xi$-horizontal and primitive representatives. And, of course, that this holds for all $0 \leq k \leq s$. 
    
    \indent Just as contact geometry serves as an odd-dimensional counterpart of symplectic geometry, \textit{metric contact structures} are odd-dimensional analogues of almost Hermitian geometry. It is well known that any contact manifold $(M^{2n+1}, \eta)$ has a Riemannian metric $g$ and a $(1,1)$-tensor field $\Phi$ subject to the following relations:
\begin{align*}
    \eta = \iota_{\xi} g, \quad d \eta = 2 g(\cdot, \Phi \, \cdot), \quad \Phi^2 = - \Id + \eta \otimes \xi.
\end{align*}
    \noindent Here, $\Id:TM \to TM$ is the identity mapping. All these imply at once that $\Phi \xi = 0$ and $\eta \circ \Phi = 0$, as well as the compatibility condition
\begin{align*}
    g(\Phi X, \Phi Y) = g(X,Y) - \eta(X) \eta(Y) \text{ for all $X$, $Y \in \mathfrak{X}(M)$}. 
\end{align*}
    \noindent We refer to \cite[Section 4]{Blair} for details. A triple $(\eta, g, \Phi)$ satisfying all of the above is called a \textit{metric contact structure}.  

    \indent There are also analogues of almost-Kähler and Kähler structures in this metric contact setting, also relevant to our purposes: they are \textit{$K$-contact} and \textit{Sasakian} manifolds, respectively. To define them, consider a metric contact structure $(\eta, g, \Phi)$ on $M$. Denote by $\nabla$ the Levi-Civita connection associated to the metric $g$ on $M$; also, let $N_{\Phi}$ be the Nijenhuis tensor associated to $\Phi$, which is defined as
\begin{align*}
    N_{\Phi}(X,Y) = \Phi^2[X,Y] +[\Phi X, \Phi Y] - \Phi[\Phi X, Y] - \Phi[X, \Phi Y] \text{ for all $X$, $Y \in \mathfrak{X}(M)$}.
\end{align*}     
\begin{proposition} \label{prop: K-contact equivalences}
    The following conditions are equivalent:
\begin{enumerate} [\rm (i)]
    \item $\mathcal{L}_{\xi} g = 0$. 
    \item $\mathcal{L}_{\xi} \Phi = 0$.  
    \item $\Phi X = - \nabla_X \xi$ for all $X \in \mathfrak{X}(M)$. 
\end{enumerate}    
\end{proposition}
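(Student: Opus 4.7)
The approach is to reduce all three conditions to linear-algebraic statements about the $(1,1)$-tensor $A\colon TM \to TM$ defined by $AX := -\nabla_X \xi$. Two preliminary facts are needed: $\mathcal{L}_\xi \eta = \iota_\xi d\eta + d(\eta(\xi)) = 0$ (from Cartan's formula and $\iota_\xi d\eta = 0$), and $\nabla_\xi \xi = 0$ (from $\iota_\xi d\eta = 0$ together with $g(\nabla_X \xi, \xi) = 0$ for every $X \in \mathfrak{X}(M)$, obtained by differentiating $g(\xi,\xi) = \eta(\xi) = 1$). In particular, $A\xi = 0$.

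Using that $\nabla$ is torsion-free and metric-compatible, direct computation gives
\begin{align*}
\mathcal{L}_\xi g(X,Y) &= g(\nabla_X \xi, Y) + g(\nabla_Y \xi, X), \\
d\eta(X,Y) &= g(\nabla_X \xi, Y) - g(\nabla_Y \xi, X).
\end{align*}
Thus (i) is equivalent to $A$ being $g$-skew-symmetric. Since $d\eta$ is antisymmetric and $d\eta(X,Y) = 2g(X, \Phi Y)$, the operator $\Phi$ is itself $g$-skew-symmetric; comparison of the two displays identifies the $g$-skew part of $A$ with $\Phi$. Writing $A = \Phi + S$ with $S$ the $g$-symmetric part, condition (i) becomes $S = 0$, which is precisely (iii).

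For (i) $\Rightarrow$ (ii), I would note that $\mathcal{L}_\xi g = 0$ and $\mathcal{L}_\xi d\eta = d(\mathcal{L}_\xi \eta) = 0$ together imply $\mathcal{L}_\xi \Phi = 0$: since $\Phi$ is uniquely determined on the horizontal distribution $\mathcal{D} = \ker \eta$ by $g(X, \Phi Y) = \tfrac{1}{2} d\eta(X,Y)$ and by $\Phi\xi = 0$, applying $\mathcal{L}_\xi$ to this defining relation and invoking the Leibniz rule yields $g(X, (\mathcal{L}_\xi \Phi) Y) = 0$ for all $X \in \mathcal{D}$, whence $\mathcal{L}_\xi \Phi = 0$ on $\mathcal{D}$; the vertical part is automatic. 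The converse (ii) $\Rightarrow$ (i) is the more delicate direction: I would follow the standard contact-metric strategy of introducing $h := \tfrac{1}{2}\mathcal{L}_\xi \Phi$ and establishing the structural identity $\nabla_X \xi = -\Phi X - \Phi h X$ (cf.\ \cite[Lemma 6.2]{Blair}), i.e., $S = \Phi h$. Since $\Phi$ restricts to an automorphism of $\mathcal{D}$ and both $S$ and $h$ annihilate $\xi$, this produces the equivalence $\mathcal{L}_\xi \Phi = 0 \iff h = 0 \iff S = 0 \iff \mathcal{L}_\xi g = 0$, closing the triangle.

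The main obstacle is the identity $S = \Phi h$. Its proof combines the bracket expansion
$$(\mathcal{L}_\xi \Phi)(X) = [\xi, \Phi X] - \Phi[\xi, X] = (\nabla_\xi \Phi)(X) + [S, \Phi](X)$$
with further relations obtained by differentiating $\Phi^2 = -\Id + \eta \otimes \xi$ and the compatibility $g(\Phi X, \Phi Y) = g(X,Y) - \eta(X)\eta(Y)$ along $\xi$ (which in particular force $h$ to be $g$-symmetric and $\Phi$-anticommuting). The bookkeeping of signs and of horizontal vs.\ vertical components is delicate, but no new ideas beyond standard metric contact computations are involved.
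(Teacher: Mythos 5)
Your argument is correct, but note that the paper does not actually prove this proposition: its ``proof'' is the citation ``See \cite[Theorem 6.2]{Blair} and \cite[Lemma 6.2]{Blair}'', so the only real comparison is with Blair's argument, whose structure yours reproduces. The reduction of (i) $\Leftrightarrow$ (iii) to the decomposition $A=-\nabla\xi=\Phi+S$ — with the identities $\mathcal{L}_\xi g(X,Y)=g(\nabla_X\xi,Y)+g(\nabla_Y\xi,X)$ and $d\eta(X,Y)=g(\nabla_X\xi,Y)-g(\nabla_Y\xi,X)$ forcing the $g$-skew part of $A$ to be $\Phi$ — is clean and complete, as is (i) $\Rightarrow$ (ii) via the Leibniz rule applied to $d\eta=2g(\cdot,\Phi\,\cdot)$ (in fact you obtain $g(X,(\mathcal{L}_\xi\Phi)Y)=0$ for \emph{all} $X$ by nondegeneracy of $g$, so no separate treatment of the vertical part is needed). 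The only step you do not carry out is the identity $\nabla_X\xi=-\Phi X-\Phi hX$ with $h=\tfrac12\mathcal{L}_\xi\Phi$, which you correctly isolate as the crux of (ii) $\Rightarrow$ (i); that identity is precisely \cite[Lemma 6.2]{Blair}, i.e.\ the same external input the paper itself cites, so deferring it is consistent with the paper's level of detail. For full self-containment you would need to prove it (it requires the expression for $\nabla\Phi$ on a contact metric manifold, together with the facts — which you note — that $h$ is $g$-symmetric, anticommutes with $\Phi$, and annihilates $\xi$, so that $\Phi h=0$ indeed forces $h=0$). The preliminary facts $\mathcal{L}_\xi\eta=0$ and $\nabla_\xi\xi=0$ are also verified correctly.
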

\begin{proof}
    See \cite[Theorem 6.2]{Blair} and \cite[Lemma 6.2]{Blair}. 
\end{proof}
    \indent A contact metric structure $(\eta, g, \Phi)$ on $M$ is said to be \textit{$K$-contact} if any (equivalently, all) of the conditions in Proposition \ref{prop: K-contact equivalences} hold. A contact manifold $(M,\eta)$ is \textit{$K$-contact} if it admits a compatible $K$-contact metric structure. 
\begin{proposition} \label{prop: Sasaki equivalences}
    The following conditions are equivalent:
\begin{enumerate} [\rm (i)]
    \item $N_{\Phi}(X,Y) = - d \eta(X,Y) \xi$ for all $X$, $Y \in \mathfrak{X}(M)$.  
    \item $(\nabla_X \Phi)Y = g(X,Y) \xi - \eta(Y) X$ for all $X$, $Y \in \mathfrak{X}(M)$.   
\end{enumerate}    
\end{proposition}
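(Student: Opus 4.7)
The plan is to follow the standard treatment of \cite[Chapter 6]{Blair}, reducing both directions to the K-contact setting guaranteed by Proposition \ref{prop: K-contact equivalences} and then matching tensorial identities. As a preliminary step, I would show that each of (i) and (ii) already forces the structure to be K-contact. For (ii), setting $Y = \xi$ and using $\Phi \xi = 0$ gives $-\Phi(\nabla_X \xi) = \eta(X) \xi - X$; applying $\Phi$ once more, invoking $\Phi^2 = -\Id + \eta \otimes \xi$ and noting $\eta(\nabla_X \xi) = 0$ (which follows from differentiating $g(\xi,\xi) = 1$), one recovers condition (iii) of Proposition \ref{prop: K-contact equivalences}. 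For (i), evaluating $N_\Phi(X, \xi)$ together with $d\eta(X,\xi) = 0$ rearranges to $\mathcal{L}_\xi \Phi = 0$, which is condition (ii) there.

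For the implication (ii) $\Rightarrow$ (i), I would substitute (ii) directly into the defining formula of $N_\Phi$. Since the Levi-Civita connection is torsion-free, every Lie bracket rewrites as $\nabla_X Y - \nabla_Y X$, so the four summands $\Phi^2[X,Y]$, $[\Phi X, \Phi Y]$, $\Phi[\Phi X, Y]$ and $\Phi[X, \Phi Y]$ become combinations of $\nabla \Phi$ terms. Applying (ii) to each, expanding, and exploiting both $2 g(X, \Phi Y) = d\eta(X,Y)$ and the K-contact identity $\Phi X = -\nabla_X \xi$ already established, the bulk of the terms cancel pairwise and the residue is exactly $-d\eta(X,Y) \xi$.

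The converse (i) $\Rightarrow$ (ii) is the substantial direction, since one cannot algebraically invert $N_\Phi$ to recover $\nabla \Phi$. The key tool is a universal identity, valid on any almost contact metric manifold, expressing $2 g\bigl((\nabla_X \Phi) Y, Z\bigr)$ as a linear combination of the terms $d\Omega(X, \Phi Y, \Phi Z)$, $d\Omega(X, Y, Z)$, $g(N_\Phi(Y, Z), \Phi X)$, $d\eta(\Phi Y, X) \eta(Z)$ and $d\eta(\Phi Z, X) \eta(Y)$, where $\Omega(Y, Z) := g(Y, \Phi Z)$. Its derivation, via polarization of $d\Omega$ through the Koszul formula and the collection of symmetries, constitutes the main technical obstacle and would be cited from Blair. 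In our contact metric setting $\Omega = \tfrac{1}{2} d\eta$, so $d\Omega = 0$ identically; the term $g(N_\Phi(Y,Z), \Phi X)$ vanishes under hypothesis (i) because $N_\Phi(Y,Z) \in \Span(\xi)$ while $\eta \circ \Phi = 0$; and the remaining boundary terms simplify using $d\eta(\Phi Y, X) = 2 g(\Phi Y, \Phi X) = 2\bigl(g(X,Y) - \eta(X)\eta(Y)\bigr)$ to an expression matching $2 g\bigl(g(X,Y)\xi - \eta(Y) X, Z\bigr)$. Non-degeneracy of $g$ then yields (ii). The principal obstacles throughout are the careful bookkeeping of sign conventions and the derivation of the universal identity, which packages all the algebraic information about $\nabla \Phi$ in almost contact metric geometry.
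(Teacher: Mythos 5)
Your proposal is correct and follows exactly the route the paper takes: the paper's entire proof is the citation to \cite[Theorem 6.3]{Blair}, and your argument is a faithful reconstruction of Blair's proof --- the direct substitution of (ii) into $N_{\Phi}$ for (ii) $\Rightarrow$ (i), and the universal identity for $2g\bigl((\nabla_X \Phi)Y, Z\bigr)$ (Blair's Lemma 6.1) specialized to the contact metric case where $d\Omega = 0$ for (i) $\Rightarrow$ (ii). The only caveats are convention-dependent factors of $2$ (the paper normalizes $d\eta = 2g(\cdot,\Phi\,\cdot)$) and the fact that Blair's identity also carries an $N^{(2)}$-term, which vanishes automatically for contact metric structures; both are bookkeeping issues you already flag.
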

\begin{proof} 
    See \cite[Theorem 6.3]{Blair}.
\end{proof}
    \indent A contact metric structure $(\eta, g, \Phi)$ on $M$ is said to be \textit{Sasakian} if any (equivalently, all) of the conditions in Proposition \ref{prop: Sasaki equivalences} hold. A contact manifold $(M,\eta)$ is \textit{Sasakian} if it admits a compatible Sasakian metric structure. Notably, all Sasakian manifolds are $K$-contact (see \cite[Corollary 6.3]{Blair}). While the converse is true in dimension 3, it is not true in general (see \cite[Chapter 6, Section 7]{Blair} for examples, although we discuss some examples below and in other sections). 

    \indent Just as compact Kähler manifolds, compact Sasakian manifolds exhibit the hard-Lefschetz property. 
\begin{theorem} \cite[Theorem 3.6, Theorem 4.5]{Cagliari 2} \label{thm: compact sasakian are hard-Lefschetz}
    Compact Sasakian manifolds are hard-Lefschetz.
\end{theorem}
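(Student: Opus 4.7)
The plan is to reduce the contact hard Lefschetz property to the hard Lefschetz theorem for the transverse Kähler structure of the Reeb foliation of $(M^{2n+1}, \eta)$. Every Sasakian manifold is $K$-contact, so by Proposition \ref{prop: K-contact equivalences}(i) the Reeb vector field $\xi$ is Killing for the Sasakian metric $g$; in particular the Reeb flow acts by isometries, and each class $[\beta] \in H^k_{dR}(M)$ contains a $\xi$-invariant representative (e.g.\ the Sasakian-harmonic one, which is automatically $\xi$-invariant on a compact manifold). Any $\xi$-invariant form decomposes uniquely as $\beta = \alpha_0 + \eta \wedge \alpha_1$ with $\iota_\xi \alpha_i = 0$ and $\mathcal{L}_\xi \alpha_i = 0$, so the $\alpha_i$ lie in the basic complex $(\Omega^*_B(M), d_B)$ of the Reeb foliation. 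Applying $d$ to this decomposition produces the identities $d_B \alpha_1 = 0$ and $d_B \alpha_0 = -d\eta \wedge \alpha_1$, so the relevant cohomology is controlled by the interplay between $d_B$ and multiplication by $d\eta$ on basic forms.

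Next I would invoke the Kacimi-Alaoui basic Hodge theory for Riemannian foliations: the Reeb foliation is Riemannian, the transverse almost complex structure induced by $\Phi|_{\mathcal{D}}$ is integrable thanks to the Sasakian identity in Proposition \ref{prop: Sasaki equivalences}(i), and $d\eta$ descends to a transverse Kähler form. Consequently $H^*_B(M)$ is finite-dimensional, admits a transverse Hodge decomposition with the standard bidegree refinement and primitive (transverse Lefschetz) decomposition, and satisfies transverse hard Lefschetz, i.e.\ the maps
\begin{equation*}
    L_B^{n-k} \colon H^k_B(M) \to H^{2n-k}_B(M), \qquad [\sigma] \mapsto [(d\eta)^{n-k} \wedge \sigma],
\end{equation*}
are bijective for every $0 \leq k \leq n$.

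The bulk of the argument, and the step I expect to be the main obstacle, is upgrading the transverse statement into the contact one by exhibiting in each class a representative that is simultaneously $\xi$-horizontal \textbf{and} primitive in the sense $L^{n-k+1}\beta = 0$. Given a $\xi$-invariant closed $\beta = \alpha_0 + \eta \wedge \alpha_1$ of degree $k \leq n$, the relation $d\eta \wedge \alpha_1 = -d_B \alpha_0$ shows that $[\alpha_1]_B \in H^{k-1}_B(M)$ lies in the kernel of $L_B \colon H^{k-1}_B(M) \to H^{k+1}_B(M)$; by transverse hard Lefschetz one can write $\alpha_1 = d_B \sigma$ up to a controlled primitive correction, and check that $\beta - d(\eta \wedge \sigma)$ is $\xi$-horizontal. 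Applying the transverse primitive decomposition to the resulting basic $k$-form then isolates a primitive component and yields a $\xi$-horizontal primitive representative of $[\beta]$. The difficulty is that $\xi$-horizontality is a foliation-theoretic condition, while primitivity is a transverse Kähler condition, and reconciling them uses the full Sasakian compatibility between $\eta$, $d\eta$, $\Phi$ and $g$.

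Finally, the assignment $\mathrm{Lef}^{n-k}([\beta]) = [\eta \wedge (d\eta)^{n-k} \wedge \beta]$ is well defined once such representatives exist: two horizontal primitive representatives of the same class differ by an exact basic form, whose image under $\eta \wedge (d\eta)^{n-k} \wedge \cdot$ is exact on $M$. Bijectivity of $\mathrm{Lef}^{n-k}$ then follows from a Gysin-type long exact sequence relating $H^*_{dR}(M)$ with $H^*_B(M)$,
\begin{equation*}
    \cdots \to H^{k-1}_B(M) \xrightarrow{L_B} H^{k+1}_B(M) \to H^{k+1}_{dR}(M) \to H^k_B(M) \xrightarrow{L_B} H^{k+2}_B(M) \to \cdots,
\end{equation*}
combined with the transverse hard Lefschetz isomorphism, which force the claimed bijection $H^k_{dR}(M) \to H^{2n+1-k}_{dR}(M)$ for every $0 \leq k \leq n$.
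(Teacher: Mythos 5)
The paper does not actually prove this statement: it is imported verbatim from Cappelletti-Montano--De Nicola--Yudin, and the only indication of a proof given here is the sentence following the theorem, which says the cited authors show that $\alpha \mapsto \eta \wedge (d\eta)^{n-k}\wedge \alpha$ restricts to an isomorphism between the spaces of metrically harmonic forms $\Omega^k_\Delta$ and $\Omega^{2n+1-k}_\Delta$, and that the resulting isomorphism is independent of the compatible Sasakian metric. Your route --- invariant representatives, the splitting $\beta = \alpha_0 + \eta\wedge\alpha_1$ into basic pieces, El Kacimi-Alaoui's transverse hard Lefschetz theorem for the transverse K\"ahler structure, and a Gysin sequence relating $H^*_B(M)$ to $H^*_{dR}(M)$ --- is a legitimate alternative packaging, and in fact closer to the engine of the original proof than the paper's summary suggests (the cited source also reduces to basic Hodge theory, but organizes everything through the identification of de Rham harmonic $k$-forms, $k\le n$, with primitive basic-harmonic forms rather than through a Gysin sequence). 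What your approach buys is that it stays cohomological and avoids comparing two Laplacians; what the harmonic-form approach buys is that metric-independence and the graph property come out in one stroke.

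Two points deserve attention. First, the step you flag as the main obstacle closes more easily than you suggest: once $[\alpha_1]_B$ is killed by injectivity of $L_B$ on $H^{k-1}_B(M)$ for $k\le n$ and $\beta$ is replaced by a closed basic representative $\gamma$, take the basic-harmonic representative and its transverse Lefschetz decomposition $\gamma' = \sum_{j\ge 0} L_B^j\gamma_j$ with $\gamma_j$ primitive basic closed; for $j\ge 1$ one has $L_B^j\gamma_j = d\bigl(\eta\wedge(d\eta)^{j-1}\wedge\gamma_j\bigr)$, so every non-primitive summand is de Rham exact and $[\beta]$ has the horizontal primitive representative $\gamma_0$. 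Second, the genuinely delicate point is the one you dispatch in one line at the end: showing that $\mathcal{R}_{\mathrm{Lef}_k}$ is the \emph{graph} of a map requires proving that two horizontal primitive representatives of the same de Rham class have $\epsilon_\eta L^{n-k}$-images in the same class, and "they differ by a $d_B$-exact basic form" needs the transverse Hodge decomposition (primitive harmonic forms meet $\operatorname{im} L_B$ trivially) and is still not quite the end, since $\eta\wedge(d\eta)^{n-k}\wedge d_B\tau$ differs from an exact form by $(d\eta)^{n-k+1}\wedge\tau$, which must itself be handled. This is precisely the content of Theorem 4.5 of the cited paper and should not be treated as a formality; with that step written out, your argument is complete.
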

    \indent Theorem \ref{thm: compact sasakian are hard-Lefschetz} is established proving that the operators $\alpha \mapsto \eta \wedge (d \eta)^{n-k} \wedge \alpha$ define isomorphisms between the spaces of (metrically) harmonic forms $\Omega_{\Delta}^k$ and $\Omega_{\Delta}^{2n+1-k}$, and thus between $H^k_{dR}(M)$ and $H^{2n+1-k}_{dR}(M)$, for all $0 \leq k \leq n$. Crucially, it is also established that such isomorphisms are independent of the choice of a compatible Sasakian metric, and thus are bona fide contact invariants. It is in this context that the rather technical Definition \ref{def: contact lefschetz condition} arises, and with it some cohomological obstructions for a contact manifold to admit compatible Sasakian structures: For example, similarly to the symplectic case, the $k$-th Betti number of a contact Lefschetz compact manifold $(M^{2n+1}, \eta)$ is an even number if either $k$ is odd and $k \leq n$ or $k$ is even and $k \geq n+1$ (see \cite[Theorem 5.2]{Cagliari 2}). For compact Sasakian manifolds, this result was known through other methods (see \cite[Theorem 4.4]{Fujitani}). 
    
    \indent In \cite[Theorem 6.3]{Lin}, it is shown that the $s$-Lefschetz condition for $K$-contact manifolds is equivalent to a similar property for operators on basic cohomology (described in \cite{EKA}); in the same article, the relation between the Lefschetz condition and odd-dimensional analogues of Hodge theory is studied.  
    
    \indent No result for $K$-contact compact manifolds analogous to Theorem \ref{thm: compact sasakian are hard-Lefschetz} can exist, as counterexamples are known. A noteworthy family of examples of this kind is provided in \cite{Cagliari 4}: they are $5$-dimensional, $K$-contact, hard-Lefschetz, formal in the sense of Sullivan, and of Tievsky type (a condition on the minimal model known to be possessed by all Sasakian manifolds), and they arise as $S^1$-bundles over a $4$-dimensional compact symplectic solvmanifold of completely solvable type. See, in particular, \cite[Theorem 4.1]{Cagliari 4}. We review solvmanifolds in the next two sections. 

\subsection{Solvmanifolds} \label{section: solvmanifolds}

\indent Throughout the article, let $G$ denote a connected real Lie group with Lie algebra $\g$. 
\begin{definition}
    A \textit{solvmanifold} is a compact quotient $\Gamma \backslash G$, where $G$ is simply connected and solvable, and $\Gamma$ is a discrete subgroup of $G$. Such a co-compact discrete subgroup $\Gamma$ is called a \textit{lattice} of $G$. If $G$ is nilpotent then $\Gamma \backslash G$ is called a \textit{nilmanifold}. 
\end{definition}
    \indent Special classes of solvmanifolds relevant in what follows, apart from nilmanifolds, are those arising from completely solvable Lie groups $G$; we call them  \textit{completely solvable solvmanifolds}. Recall that a connected solvable Lie group $G$ is \textit{completely solvable} if all the adjoint operators $\ad_x: \g \to \g$, with $x\in \g$, have only real eigenvalues.

    \indent As every connected and simply connected solvable Lie group is diffeomorphic to $\R^n$, the usual argument involving the long exact sequence associated to a fibration implies that every solvmanifold $\Gamma \backslash G$ is aspherical, meaning that $\pi_n(\Gamma\backslash G) = 0$ for all $n > 1$, as well as $\pi_1(\Gamma \backslash G) = \Gamma$. Moreover, a classical result due to Mostow shows that solvmanifolds are \textit{rigid}, meaning that they are determined up to diffeomorphism by their fundamental groups (see \cite[Theorem A]{Mostow 1}).

    \indent Solvmanifolds are prominent in the study of geometric structures, as they are usually sources of interesting examples and counterexamples of various kinds. Part of their usefulness comes from the fact that they are algebraically well-behaved. For instance, knowledge of the cohomology of the Lie algebra $\g$ associated to a solvmanifold $\Gamma \backslash G$ gives considerable information about the de Rham cohomology of $\Gamma \backslash G$. 
\begin{proposition} \cite[Theorem 8.1]{Mostow 2} \label{prop: there is always an injection}
    There is an injection $H^*(\g) \hookrightarrow H^*_{dR}(\Gamma\backslash G)$ induced in cohomology by the natural inclusion $\alt^* \g^* \hookrightarrow \Omega^*(\Gamma\backslash G)$. 
\end{proposition}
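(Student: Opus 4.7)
The plan is to construct a chain-level retraction $A \colon \Omega^*(\Gamma \backslash G) \to \alt^* \g^*$ of the natural inclusion $\iota$, from which the injectivity of the induced map on cohomology will be a formal consequence. First, I would observe that $\iota$ is itself a chain map: every left-invariant form on $G$ is in particular invariant under left multiplication by elements of $\Gamma$, so it descends to $\Gamma \backslash G$, and on left-invariant forms the de Rham differential reduces to the Chevalley-Eilenberg differential $d_{CE}$ via Koszul's formula, the directional-derivative terms vanishing on constant-coefficient data.

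To construct $A$, I would use that $G$ is necessarily unimodular whenever $\Gamma \backslash G$ is compact, so $G$ admits a bi-invariant Haar measure which pushes forward to a $G$-invariant probability measure $\mu$ on $\Gamma \backslash G$. For each $X \in \g$, the left-invariant vector field $X^L$ is $\Gamma$-invariant and hence descends to $\Gamma \backslash G$, where its flow is right multiplication by $\exp(tX)$ and in particular preserves $\mu$. I then define
\[
    A(\alpha)(X_1, \ldots, X_k) \;=\; \int_{\Gamma \backslash G} \alpha\bigl(X_1^L, \ldots, X_k^L\bigr) \, d\mu, \qquad \alpha \in \Omega^k(\Gamma \backslash G).
\]
Since $\iota(\beta)(X_1^L, \ldots, X_k^L)$ is the constant function $\beta(X_1, \ldots, X_k)$ for $\beta \in \alt^k \g^*$, and $\mu$ has unit mass, one has $A \circ \iota = \Id$ tautologically.

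The main technical step, and where I expect most care to be needed, is to verify $A \circ d = d_{CE} \circ A$. Applying the Koszul formula to $d\alpha(X_0^L, \ldots, X_k^L)$ and using $[X_i^L, X_j^L] = [X_i, X_j]^L$, the bracket terms integrate to $(d_{CE} A(\alpha))(X_0, \ldots, X_k)$ on the nose. The derivative terms take the form $\int_{\Gamma \backslash G} X_i^L(f) \, d\mu$ for $f$ a smooth function on the quotient; since $X_i^L$ preserves $\mu$, the integrand equals $d(\iota_{X_i^L}(f\mu))$ and vanishes by Stokes' theorem on the closed manifold $\Gamma \backslash G$. With $A$ established as a chain map satisfying $A \circ \iota = \Id$, passing to cohomology yields $A_* \circ \iota_* = \Id$ on $H^*(\g)$, so $\iota_*$ is injective.
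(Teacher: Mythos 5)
Your argument is correct. Note, however, that the paper does not prove this statement at all: it is quoted as a classical result of Mostow, with the reader referred to \cite[Theorem 7.26 and Remark 7.30]{Raghunathan} for a proof. Your averaging construction is precisely the standard proof given in those references, so there is nothing to object to mathematically. The two points where care is genuinely required are both handled: (a) the existence of the retraction $A$ hinges on $G$ being unimodular, so that the left Haar measure, which descends to $\Gamma\backslash G$ because the quotient is by the \emph{left} $\Gamma$-action, is also right-invariant and hence preserved by the flows of the descended left-invariant fields $X^L$ (right translations by $\exp(tX)$); unimodularity is exactly the content of Proposition \ref{prop: milnor}, which is an independent classical fact and creates no circularity; and (b) the vanishing of the derivative terms in the Koszul formula, which you correctly reduce via $\mathcal{L}_{X^L}\mu = 0$ and Cartan's formula to an exact top-degree form integrating to zero on the closed manifold $\Gamma\backslash G$. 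One could quibble that Mostow's Theorem 8.1 is a more general statement about solvmanifold cohomology, but for the injection as stated here your self-contained argument is complete and is the proof the cited sources give.
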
  
    \indent It is remarkable that Proposition \ref{prop: there is always an injection} holds in full generality, not needing to impose further conditions on either $G$ or $\Gamma$. In addition to the original article of Mostow \cite{Mostow 2}, the reader can find a nice proof of Proposition \ref{prop: there is always an injection} in \cite[Theorem 7.26 and Remark 7.30]{Raghunathan}. As an easy consequence, the first Betti number $b_1$ of a solvmanifold $\Gamma \backslash G$ is strictly greater than zero, since 
\begin{align*}
    b_1 = \dim H^1(\Gamma \backslash G) \geq \dim H^1(\g) = \dim \g/[\g,\g] \geq 1.
\end{align*} 
    \noindent Here, we are using that $\g \neq [\g,\g]$ for solvable $\g$ and that $H^1(\g) \cong \left( \g/[\g,\g] \right)^*$ to derive $b_1 \geq 1$.
    
    \indent In many common situations, the injection in Proposition \ref{prop: there is always an injection} is in fact bijective. 
\begin{proposition} \label{prop: sometimes it is an isomorphism}
    The natural inclusion $\alt^* \g^* \hookrightarrow \Omega^*(\Gamma\backslash G)$ induces an isomorphism in cohomology in either any of the following cases:
\begin{enumerate} [\rm (i)]
    \item $G$ is a nilpotent Lie group.
    \item $G$ is a completely solvable Lie group.
    \item $\Ad_G(G)$ and $\Ad_G(\Gamma)$ have the same Zariski-closures in $\GL(\g)$. 
\end{enumerate}
\end{proposition}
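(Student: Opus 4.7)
The plan is to address cases (i)--(iii) separately, following the classical approach due to Nomizu, Hattori, and Mostow, respectively. In all three cases one identifies $\bigwedge^* \g^*$ with the subcomplex of left-invariant forms inside $\Omega^*(\Gamma \backslash G)$: left-invariant forms on $G$ descend to $\Gamma \backslash G$ because they are automatically right-$\Gamma$-invariant. Injectivity of the induced map $H^*(\g) \to H^*_{dR}(\Gamma \backslash G)$ is granted by Proposition \ref{prop: there is always an injection}, so the real work is to prove surjectivity. My overarching strategy is to induct on $\dim \g$ by producing an appropriate ideal $\h \subset \g$ and comparing the Hochschild--Serre spectral sequence of the pair $(\g, \h)$ against the Leray--Serre spectral sequence of the induced fibration of solvmanifolds.

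For (i), I would induct on $\dim \g$. The base case is abelian $G \cong \R^n$, in which $\Gamma \backslash G$ is a torus and both cohomologies are trivially computed. For the inductive step, one uses that every nontrivial nilpotent Lie algebra has nontrivial center and that, by Malcev's rationality results, $\Gamma \cap Z(G)$ is a lattice in $Z(G)$. Picking a one-parameter central subgroup $K \subset Z(G)$ with $\Gamma \cap K$ cocompact in $K$ yields a principal circle fibration $S^1 \to \Gamma \backslash G \to \Gamma' \backslash G'$, where $G' = G/K$ and $\Gamma' = \Gamma K / K$ is a lattice in $G'$. Applying the inductive hypothesis to $\Gamma' \backslash G'$ and verifying that the $E_2$-pages of the two spectral sequences agree then yields the required isomorphism.

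For (ii), Hattori's approach refines this strategy. A completely solvable Lie algebra need not have nontrivial center, so one instead uses a codimension-one ideal $\h \subset \g$, which exists because $[\g,\g] \subsetneq \g$. The resulting fibration is no longer principal, but the real-spectrum condition on $\ad_x$ forces the monodromy representation acting on the fiber cohomology to act trivially at the level of invariant forms, which is exactly what is needed for the spectral sequence comparison to carry through; iterating the reduction finishes the argument.

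Case (iii) is Mostow's theorem in full generality, and is the hardest of the three. When $G$ is not completely solvable, the adjoint operators may have complex eigenvalues, and the monodromy in the Leray--Serre spectral sequence of the fibrations used in (i)--(ii) need not act trivially. The hypothesis that $\Ad_G(G)$ and $\Ad_G(\Gamma)$ share the same Zariski closure in $\GL(\g)$ compensates for this: averaging an arbitrary closed form against a suitable invariant measure produced from the algebraic hull yields an invariant representative for each cohomology class. The main obstacle throughout, and especially in case (iii), is the technical coordination between the algebraic averaging, the spectral sequence comparison, and the analytic work needed to guarantee that every de Rham class admits an invariant representative; for the general case I would ultimately refer to Mostow's original argument in \cite{Mostow 2} rather than attempt to reproduce it here.
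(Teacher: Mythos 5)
The paper does not actually prove this proposition: it is presented as a compilation of classical results, with (i) credited to Nomizu, (ii) to Hattori, and (iii) to Mostow, and the reader is referred to those sources (and to Raghunathan) for proofs. Your outline follows exactly that classical route and, like the paper, ultimately defers to the original references for the hard analytic and spectral-sequence work, so in spirit the two treatments coincide. That said, one step of your sketch as written would fail. In case (ii) you propose to induct using \emph{any} codimension-one ideal $\h \subset \g$ (which exists since $[\g,\g] \subsetneq \g$) and speak of ``the resulting fibration''; but for the corresponding closed normal subgroup $H$ to induce a fibration of $\Gamma\backslash G$ one needs $\Gamma \cap H$ to be a lattice in $H$, and this is not automatic for an arbitrary codimension-one ideal. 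Already for $G = \R^2$, $\Gamma = \Z^2$, a line of irrational slope is a codimension-one ideal meeting $\Gamma$ trivially. One must choose $\h$ \emph{rational} with respect to $\Gamma$ (e.g.\ by first passing to the nilradical, where Mostow guarantees $\Gamma \cap N$ is a lattice, and then choosing a rational hyperplane in the abelian quotient), and this requires an argument. Relatedly, your claim that complete solvability forces the monodromy on fiber cohomology to ``act trivially at the level of invariant forms'' is imprecise: the monodromy need not be trivial, and what is actually used is that the real-eigenvalue hypothesis makes the cohomology of the base with coefficients in the resulting local system computable from invariant data. Since you defer to Hattori and Mostow for the substance, these are gaps in the sketch rather than in the cited theorems, but they are worth being aware of if you ever try to write the induction out in full.
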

    \indent As stated, Proposition \ref{prop: sometimes it is an isomorphism} is a collection of well-known independent results: (i) is credited to Nomizu \cite[Theorem 1]{Nomizu}, (ii) is attributed to Hattori \cite[Theorem 4.1]{Hattori}, and (iii) is due to Mostow \cite[Theorem 8.1]{Mostow 2}. In fact, the condition stated in (iii) has come to be known as the \textit{Mostow condition}. Nice alternative proofs of (i) and (iii) are given in \cite[Corollary 7.28 and Corollary 7.29]{Raghunathan}. Notice that (i) and (ii) are actually particular cases of (iii), although not obviously so. 

    \indent A nilmanifold $M = \Gamma \backslash N$ is said to be a \textit{Heisenberg nilmanifold} if $N$ is isomorphic to the Heisenberg Lie group $H$ of the corresponding dimension. Heisenberg nilmanifolds are featured prominently in this article, partly because they admit Sasakian structures; moreover, they are the only nilmanifolds to do so (see Theorem \ref{thm: sasakian nilmanifolds} below). One of our main results is a partial generalization of this fact (see Theorem \ref{thm: odd benson-gordon}), and holds in part due to a result of Malcev we now recall.  
\begin{proposition} \cite{Malcev} \label{prop: Malcev rigidity}
    If $\Gamma_1$ and $\Gamma_2$ are lattices in simply connected nilpotent Lie groups $N_1$ and $N_2$ respectively, then every isomorphism $f\colon \Gamma_1 \to \Gamma_2$ extends uniquely to a Lie group isomorphism $F\colon N_1 \to N_2$. In particular, $\Gamma_1 \backslash N_1$ is diffeomorphic to $\Gamma_2 \backslash N_2$.
\end{proposition}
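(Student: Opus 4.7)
The proof rests on the fact that the exponential map $\exp\colon \mathfrak{n}_i \to N_i$ is a global diffeomorphism whenever $N_i$ is simply connected nilpotent. This makes it possible to define unique real powers $x^t := \exp(t \log x)$ for any $x \in N_i$ and $t \in \R$, and the Baker–Campbell–Hausdorff formula (which terminates because $\mathfrak{n}_i$ is nilpotent) renders multiplication on $N_i$ a polynomial operation in suitable coordinate systems.

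The first step is to produce a \emph{Malcev basis} for $\Gamma_1$: a sequence of elements $\gamma_1, \ldots, \gamma_k \in \Gamma_1$, with $k = \dim N_1$, adapted to the lower central series of $\Gamma_1$ in such a way that every $\gamma \in \Gamma_1$ has a unique expression $\gamma = \gamma_1^{m_1} \cdots \gamma_k^{m_k}$ with integer $m_i$, and that the assignment $\phi_1(t_1, \ldots, t_k) := \gamma_1^{t_1} \cdots \gamma_k^{t_k}$ defines a global diffeomorphism $\R^k \to N_1$. Such a basis exists by an inductive construction using the finitely generated, torsion-free abelian quotients of consecutive terms in the lower central series of $\Gamma_1$. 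Because $f$ preserves the lower central series, the images $\delta_i := f(\gamma_i)$ form a Malcev basis for $\Gamma_2$, yielding an analogous diffeomorphism $\phi_2\colon \R^k \to N_2$.

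With these coordinates in hand, I would define $F := \phi_2 \circ \phi_1^{-1}\colon N_1 \to N_2$; this is smooth and agrees with $f$ on $\Gamma_1$ by construction. To show that $F$ is a group homomorphism, one writes the multiplication laws on $N_1$ and $N_2$ in Malcev coordinates, where they become polynomial identities in $2k$ real variables; the hypothesis that $f|_{\Gamma_1}$ is a group homomorphism forces these two polynomial expressions to coincide on $\Z^k \times \Z^k$, and hence, by polynomial rigidity, on all of $\R^k \times \R^k$. Applying the same procedure to $f^{-1}$ produces a two-sided inverse, so $F$ is a Lie group isomorphism and descends to the asserted diffeomorphism of compact quotients. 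Uniqueness follows from the fact that the locus where two smooth homomorphisms $N_1 \to N_2$ agree is a closed subgroup of $N_1$, and a closed subgroup of a simply connected nilpotent Lie group containing a lattice is necessarily the whole group (pass to the compact quotient $\Gamma_1\backslash N_1$ and use that a proper closed connected subgroup cannot project to a dense subset). The principal obstacle lies precisely in the polynomial-rigidity step: one must set up the Malcev basis carefully enough that the group law genuinely takes polynomial form in the chosen coordinates, which in turn depends delicately on the adaptation to the lower central series. Once the polynomial form is established, the passage from integer identities to real identities is automatic.
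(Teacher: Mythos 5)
The paper offers no proof of this statement; it simply cites Malcev and points to \cite[Corollary 2 of Theorem 2.10]{Raghunathan}, so there is nothing internal to compare against. Your route --- Malcev coordinates of the second kind adapted to the (isolated) lower central series, polynomiality of the group law via Baker--Campbell--Hausdorff, and extension of the integer identity $P=Q$ on $\Z^k\times\Z^k$ to $\R^k\times\R^k$ by Zariski density --- is the standard textbook argument, and the existence part of your proof is essentially correct. One small inaccuracy there: the successive quotients of the lower central series of $\Gamma_1$ itself need not be torsion-free (only finitely generated nilpotent), so to get a genuine Malcev basis you must replace each term $\Gamma_{1,(i)}$ by its isolator $\{\gamma\in\Gamma_1 : \gamma^m\in\Gamma_{1,(i)}\text{ for some }m\geq 1\}$, which corresponds to $\Gamma_1\cap C^iN_1$; this is routine but should be said, since it is exactly what guarantees that $\phi_1(\Z^k)=\Gamma_1$ and that $\phi_2$ is again a global diffeomorphism.

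The genuine gap is in the uniqueness step. The claim that ``a closed subgroup of a simply connected nilpotent Lie group containing a lattice is necessarily the whole group'' is false: the lattice itself is such a subgroup (take $N_1=\R$, $\Gamma_1=\Z$, $H=\Z$). Your parenthetical repair assumes the agreement locus is \emph{connected}, which is precisely what is not known a priori --- for all your argument shows, two extensions could agree only on the discrete set $\Gamma_1$. The correct fix uses the structure you already set up: since $\exp$ is a diffeomorphism, every $x\in N_2$ has unique real powers, so any continuous homomorphism $F\colon N_1\to N_2$ satisfies $F(x^t)=F(x)^t$ for all $t\in\R$ (both sides are one-parameter subgroups agreeing at $t=1$, and such subgroups are determined by their time-one value). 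Hence the agreement locus of two extensions of $f$ is a closed subgroup stable under real powers and containing $\Gamma_1$; it therefore contains $\exp(\R\log\gamma)$ for every $\gamma\in\Gamma_1$, and since $\log\Gamma_1$ spans $\mathfrak{n}_1$ over $\R$ (a lattice cannot lie in a proper connected closed subgroup, by the cocompactness argument you sketch), the locus is all of $N_1$. With this replacement the proof is complete.
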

    \indent Proposition \ref{prop: Malcev rigidity} is colloquially termed \textit{Malcev's rigidity theorem}. See \cite[Corollary 2 of Theorem 2.10]{Raghunathan} for a proof of Theorem \ref{prop: Malcev rigidity}. There is a well-known generalization to completely solvable solvmanifolds, due to Saito \cite[Theorem 5]{Saito1}. Note that both these results can be seen as enhancements of Mostow's result on the rigidity of fundamental groups of solvmanifolds mentioned above. Our main use of Proposition \ref{prop: Malcev rigidity} is the next remark. 
\begin{remark} \label{obs: heisenberg nilmanifolds}
     Proposition \ref{prop: Malcev rigidity} ensures that any nilmanifold diffeomorphic to a Heisenberg nilmanifold is in fact a Heisenberg nilmanifold. For if $\Gamma_N \backslash N$ and $\Gamma_H \backslash H$ are diffeomorphic then their fundamental groups $\Gamma_N$ and $\Gamma_H$ are isomorphic as abstract groups, and we can extend that isomorphism to a Lie group isomorphism between $N$ and $H$. 
\end{remark}
    \indent Recall that lattices of Heisenberg groups have been classified in \cite[Theorem 2.4]{GW}. Endeavors like this for other solvable Lie groups are close to impossible unless strong restrictions are imposed. The question of whether lattices exist for a given solvable Lie group is also quite daunting. It is known that, for any fixed dimension,  only countably many non-isomorphic simply connected solvable Lie groups admit lattices (see \cite[Theorem 4]{Milovanov}); the same result is true even for general simply connected Lie groups (see \cite[Proposition 8.7]{Witte}). For a simply connected nilpotent Lie group $N$, a result of Malcev \cite{Malcev} ensures that the existence of lattices is equivalent to the Lie algebra $\n$ of $N$ having a rational basis (see \cite[Theorem 2.12]{Raghunathan} for a proof): Saying that the basis $\mathcal{B}$ of $\n$ is \textit{rational} means that all structure constants of the Lie bracket of $\n$ with respect to $\mathcal{B}$ are rational numbers. Also, there are known criteria for general solvable Lie groups (see \cite[Chapter 2, Section 3.7]{VGS}), but they are cumbersome and impractical unless very specific subclasses of solvable Lie groups are considered (see, for example, Proposition \ref{prop: yamada} below). A rather weak necessary condition for their existence that is most relevant for the concerns of this article is recalled next. 
\begin{proposition} \label{prop: milnor}
    If $G$ has lattices then it is unimodular; i.e., $\tr(\ad_x) = 0$ for all $x \in \g$.  
\end{proposition}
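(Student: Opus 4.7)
The plan is to invoke Milnor's classical argument: the existence of a lattice forces the modular function of $G$ to be trivial, which at the infinitesimal level is precisely the Lie-algebraic unimodularity in the statement. Fix a left Haar measure $\mu$ on $G$; since right translation sends a left Haar measure to another left Haar measure, there is a continuous homomorphism $\Delta \colon G \to \R_{>0}$, the \emph{modular function}, characterized by $\mu(Ag) = \Delta(g)\mu(A)$ for every Borel $A \subset G$ and every $g \in G$. A short computation via the exponential map yields $\Delta(\exp X) = e^{-\tr \ad X}$, so showing $\Delta \equiv 1$ on the identity component of $G$ is equivalent to $\tr \ad X = 0$ for all $X \in \g$.

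Given the lattice $\Gamma$, I would first descend $\mu$ to the quotient. Discreteness of $\Gamma$ makes the projection $\pi \colon G \to \Gamma\backslash G$ a local diffeomorphism, and the left $\Gamma$-invariance of $\mu$ (inherited from left $G$-invariance) ensures that $\mu$ drops to a well-defined Borel measure $\bar\mu$ on $\Gamma\backslash G$. Compactness of $\Gamma\backslash G$ gives a finite cover by evenly covered opens whose lifts are relatively compact in $G$, so a fundamental domain for $\Gamma$ lies in such a finite union and $\bar\mu(\Gamma\backslash G) < \infty$. The key step is then to exploit the right action: for each $g \in G$, right multiplication $R_g \colon G \to G$ commutes with the left $\Gamma$-action and hence descends to a diffeomorphism $\widetilde R_g \colon \Gamma\backslash G \to \Gamma\backslash G$, and the defining identity $R_g^*\mu = \Delta(g)\mu$ passes to the quotient as $\widetilde R_g^*\bar\mu = \Delta(g)\bar\mu$.

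Taking the total mass of both sides over the compact manifold $\Gamma\backslash G$ yields
\[
\bar\mu(\Gamma\backslash G) \;=\; (\widetilde R_g^*\bar\mu)(\Gamma\backslash G) \;=\; \Delta(g)\,\bar\mu(\Gamma\backslash G),
\]
where the first equality holds because $\widetilde R_g$ is a self-diffeomorphism of $\Gamma\backslash G$. Since $\bar\mu(\Gamma\backslash G)$ is finite and nonzero, this forces $\Delta(g) = 1$ for every $g \in G$, and combining with the exponential-map formula gives $\tr \ad X = 0$ for all $X \in \g$. The main obstacle, in my view, is the measure-theoretic bookkeeping at the descent stage: one must verify that $\bar\mu$ is well-defined and finite, and that right translation on $G$ genuinely descends to a $\Delta(g)$-scaling of $\bar\mu$ on $\Gamma\backslash G$. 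Once these routine but delicate checks are in place, the Lie-algebraic conclusion is immediate.
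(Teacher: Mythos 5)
Your argument is correct and is precisely the classical proof that the paper points to: the paper gives no proof of its own for this proposition, instead citing \cite[Lemma 6.2]{Milnor} and \cite[Remark 1.9]{Raghunathan}, and your descent of the left Haar measure to the compact quotient followed by the total-mass computation against the modular function is exactly the argument found there. The measure-theoretic checks you flag (well-definedness and finiteness of $\bar\mu$, descent of right translation) are indeed routine and go through as you describe, so there is no gap.
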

    \indent Proposition \ref{prop: milnor} is true for general connected Lie groups. There are proofs of this classical result in \cite[Lemma 6.2]{Milnor} and in \cite[Remark 1.9]{Raghunathan}. 

    \indent The main results of the present article are established at the Lie algebra level. We then argue that, for some choice of Lie algebras, the corresponding simply connected Lie groups have lattices. Since we are interested in cohomological properties, Propositions \ref{prop: there is always an injection} and \ref{prop: sometimes it is an isomorphism} are used to derive facts on the corresponding solvmanifolds. We recall some important results in this context in the following two sections. 

\subsection{Solvmanifolds and the symplectic Lefschetz condition}

    \indent Let $\h$ be a real Lie algebra of dimension $\dim \h = 2n$, not necessarily solvable. Recall that a $2$-form $\omega \in \alt^2 \h^*$ on $\h$ is said to be \textit{symplectic} if $\omega^n := \omega \wedge \cdots \wedge \omega$ ($n$ times) is nonzero and
\begin{align*}
    \omega([x,y], z) + \omega([y,z], x) + \omega([z,x], y) = 0 \text{ for all $x$, $y$, $z \in \h$}.
\end{align*}
    \noindent This last condition is equivalent to the fact that $d_{\h} \omega = 0$, meaning that $\omega$ is $d_{\h}$-closed, where $d_{\h}$ is the Chevalley-Eilenberg differential of $\h$ (refer to Section \ref{section: cohomology remarks} for more details). The pair $(\h, \omega)$ is said to be a \textit{symplectic Lie algebra}. If $\omega$ is an $d_{\h}$-exact form, that is, the $d_{\h}$-derivative of some $1$-form $\sigma \in \alt^1 \h^*$, then $\omega$ is called a \textit{Frobenius} form, and the pair $(\h, \omega)$ is called a \textit{Frobenius Lie algebra}. 

     \indent We are interested in unimodular Lie algebras (as in Proposition \ref{prop: milnor}). It turns out that the existence of a Frobenius form is at odds with unimodularity.   
\begin{proposition} \cite[Proposition 3.4]{DM} \label{prop: frobenius implies nonunimodular}
    Frobenius Lie algebras are nonunimodular.
\end{proposition}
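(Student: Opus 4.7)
The plan is to translate unimodularity of $\h$ into a cohomological statement about the Chevalley--Eilenberg complex, and then exhibit a direct obstruction coming from the Frobenius form. The key input is the identity
\[
    d_{\h}(\iota_x \mu) = -\tr(\ad_x)\,\mu
\]
for any $x \in \h$ and any nonzero top form $\mu \in \alt^m \h^*$, where $m = \dim \h$. This follows from $\mathcal{L}_x \mu = d_{\h}(\iota_x \mu)$ (as $d_{\h}\mu = 0$ for degree reasons) together with a routine basis computation using $d_{\h} e^j = -\ad_x^* e^j$. Because $x \mapsto \iota_x \mu$ is a linear isomorphism $\h \to \alt^{m-1}\h^*$, the identity shows that $\h$ is unimodular if and only if $d_{\h}\colon \alt^{m-1}\h^* \to \alt^m \h^*$ is the zero map, equivalently $H^m(\h) \neq 0$.

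Given this dictionary, the proof reduces to showing that every Frobenius Lie algebra satisfies $H^{2n}(\h) = 0$, where $\dim \h = 2n$. Write $\omega = d_{\h}\sigma$. Since $d_{\h}\omega = 0$, one has
\[
    \omega^n = d_{\h}\sigma \wedge \omega^{n-1} = d_{\h}\bigl(\sigma \wedge \omega^{n-1}\bigr),
\]
so the nonzero volume form $\omega^n$ is $d_{\h}$-exact. As $\alt^{2n}\h^*$ is one-dimensional and spanned by $\omega^n$, every top form is $d_{\h}$-exact. Thus $H^{2n}(\h) = 0$, which by the first paragraph forces $\h$ to be nonunimodular.

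The only step requiring genuine care is the equivalence ``$\h$ unimodular $\iff H^m(\h) \neq 0$'' used in the first paragraph; this encodes Poincaré duality for Chevalley--Eilenberg cohomology, and its verification via the identity $d_{\h}(\iota_x \mu) = -\tr(\ad_x)\mu$ is the main technical content. Once that identity is in hand, the remainder of the argument is purely formal, and no further obstacle arises.
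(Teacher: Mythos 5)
Your argument is correct and complete. Note that the paper itself does not prove this proposition: it simply cites \cite[Proposition 3.4]{DM}, where the result is obtained by a different (more algebraic) route via the principal element of a Frobenius Lie algebra. Your proof is instead the purely cohomological one, and it is in fact perfectly aligned with machinery the paper develops later for other purposes: the identity $d_{\h}(\iota_x\mu)=-\tr(\ad_x)\,\mu$ and the resulting dictionary ``$\h$ unimodular $\iff$ $H^{m}(\h)\neq 0$'' are exactly items (i)--(iv) of Lemma \ref{lemma: H2n-1 is almost easy}. Given that, the observation that $\omega=d_{\h}\sigma$ forces $\omega^{n}=d_{\h}(\sigma\wedge\omega^{n-1})$, so that the nonzero volume form is exact and $H^{2n}(\h)=0$, closes the argument with no gaps. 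The only thing worth flagging is that your proof quietly uses $d_{\h}(\omega^{n-1})=0$, which follows from $d_{\h}\omega=0$ and the Leibniz rule; this is immediate but should be said. What your approach buys over the cited one is self-containedness within the paper's own toolkit; what the principal-element approach in \cite{DM} buys is extra structural information about Frobenius Lie algebras (an explicit element with nonzero trace) that goes beyond the bare nonunimodularity statement.
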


    \indent The next result links unimodularity and solvability in the symplectic setting.
\begin{proposition} \cite[Theorem 11]{Chu} \label{prop: chu}
    Unimodular symplectic Lie algebras are solvable. 
\end{proposition}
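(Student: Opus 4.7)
The plan is to argue by contradiction: suppose $(\h, \omega)$ is a unimodular symplectic Lie algebra that fails to be solvable, and aim for a contradiction with Proposition \ref{prop: frobenius implies nonunimodular} by producing a Frobenius symplectic form on a semisimple (hence unimodular) Lie algebra. First I would invoke Levi's theorem to write $\h = \s \ltimes \mathfrak{r}$, with $\s$ a nonzero semisimple Levi subalgebra and $\mathfrak{r}$ the solvable radical. Recall that $\s$ is automatically unimodular, since for semisimple Lie algebras $\tr(\ad_x) = 0$ by skew-symmetry of $\ad$ with respect to the Killing form (equivalently, $\s = [\s,\s]$).

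Next, I would exploit Whitehead's second lemma, $H^2(\s) = 0$, to conclude that the restriction $\omega|_{\s \times \s}$, being a closed $2$-form on $\s$, must be exact: $\omega|_{\s \times \s} = d_{\s} \sigma$ for some $\sigma \in \s^*$. If this restriction happened to be nondegenerate, then $(\s, \omega|_{\s \times \s})$ would be a Frobenius Lie algebra, and Proposition \ref{prop: frobenius implies nonunimodular} would immediately force $\s$ to be nonunimodular, contradicting the semisimplicity of $\s$. Thus the genuine case to address is when $\omega|_{\s \times \s}$ has a nontrivial kernel.

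To deal with the degenerate case, I would run the Hochschild–Serre spectral sequence of the extension $0 \to \mathfrak{r} \to \h \to \s \to 0$. Whitehead's first and second lemmas yield $E_2^{1,q} = E_2^{2,q} = 0$ for every $q$, producing an edge injection $H^2(\h) \hookrightarrow H^2(\mathfrak{r})^{\s}$. Hence $[\omega]$ is represented, up to a coboundary, by a closed form that is $\s$-invariant and concentrated on the radical. Using Weyl's complete reducibility to decompose $\h$ as an $\s$-module and tracking $\omega$ on its isotypical components, one should be able to exhibit an $\s$-stable symplectic subspace on which the induced form is $\s$-invariant, exact (again by $H^2(\s) = 0$), and nondegenerate, i.e.\ Frobenius, once more contradicting Proposition \ref{prop: frobenius implies nonunimodular}.

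The hard part will be controlling nondegeneracy throughout these cohomological manipulations. Subtracting an exact form to gain $\s$-invariance can easily destroy the nondegeneracy of $\omega$, so the $\s$-invariant representative given by the spectral sequence need not itself be symplectic. The argument likely must combine the invariant-representative construction with an inductive step on $\dim \h$, at each stage passing to a smaller unimodular symplectic Lie algebra that still carries a nonzero semisimple Levi factor. Iterating must eventually produce a unimodular semisimple Lie algebra admitting a Frobenius form, which Proposition \ref{prop: frobenius implies nonunimodular} rules out, closing the contradiction.
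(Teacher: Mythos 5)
The paper offers no proof of this proposition: it is imported verbatim from \cite[Theorem 11]{Chu}, so there is nothing internal to compare your argument against and it must be judged on its own. As written it is not a proof but a programme with an essential gap that you yourself flag. The only case you actually close is the one where $\omega|_{\s\times\s}$ is nondegenerate, and that case is nearly vacuous: the Levi factors most likely to occur, such as $\mathfrak{sl}(2,\R)$ and $\mathfrak{su}(2)$, are odd-dimensional, so the restriction of any $2$-form to them is automatically degenerate. The entire content of the theorem sits in the degenerate case, where your argument reduces to ``one should be able to exhibit\dots'' and ``the argument likely must combine\dots with an inductive step on $\dim\h$'' without specifying the smaller symplectic Lie algebra, why it remains unimodular, or why it retains a nonzero Levi factor.

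Beyond the missing induction, two steps would fail as stated. The edge injection $H^2(\h)\hookrightarrow H^2(\mathfrak{r})^{\s}$ obtained from $E_2^{2,0}=E_2^{1,1}=0$ is just injectivity of the restriction map $[\omega]\mapsto[\omega|_{\mathfrak{r}}]$; it does not produce a cohomologous representative ``concentrated on the radical,'' and no such representative could be symplectic anyway, since a $2$-form on $\h$ vanishing whenever one argument lies in $\s$ contains $\s$ in its kernel. More fundamentally, the contradiction you aim for requires a Lie \emph{subalgebra} that is simultaneously unimodular and Frobenius: an ``$\s$-stable symplectic subspace'' need not be a subalgebra; Whitehead's second lemma yields exactness only for forms on $\s$ itself (or cochains of $\s$ with values in a module), not for forms on other $\s$-stable subspaces, where there is not even a differential to speak of; and unimodularity of a subalgebra of $\h$ does not follow from unimodularity of $\h$. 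So the intended appeal to Proposition~\ref{prop: frobenius implies nonunimodular} is genuinely blocked, not merely unfinished, and a complete argument requires a different reduction --- for instance the analysis of the action of the Levi factor on the radical carried out in \cite{Chu} --- rather than the manufacture of a Frobenius subalgebra.
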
 

\indent In relation to the symplectic $1$-Lefschetz condition, a characterization of unimodular symplectic Lie algebras (thus solvable, as per Proposition \ref{prop: chu}) is already known. One of the main goals in this article is to obtain something as close as possible to this characterization but in the contact setting. We believe both Theorem \ref{thm: main, i guess} and Corollary \ref{cor: boring benson-gordon} below accomplish this in spirit. 
\begin{theorem} \cite[proof of Theorem 2, Remarks in Section 2]{BG2} \label{thm: Benson-Gordon 2}
    A symplectic unimodular Lie algebra $(\h, \omega)$ is $1$-Lefschetz if and only if the following conditions are met:
\begin{enumerate} [\rm (i)]
    \item There is an abelian complement $\a$ in $\h$ of the derived subalgebra $\n := [\h,\h]$. 
    \item Both $\a$ and $\n$ are even-dimensional. 
    \item The center $\z(\h)$ of $\h$ intersects $\n$ trivially. 
    \item The symplectic form $\omega$ is cohomologous to a left-invariant symplectic form $\omega_{\a} + \omega_{\n}$, where $\n = \ker(\omega_{\a})$ and $\a = \ker(\omega_{\n})$. This amounts to the fact that $\a$ and $\n$ are symplectic and $\omega$-orthogonal subspaces of $(\h,\omega)$. 
    \item Both $\omega_{\a}$ and $\omega_{\n}$ are closed but non-exact on $\h$ (also in $\a$ and in $\n$, respectively). 
    \item The adjoint action of $\a$ on $\n$ is by infinitesimal symplectic automorphisms of $(\n, \omega_{\n})$.
\end{enumerate}
\end{theorem}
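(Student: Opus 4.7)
The plan is to recast the $1$-Lefschetz condition as the non-degeneracy of a bilinear pairing on $H^1(\h)$, and then to extract the structural consequences by decomposing $\omega$ with respect to any vector space complement of $\n := [\h,\h]$. Since $\h$ is unimodular of dimension $2n$, Chevalley--Eilenberg cohomology satisfies Poincar\'e duality: $H^{2n}(\h)\cong \R$ and the cup product $H^1(\h) \otimes H^{2n-1}(\h) \to H^{2n}(\h)$ is a perfect pairing. Using the identification $H^1(\h) \cong (\h/\n)^*$, we get $\dim H^{2n-1}(\h) = \dim(\h/\n)$, and the Lefschetz operator $L^{n-1}\colon H^1(\h) \to H^{2n-1}(\h)$ is an isomorphism precisely when the alternating bilinear form
\begin{equation*}
\Phi\colon H^1(\h) \times H^1(\h) \to H^{2n}(\h) \cong \R, \qquad \Phi([\alpha],[\beta]) := [\omega^{n-1} \wedge \alpha \wedge \beta],
\end{equation*}
is non-degenerate. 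Being alternating, non-degeneracy of $\Phi$ immediately forces $\dim(\h/\n)$ to be even, and hence $\dim \n$ is even as well, which gives condition~(ii).

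Next I would fix a vector space complement $\a$ of $\n$ and decompose $\omega$ bigradedly as $\omega = \omega_{\a\a} + \omega_{\a\n} + \omega_{\n\n}$ relative to the splitting $\alt^2 \h^* = \alt^2 \a^* \oplus (\a^* \otimes \n^*) \oplus \alt^2 \n^*$. For $\alpha,\beta \in \a^*$ (which, extended by zero on $\n$, represent all classes in $H^1(\h)$), expanding $\omega^{n-1} \wedge \alpha \wedge \beta$ via the multinomial theorem and pairing against a unimodular volume shows that only the contribution coming from $\omega_{\a\a}^{p-1} \wedge \omega_{\n\n}^q$ survives, where $2p := \dim \a$ and $p+q=n$. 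Non-degeneracy of $\Phi$ then forces $\omega_{\a\a}$ to be symplectic on $\a$ and $\omega_{\n\n}$ to be symplectic on $\n$, while the mixed piece $\omega_{\a\n}$ can be annihilated by passing to a cohomologous representative, yielding the decomposition in~(iv) with $\omega_{\a} := \omega_{\a\a}$ and $\omega_{\n} := \omega_{\n\n}$.

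The remaining structural conditions (i), (v), (vi) I would extract by applying $d\omega = 0$ bigradedly, using that the Chevalley--Eilenberg differential raises the $\n$-degree by one since $[\h,\h]\subseteq \n$. Vanishing of each bigraded component yields the following: $d\omega_{\a}$ lies in $\a^* \otimes \n^*$ and vanishes if and only if $\a$ is closed under the bracket; since automatically $[\a,\a]\subseteq \n\cap \a = 0$, this forces $\a$ to be abelian, giving~(i). The vanishing of the $\alt^3 \n^*$-component of $d\omega_{\n}$ translates to the statement that $\a$ acts on $(\n,\omega_{\n})$ by infinitesimal symplectic automorphisms, giving~(vi). Non-exactness of $\omega_{\a}$ and $\omega_{\n}$ in~(v) follows because their top powers are nowhere-vanishing volume forms on $\a$ and $\n$ respectively, and the unimodularity of the subalgebras $\a$ and $\n$ forbids an exact representative. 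Finally, for~(iii), if $z \in \z(\h)\cap \n$ then $\iota_z \, \vol$ is a closed $(2n-1)$-form (closedness coming from unimodularity together with $\ad_z=0$) whose class lies outside the image of $L^{n-1}$, contradicting surjectivity.

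The converse is then checked by constructing explicit Lefschetz isomorphisms from the given decomposition, using a K\"unneth-type identification $H^*(\h) \cong H^*(\a) \otimes H^*(\n)^{\a}$ afforded by the semidirect product structure $\h = \a \ltimes \n$. The main technical obstacle lies in the cohomological modification step in~(iv): one must verify that $\omega_{\a\n}$ can be eliminated by a coboundary \emph{simultaneously} with choosing a complement $\a$ that is a subalgebra, and I expect this requires a careful analysis of the Hochschild--Serre spectral sequence for $0\to \n \to \h \to \h/\n\to 0$, made compatible with the Lefschetz constraint; this is the heart of Benson--Gordon's original argument.
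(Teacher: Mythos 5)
First, a point of comparison: the paper does not prove this statement at all --- it is quoted verbatim from Benson--Gordon \cite[proof of Theorem 2, Remarks in Section 2]{BG2}, and the paper only rederives the preliminary identifications $H^1(\h)\cong\a^*$ and $H^{2n-1}(\h)\cong\{\iota_x\Omega\mid x\in\a\}$ in Lemmas \ref{lemma: H1 is easy} and \ref{lemma: H2n-1 is easy for unimodular}. Your opening move is exactly the Benson--Gordon mechanism and is correct: by Poincar\'e duality for unimodular $\h$, the map $L^{n-1}\colon H^1(\h)\to H^{2n-1}(\h)$ is bijective if and only if the alternating pairing $\Phi([\alpha],[\beta])=[\omega^{n-1}\wedge\alpha\wedge\beta]$ is non-degenerate, which immediately gives (ii).

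Several of your subsequent deductions, however, do not work as stated. (a) In the multinomial expansion of $\omega^{n-1}\wedge\alpha\wedge\beta$, the terms of bidegree $(2p-2,2q)$ are all products $\omega_{\a\a}^{\,i}\wedge\omega_{\a\n}^{\,j}\wedge\omega_{\n\n}^{\,k}$ with $j$ \emph{even}, not only $j=0$ (e.g.\ $i=p-2$, $j=2$, $k=q-1$), so the mixed component genuinely contributes to $\Phi$ unless you have already arranged $\omega_{\a\n}=0$ --- which is part of what must be proved; the computation and the cohomological modification cannot be decoupled in the order you propose. (b) Your derivation of (i) is vacuous: since $[\h,\h]\subseteq\n$, every $\theta\in\a^*$ is closed (Lemma \ref{lemma: H1 is easy}(i)), hence $d\omega_{\a\a}=0$ for \emph{any} complement $\a$; closedness of $\omega_\a$ cannot detect whether $\a$ is a subalgebra, and the existence of an abelian complement that is a subalgebra is precisely the hard structural content. (c) For (iii) you have the wrong contradiction: for $z\in\z(\h)\cap\n$ the form $\iota_z\Omega$ lies in $\alt^{k,l-1}\h^*$ and is therefore \emph{exact} by Lemma \ref{lemma: H2n-1 is easy for unimodular}(ii), so its class is $0$ rather than a class outside the image of $L^{n-1}$. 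The correct argument is that $\iota_z\omega$ is a nonzero closed $1$-form (it lies in $\a^*$ because $d\omega=0$ and centrality of $z$ force $\omega(z,\n)=0$) with $L^{n-1}[\iota_z\omega]=\tfrac1n[\iota_z(\omega^n)]=0$, contradicting injectivity. (d) Non-exactness in (v) is non-exactness \emph{on $\h$}, which does not follow from unimodularity of the subalgebras $\a$ and $\n$; it follows because $\omega_\a=d\sigma$ would make the nonzero top form $\omega_\a^{\,p}\wedge\omega_\n^{\,q}$ exact, contradicting $H^{2n}(\h)\neq 0$. Finally, the converse needs no K\"unneth-type degeneration: once $\omega$ is cohomologous to $\omega_\a+\omega_\n$ with both summands symplectic, only the bidegree-$(2p-2,2q)$ term survives and non-degeneracy of $\Phi$ is elementary symplectic linear algebra. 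You rightly flag the simultaneous choice of an abelian subalgebra complement and the elimination of $\omega_{\a\n}$ as the heart of the matter, but together with (a)--(c) this leaves the proposal with genuine gaps rather than a complete proof.
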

    \indent Theorem \ref{thm: Benson-Gordon 2} ensures in particular that if $\g$ is the Lie algebra associated to a solvmanifold $\Gamma \backslash G$ and there is an isomorphism $H_{dR}^*(\Gamma \backslash G) \cong H^*(\g)$ (see Proposition \ref{prop: sometimes it is an isomorphism}) then $G$ is a semidirect product $A \ltimes N$, where $A$ is a connected abelian subgroup of $G$ and $N$ is the (nilpotent) commutator subgroup of $G$; moreover, $N$ admits a left-invariant symplectic structure, and the action of $A$ in $N$ is by symplectomorphisms.    

    \indent Some observations leading to the proof of Theorem \ref{thm: Benson-Gordon 2} given in \cite{BG2} are of interest for us, since are also used to establish some of the main results in this article. We defer to Section \ref{section: cohomology remarks} for more details. 

    \indent The situation for nilmanifolds is simpler.  
    
\begin{theorem} \cite[proof of Theorem A]{BG1} \label{thm: Benson-Gordon 1}
    A symplectic nilmanifold is $1$-Lefschetz if and only if it is diffeomorphic to a torus. 
\end{theorem}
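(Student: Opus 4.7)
The plan is to establish both directions of the biconditional. The forward direction is immediate: any $2n$-torus $T^{2n} = \R^{2n}/\Z^{2n}$ carries a flat Kähler structure inherited from $\R^{2n}$, so it is hard-Lefschetz and in particular $1$-Lefschetz.

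For the converse, let $M = \Gamma\backslash N$ be a nilmanifold, with $N$ simply connected nilpotent with associated Lie algebra $\n$, and suppose $\omega$ is a symplectic form on $M$ that is $1$-Lefschetz. The first step is a symmetrization argument based on Nomizu's theorem (Proposition \ref{prop: sometimes it is an isomorphism}(i)): the natural inclusion $\alt^* \n^* \hookrightarrow \Omega^*(M)$ induces an isomorphism in cohomology, so there is an invariant $2$-form $\omega_0 \in \alt^2 \n^*$ with $[\omega_0] = [\omega] \in H^2_{dR}(M)$. I would then verify that $\omega_0$ is symplectic on $\n$: since $[\omega_0^n] = [\omega^n] \neq 0$ in $H^{2n}_{dR}(M)$ and $\n$ is unimodular (being nilpotent), no nonzero exact invariant top form exists, so $\omega_0^n \neq 0$ as an element of $\alt^{2n}\n^*$. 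Moreover, because the Lefschetz operator $L^{n-1}\colon H^1_{dR}(M)\to H^{2n-1}_{dR}(M)$ depends only on the class $[\omega]^{n-1} = [\omega_0]^{n-1}$, the symplectic Lie algebra $(\n, \omega_0)$ inherits the $1$-Lefschetz property.

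Once the problem is reduced to the invariant setting, I would invoke Theorem \ref{thm: Benson-Gordon 2} on $(\n, \omega_0)$, which is unimodular. Its condition (iii) yields $\z(\n) \cap [\n,\n] = 0$. However, in any nilpotent Lie algebra, the last nonzero term of the lower central series is simultaneously contained in $[\n,\n]$ (whenever $[\n,\n] \neq 0$) and in $\z(\n)$. This forces $[\n,\n] = 0$, meaning $\n$ is abelian. Then $N \cong \R^{2n}$, $\Gamma$ is a cocompact lattice in $\R^{2n}$, and $M$ is diffeomorphic to a torus, as desired.

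The main obstacle is the symmetrization step, which allows us to dispense with the invariance assumption on $\omega$; here it is clean thanks to Nomizu's isomorphism, which is why the result holds without any hypothesis on $\omega$ being invariant. After that, only condition (iii) of Theorem \ref{thm: Benson-Gordon 2} is needed, together with the elementary observation that in a nonabelian nilpotent Lie algebra the center always meets the commutator subalgebra nontrivially, to conclude that $\n$ is abelian.
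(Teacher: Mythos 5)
Your argument is correct and is essentially the same one the paper gives in the remark following the theorem: reduce to an invariant symplectic form via Nomizu's isomorphism, apply Theorem \ref{thm: Benson-Gordon 2}, and observe that condition (iii) there fails for any nonabelian nilpotent Lie algebra because the last nonzero term of the lower central series lies in both the center and the commutator. Your write-up just fleshes out the symmetrization step and the trivial forward direction in more detail than the paper does.
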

    \indent Notice that Theorem \ref{thm: Benson-Gordon 1} follows from Nomizu's theorem (see Proposition \ref{prop: sometimes it is an isomorphism}(i)) and Theorem \ref{thm: Benson-Gordon 2} by noting that, since $\z(\h)$ and $[\h,\h]$ intersect nontrivially for $\h$ nilpotent and nonabelian, and so (iii) fails to hold. However, in \cite{BG1} a different proof is given.  

    \indent One of our main results is an analogous characterization to the one in Theorem \ref{thm: Benson-Gordon 1} in the contact setting, but with a further hypothesis regarding the contact form (see Theorem \ref{thm: odd benson-gordon}).

    \indent Theorem \ref{thm: Benson-Gordon 1} entails that a nilmanifold has a Kähler form if and only if it is diffeomorphic to a torus. In fact, that is how the main result of \cite{BG1} is stated. This particular characterization is known to have many different proofs, including \cite{Hano} (in the invariant setting), \cite{BG1}, \cite{McDuff}, and \cite{Hasegawa 1}; remarkably, the last three proofs were published within a year. A similar characterization is now known for a general solvmanifold: it admits Kähler forms if and only if it is a finite quotient of a complex torus which has the structure of a complex torus bundle over a complex torus; moreover, a solvmanifold of a completely solvable Lie group admits Kähler forms if and only if it is a complex torus (see \cite[Main Theorem]{Hasegawa 2} for the proof of both claims). A similar characterization for Sasakian solvmanifolds is also known, and we describe it in Section \ref{section: solvmanifolds and the contact Lefschetz condition.}. 

    \indent For more information about what is known concerning the Lefschetz condition on symplectic solvmanifolds, see \cite{AG2}. 

\subsection{Solvmanifolds and the contact Lefschetz condition} \label{section: solvmanifolds and the contact Lefschetz condition.} 

    \indent Let $\g$ be a real Lie algebra of dimension $\dim \g = 2n+1$, not necessarily solvable. Following Section \ref{section: the lefschetz condition}, a $1$-form $\eta \in \alt^1 \g^*$ is said to be \textit{contact} if 
\begin{align*}
    \eta \wedge (d \eta)^n \neq 0.
\end{align*}
    \noindent In such case, the pair $(\g, \eta)$ is called a \textit{contact Lie algebra}. As before, there is a unique vector $\xi \in \g$ satisfying
\begin{align*}
    \iota_{\xi} \eta = 1, \quad \iota_{\xi} d \eta = 0. 
\end{align*}
\noindent and it is called the \textit{Reeb vector} of $(\g, \eta)$. Notice that the condition $\iota_{\xi} d \eta = 0$ implies that 
\begin{align*}
    \mathrm{im}(\ad_{\xi}) \subseteq \ker \eta, \quad \ad_{\xi}( \ker \eta) \subseteq \ker \eta. 
\end{align*}
    \indent Recall that the only semisimple Lie algebras admitting a contact form are $\mathfrak{su}(2)$ and $\mathfrak{sl}(2,\R)$ (see \cite[Theorem 5]{BW}), and in particular both of them are $3$-dimensional. Together with Remark \ref{obs: iff unimod, nil, solv} below, this effectively restricts our attention to the solvable case, despite the aim of working in full generality.

    \indent While the material we review in this section is fairly classical, we follow the articles \cite{AFV} and \cite{CF}. Therein, proofs are usually carried over in the Sasakian (respectively, $K$-contact) and Kähler (respectively, almost Kähler) context, but remain true in our more general setting. 
\begin{proposition} \cite[Proposition 1]{AFV} \label{prop: center in contact Lie algebras}
    The center of a contact Lie algebra is either trivial or 1-dimensional. In the latter case, it is generated by the Reeb vector. 
\end{proposition}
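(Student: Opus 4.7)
The plan is to combine two observations: first, any central element $z$ contracts $d\eta$ to zero; second, the contact condition forces $d\eta$ to have a one-dimensional kernel, precisely $\R\xi$. Together these trap $\z(\g)$ inside $\R\xi$, giving the stated dichotomy.

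For the first step, I would use the Chevalley--Eilenberg formula $d\eta(x,y) = -\eta([x,y])$ valid on a Lie algebra. If $z \in \z(\g)$, then $[z,y] = 0$ for every $y \in \g$, so
\[
    (\iota_z d\eta)(y) = d\eta(z,y) = -\eta([z,y]) = 0
\]
for all $y \in \g$. Hence $\iota_z d\eta = 0$, i.e.\ $z \in \ker(d\eta)$, where we view $d\eta \in \alt^2\g^*$ as a skew bilinear form.

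For the second step, I would analyse $\ker(d\eta)$ using the non-degeneracy condition $\eta \wedge (d\eta)^n \neq 0$. Since $\dim \g = 2n+1$, the form $(d\eta)^n$ is a top-degree form on the $2n$-dimensional subspace complementary to any nonzero linear functional; more directly, $(d\eta)^n \neq 0$ forces $d\eta$ to have maximal rank $2n$, so $\dim \ker(d\eta) = 1$. Because $\iota_\xi d\eta = 0$ by the very definition of the Reeb vector and $\xi \neq 0$ (as $\eta(\xi)=1$), we conclude $\ker(d\eta) = \R\xi$.

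Combining both observations yields $\z(\g) \subseteq \R\xi$, so $\z(\g)$ is at most one-dimensional; and in the one-dimensional case it is generated by $\xi$ itself. The argument is essentially a linear-algebra consequence of the contact condition, and I do not anticipate a genuine obstacle — the only point worth emphasising is the rank computation showing that the contact non-degeneracy forces $\ker(d\eta)$ to be exactly one-dimensional rather than possibly larger.
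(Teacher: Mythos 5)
Your proof is correct and follows essentially the same route as the paper's (the paper cites \cite{AFV}, but its argument is the standard one): a central element $z$ satisfies $\iota_z d\eta = 0$ because $d\eta(z,y) = -\eta([z,y]) = 0$, and the contact condition pins the kernel of $d\eta$ down to $\R\xi$. The only cosmetic difference is that the paper phrases the nondegeneracy step by decomposing $z = a\xi + x$ with $x \in \ker\eta$ and invoking nondegeneracy of $d\eta$ on the hyperplane $\ker\eta$, whereas you compute the rank of $d\eta$ on all of $\g$; both are the same linear-algebra fact.
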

\begin{remark} \label{obs: decomposable lie algebras are not contact}
    As a consequence of Proposition \ref{prop: center in contact Lie algebras}, decomposable nilpotent Lie algebras are never contact: as each factor is a nilpotent Lie algebra, the center of the original algebra would have dimension at least $2$. In particular, a decomposable nilmanifold does not have an invariant contact form. This gives an alternative and simpler proof of \cite[Theorem 3.2]{Kutsak}. On the other hand, the deep result in \cite[Theorem 1.1]{BEM} implies that any odd-dimensional parallelizable closed manifold admits contact forms (see \cite[Theorem 4]{Bock}); in particular, decomposable nilmanifolds do have (noninvariant) contact forms. 
\end{remark}
    \indent In this article we are concerned exclusively with contact Lie algebras having nontrivial center, which turn out to be in bijective correspondence with symplectic Lie algebras via a two-way construction process we call \textit{contactization} which we now describe. 
\begin{proposition} \cite[Proposition 2]{AFV} 
    \phantom{.}
\begin{enumerate} [\rm (i)]
    \item If $(\h, \omega)$ is a symplectic Lie algebra then the $1$-dimensional vector space extension $\g := \R \xi \oplus \h$ is made a Lie algebra with bracket
\begin{align} \label{eq: contactization bracket}
    [x,y]_{\g} := \omega(x,y) \xi + [x,y]_{\h} \text{ for all $x$, $y \in \h$}, \quad [\xi, \h] = 0,
\end{align}
    \noindent and the $1$-form $\eta \in \alt^1 \g^*$ given by
\begin{align*}
    \eta(a \xi + x) = a, \text{ for $a \in \R$ and $x \in \g$}
\end{align*}
    \noindent is a contact form on $\g$. In particular, $\xi \in \g$ is the Reeb vector on $(\g, \eta)$ and $\z(\g) = \R \xi$. Also, $\omega = - d_{\g} \eta$. 
    \item If $(\g, \eta)$ is a contact Lie algebra with nontrivial center then the pair $(\h, \omega)$ given by $\h := \ker \eta$ and $\omega := (- d_{\g} \eta) \vert_{\h}$ is a symplectic Lie algebra of dimension $2n$ with bracket
\begin{align*}
    [\cdot, \cdot]_{\h} := p_{\h} \circ [\cdot, \cdot]_{\g},
\end{align*}
     \noindent where $p_{\h}:\R \xi \oplus \h \to \h$ is the canonical projection. Moreover, equation \eqref{eq: contactization bracket} holds.
\end{enumerate}
\end{proposition}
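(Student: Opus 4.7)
The plan is to treat the two directions independently as converse constructions, verifying each directly with the Chevalley-Eilenberg formula and the defining identities of the Reeb vector. Throughout, the key computation $(d_{\g}\eta)(u,v) = -\eta([u,v]_{\g})$ for $u,v \in \g$ (and the analogous formula for $d_{\h}\omega$) is what links the Lie-algebraic and differential-form sides of each claim.

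For (i), I would first verify that the prescribed bracket makes $\g = \R\xi \oplus \h$ into a Lie algebra. Antisymmetry is immediate. For the Jacobi identity, triples involving $\xi$ vanish because $\xi$ is central by construction; a triple $(x,y,z) \in \h^3$ splits into an $\h$-component annihilated by Jacobi in $\h$ and a $\xi$-component equal to $-(d_{\h}\omega)(x,y,z)\xi$, which vanishes by the symplectic hypothesis on $(\h,\omega)$. Applying Chevalley-Eilenberg then gives $(d_{\g}\eta)(x,y) = -\omega(x,y)$ for $x,y \in \h$ and $(d_{\g}\eta)(\xi, x) = 0$, so $d_{\g}\eta = -\omega$ extended trivially on $\R\xi$. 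This both identifies $\xi$ as the Reeb vector (since $\iota_{\xi}\eta = 1$ and $\iota_{\xi} d_{\g}\eta = 0$) and yields $\eta \wedge (d_{\g}\eta)^n = (-1)^n \eta \wedge \omega^n \neq 0$, so $\eta$ is contact. Finally, if $z = a\xi + x \in \z(\g)$ with $x \in \h$, then $[z,y]_{\g} = \omega(x,y)\xi + [x,y]_{\h} = 0$ for all $y \in \h$ forces $\omega(x,\cdot)|_{\h} \equiv 0$, hence $x = 0$ by nondegeneracy of $\omega$; combined with $\xi \in \z(\g)$, this gives $\z(\g) = \R\xi$.

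For (ii), the argument hinges on the tacit assumption (made explicit in the surrounding discussion) that $\g$ has nontrivial center, so that $\xi$ is central by Proposition \ref{prop: center in contact Lie algebras}. With $\h := \ker \eta$ and $\omega := (-d_{\g}\eta)|_{\h}$, I would check that $(\h, [\cdot,\cdot]_{\h})$ with $[\cdot,\cdot]_{\h} := p_{\h} \circ [\cdot,\cdot]_{\g}$ is a Lie algebra: antisymmetry is clear, and Jacobi is obtained by projecting Jacobi of $\g$ onto $\h$, where the centrality of $\xi$ eliminates the otherwise problematic cyclic sum $\sum_{\mathrm{cyc}} \omega(x,y)[\xi, z]_{\g}$. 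Nondegeneracy of $\omega$ on $\h$ follows because a nonzero vector in its kernel would force $\omega^n = 0$ on $\h$ and hence $\eta \wedge (d_{\g}\eta)^n = 0$, contradicting the contact condition; closedness of $\omega$ follows from Chevalley-Eilenberg combined with Jacobi of $\g$, using $\eta([\xi, z]_{\g}) = 0$ to identify $\omega([x,y]_{\h}, z)$ with $\eta([[x,y]_{\g}, z]_{\g})$. The contactization formula \eqref{eq: contactization bracket} is then obtained by decomposing $[x,y]_{\g}$ with respect to $\R\xi \oplus \h$ and computing $\eta([x,y]_{\g}) = -d_{\g}\eta(x,y) = \omega(x,y)$. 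The main subtlety, and the only point where the proof could go wrong, is confined to this second direction: Jacobi for $[\cdot,\cdot]_{\h}$ genuinely requires the centrality of $\xi$, and without this hypothesis the projected bracket need not be a Lie bracket at all; once this structural point is isolated, everything else reduces to careful bookkeeping with the Chevalley-Eilenberg differential and the Reeb identities.
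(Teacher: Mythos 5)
Your proof is correct, and it is the standard direct verification; the paper itself gives no argument here, deferring entirely to the citation \cite[Proposition 2]{AFV}, so there is nothing in the text to diverge from. The one genuinely delicate point — that in part (ii) the Jacobi identity for $p_{\h}\circ[\cdot,\cdot]_{\g}$ fails in general unless $\xi$ is central, since the obstruction is the cyclic sum of $\omega(x,y)\,\ad_{\xi}z$ — is exactly the right thing to isolate, and your reading that the statement tacitly assumes $\z(\g)\neq 0$ (so that Proposition \ref{prop: center in contact Lie algebras} makes $\xi$ central) matches the paper's surrounding discussion, which restricts the contactization correspondence to contact Lie algebras with nontrivial center. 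All the remaining computations (the $\xi$-component of Jacobi in (i) being $-(d_{\h}\omega)(x,y,z)\xi$, the identification $d_{\g}\eta=-\omega$, nondegeneracy via the volume form, and closedness of $\omega$ via $\eta([\xi,z]_{\g})=0$ and Jacobi in $\g$) check out.
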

    \indent Succinctly, the process of contactization is just a $1$-dimensional central extension by a symplectic $2$-cocycle. This correspondence is a well established result of homological algebra. We employ the notation $\g := \R \xi \oplus_{\omega} \h$.
\begin{corollary} \cite[Proposition 4]{AFV} \label{prop: isomorphism when center}
    Two contact Lie algebras $(\g_1, \eta_1)$ and $(\g_2, \eta_2)$ with nontrivial centers are isomorphic as contact Lie algebras if and only if $(\ker \eta_1, - d \eta_1 \vert_{\ker \eta_1})$ and $(\ker \eta_2, - d \eta_2 \vert_{\ker \eta_2})$ are isomorphic as symplectic Lie algebras.
\end{corollary}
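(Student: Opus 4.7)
The plan is to chase the definitions on both sides, using the fact that the center of a contact Lie algebra with nontrivial center is exactly $\R \xi$ (Proposition \ref{prop: center in contact Lie algebras}), so any contact Lie algebra isomorphism must pin down the Reeb vectors, and from there the symplectic data on the kernels is preserved tautologically via the contactization bracket \eqref{eq: contactization bracket}.

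For the forward implication, suppose $\phi\colon \g_1 \to \g_2$ is a Lie algebra isomorphism with $\phi^* \eta_2 = \eta_1$. Because $\phi$ is an isomorphism of Lie algebras, $\phi(\z(\g_1)) = \z(\g_2)$; by Proposition \ref{prop: center in contact Lie algebras}, this gives $\phi(\xi_1) = c \xi_2$ for some $c \in \R^\times$. Evaluating, $1 = \eta_1(\xi_1) = (\phi^* \eta_2)(\xi_1) = \eta_2(c \xi_2) = c$, so $\phi(\xi_1) = \xi_2$. Next, $\phi$ restricts to a linear isomorphism $\psi\colon \ker \eta_1 \to \ker \eta_2$, since $\eta_2 \circ \phi = \eta_1$. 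Differentiating, $\phi^*(d\eta_2) = d\eta_1$, so $\psi^*\omega_2 = \omega_1$ where $\omega_i := -d\eta_i\vert_{\ker \eta_i}$. It only remains to check that $\psi$ respects the induced brackets $[\cdot,\cdot]_{\h_i}$. For $x,y \in \ker \eta_1$, applying $\phi$ to \eqref{eq: contactization bracket} yields
\begin{align*}
    \phi([x,y]_{\g_1}) = \omega_1(x,y)\, \xi_2 + \phi([x,y]_{\h_1}),
\end{align*}
while the identity on the $\g_2$ side gives $[\psi(x),\psi(y)]_{\g_2} = \omega_2(\psi(x),\psi(y)) \xi_2 + [\psi(x),\psi(y)]_{\h_2}$. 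Since $\psi^*\omega_2 = \omega_1$, equating the two expressions and comparing the $\R \xi_2$-component against the $\ker \eta_2$-component forces $\psi([x,y]_{\h_1}) = [\psi(x),\psi(y)]_{\h_2}$.

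For the converse, given a symplectic Lie algebra isomorphism $\psi\colon (\ker \eta_1, \omega_1) \to (\ker \eta_2, \omega_2)$, define $\phi\colon \g_1 \to \g_2$ by $\phi(\xi_1) := \xi_2$ and $\phi\vert_{\ker \eta_1} := \psi$. Linear bijectivity is clear. That $\phi$ is a Lie algebra homomorphism follows by checking the four cases on a basis adapted to the splittings $\g_i = \R \xi_i \oplus \ker \eta_i$: the bracket $[\xi_i, \cdot]_{\g_i}$ vanishes by \eqref{eq: contactization bracket}, and on the $\ker \eta_1$-part the identity $\phi([x,y]_{\g_1}) = [\phi(x),\phi(y)]_{\g_2}$ holds because $\psi^*\omega_2 = \omega_1$ handles the central component while $\psi$ being a Lie algebra homomorphism handles the $\ker \eta_2$ component. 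Finally, $\phi^* \eta_2 = \eta_1$ by construction of $\phi$ and the definitions of $\eta_i$.

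The proof is essentially book-keeping, and the only point requiring genuine input is the observation that $\phi$ must fix the Reeb vector (up to the normalization $\eta_i(\xi_i) = 1$), which is where the nontrivial-center hypothesis enters decisively via Proposition \ref{prop: center in contact Lie algebras}. Without that hypothesis, one would have no canonical way to single out $\xi_i$ inside $\g_i$, and the whole reduction to the kernel would fail; this is the only conceptual obstacle in an otherwise routine argument.
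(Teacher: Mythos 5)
Your proof is correct and follows essentially the same route as the paper's own argument: use that a contact Lie algebra isomorphism preserves the ($1$-dimensional) center to pin down the Reeb vectors, restrict to the kernels, and match the central and horizontal components of the contactization bracket \eqref{eq: contactization bracket} in both directions. The only quibble is your closing remark: the Reeb vector is preserved by \emph{any} contact isomorphism (since $\iota_{\phi(\xi_1)}\eta_2=1$ and $\iota_{\phi(\xi_1)}d\eta_2=0$ already characterize $\xi_2$), so the nontrivial-center hypothesis is really needed to make $\ker\eta_i$ a Lie subalgebra with the stated bracket, not to single out $\xi_i$.
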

\begin{remark} \label{obs: iff unimod, nil, solv}
    Let $(\g,\eta)$ be a contact Lie algebra arising via contactization from a symplectic Lie algebra $(\h,\omega)$. Cartan's criterion shows that $\g$ is solvable if and only if $\h$ is solvable. Since $\xi$ is central in $\g$, it follows that $\ad_{\xi}^{\g} = 0$ and 
\begin{align*}
    \ad_x^{\g} = 
    \left[
	\begin{array}{c | c}      
		0 & * \\
        \hline
        0 & \ad_x^{\h}
	\end{array} \right] 
\end{align*}
    \noindent for all $x \in \h$ viewed also as an element of $\g$. In particular, $\g$ is unimodular if and only if $\h$ is unimodular and $\g$ is completely solvable if and only if $\h$ is completely solvable; also, according to Engel's theorem, $\g$ is nilpotent if and only if $\h$ is nilpotent. Imposing $\g$ to be unimodular then forces $\h$ to be unimodular; since $\h$ is also symplectic and thus solvable following Proposition \ref{prop: chu}, we obtain that $\g$ must be solvable as well.
\end{remark}
 
    \indent An analogous construction to that mentioned in Section \ref{section: the lefschetz condition} ensures that any contact Lie algebra $(\g, \eta)$, whatever its center, admits a compatible contact metric structure $(\eta, g, \Phi)$. The formulas furnishing the compatibility are, of course, similar to those appearing before. There are analogues to Propositions \ref{prop: K-contact equivalences} and \ref{prop: Sasaki equivalences}, and thus the definitions of $K$-contact Lie algebras and Sasakian Lie algebras are clear. Recalling that  
\begin{align*}
    \mathcal{L}_{\xi} \Phi = [\ad_{\xi}, \Phi], \quad (\mathcal{L}_{\xi} g)(x,y) = g(\ad_{\xi} x,y) + g(x, \ad_{\xi} y) \text{ for all $x$, $y \in \g$}, 
\end{align*}
    \noindent where $\mathcal{L}_{\xi}$ denotes the Lie derivative with respect to $\xi$, we see that there are more equivalent statements for the $K$-contact condition in the Lie-theoretic version of Proposition \ref{prop: K-contact equivalences}. In order to state them, denote by $\nabla$ the Levi-Civita connection associated to the inner product $g$ on $\g$. 
\begin{proposition} \label{prop: $K$-contact equivalences, 2nd version}
    The following conditions are equivalent:
\begin{multicols}{2}
\begin{enumerate} [\rm (i)]
    \item $\mathcal{L}_{\xi} g = 0$. 
    \item $\mathcal{L}_{\xi} \Phi = 0$. 
    \item $\ad_{\xi}$ is skew symmetric with respect to $g$.
    \item $\ad_{\xi}$ and $\Phi$ commute. 
    \item $\ad_{\xi} \circ \Phi$ is symmetric with respect to $g$.
    \item $\Phi x = - \nabla_x \xi$ for all $x \in \g$. 
\end{enumerate}    
\end{multicols}
\end{proposition}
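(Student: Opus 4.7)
The plan is to use the two identities
\[
    \mathcal{L}_{\xi} \Phi = [\ad_{\xi}, \Phi], \qquad (\mathcal{L}_{\xi} g)(x,y) = g(\ad_{\xi} x, y) + g(x, \ad_{\xi} y),
\]
recalled immediately before the statement, to directly translate the tensorial conditions (i) and (ii) into the adjoint-theoretic conditions (iii) and (iv); then to use elementary linear algebra, together with two basic facts of contact geometry, to weave (v) into the chain. The equivalences (i) $\Leftrightarrow$ (ii) $\Leftrightarrow$ (vi) will follow from Proposition \ref{prop: K-contact equivalences} applied to left-invariant tensors on the simply connected Lie group with Lie algebra $\g$, so no new content is needed for these three.

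First I would read (i) $\Leftrightarrow$ (iii) straight off the $\mathcal{L}_{\xi} g$ formula: bilinearity forces $\mathcal{L}_{\xi} g = 0$ precisely when $\ad_{\xi}^{*} = - \ad_{\xi}$ with respect to $g$. Similarly, (ii) $\Leftrightarrow$ (iv) is read off $\mathcal{L}_{\xi} \Phi = [\ad_{\xi}, \Phi]$: vanishing of the commutator is exactly commutation. At this point (i)--(iv) and (vi) are already pairwise equivalent.

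To insert (v), I would exploit the skew-symmetry of $\Phi$ with respect to $g$, which is immediate from $d\eta(x,y) = 2 g(x, \Phi y)$ being a $2$-form. Granting (iii) and (iv),
\[
    (\ad_{\xi} \circ \Phi)^{*} = \Phi^{*} \circ \ad_{\xi}^{*} = (-\Phi)\circ(-\ad_{\xi}) = \Phi \circ \ad_{\xi} = \ad_{\xi} \circ \Phi,
\]
which is (v). Conversely, granting (v), the same skew-symmetry of $\Phi$ yields $\Phi \circ (\ad_{\xi} + \ad_{\xi}^{*}) = 0$, so the image of the symmetric operator $\ad_{\xi} + \ad_{\xi}^{*}$ must lie in $\ker \Phi = \R \xi$.

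The step I expect to be the main obstacle is the remainder of (v) $\Rightarrow$ (iii): extracting full skewness of $\ad_{\xi}$ from mere symmetry of $\ad_{\xi} \circ \Phi$ requires pinning down the linear functional $\lambda \in \g^{*}$ defined by $(\ad_{\xi} + \ad_{\xi}^{*})(x) = \lambda(x)\xi$. Using symmetry of $\ad_{\xi} + \ad_{\xi}^{*}$ should force $\lambda$ to be a scalar multiple of $\eta$, and then evaluating at $\xi$ together with $\ad_{\xi} \xi = 0$ and $\im(\ad_{\xi}) \subseteq \ker \eta$ (noted right after the definition of contact Lie algebra) forces that scalar to vanish. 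Once (iii) is recovered, rearranging the symmetry relation in (v) with $\ad_{\xi}$ skew immediately returns (iv), closing the circle.
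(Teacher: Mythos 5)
The paper states this proposition without any proof, so there is no argument of the authors' to compare yours against; judged on its own terms, your proposal handles five of the six items correctly but fails at exactly the step you flagged as the main obstacle. The equivalences (i) $\Leftrightarrow$ (iii) and (ii) $\Leftrightarrow$ (iv) read off the displayed formulas for $\mathcal{L}_{\xi}g$ and $\mathcal{L}_{\xi}\Phi$, the appeal to Proposition \ref{prop: K-contact equivalences} for (vi), and the implication (iii)$\,+\,$(iv) $\Rightarrow$ (v) are all sound (in particular $\Phi^{*}=-\Phi$ does follow from $d\eta = 2g(\cdot,\Phi\,\cdot)$ being alternating).

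The gap is in (v) $\Rightarrow$ (iii). Unwinding (v) with $\Phi^{*}=-\Phi$ gives $\ad_{\xi}\circ\Phi = (\ad_{\xi}\circ\Phi)^{*} = -\Phi\circ\ad_{\xi}^{*}$, that is, $\ad_{\xi}\circ\Phi + \Phi\circ\ad_{\xi}^{*}=0$. This is \emph{not} the identity $\Phi\circ(\ad_{\xi}+\ad_{\xi}^{*})=0$ that you assert: the two differ by the commutator $[\ad_{\xi},\Phi]$, and discarding that term presupposes (iv), which is not available at this stage. The rest of your argument (forcing $\lambda$ to be a multiple of $\eta$ and then evaluating at $\xi$) is fine but rests on this false premise. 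Worse, the implication cannot be repaired by linear algebra: splitting $\ad_{\xi}=A+B$ into its $g$-skew and $g$-symmetric parts, condition (v) becomes $[A,\Phi] = -(B\Phi+\Phi B)$, which has solutions with $B\neq 0$. Concretely, take $\g=\mathfrak{sl}(2,\R)=\Span\{\xi,e_1,e_2\}$ with $[\xi,e_1]=e_1$, $[\xi,e_2]=-e_2$, $[e_1,e_2]=\xi$, $\eta=\xi^{*}$, $g$ determined by $g(\xi,\xi)=1$, $g(e_i,e_j)=\tfrac12\delta_{ij}$, $g(\xi,e_i)=0$, and $\Phi\xi=0$, $\Phi e_1=e_2$, $\Phi e_2=-e_1$; one checks that $(\eta,g,\Phi)$ is a metric contact structure, that $\ad_{\xi}\circ\Phi$ (sending $e_1\mapsto -e_2$, $e_2\mapsto -e_1$) is $g$-symmetric, yet $\ad_{\xi}$ is $g$-symmetric and nonzero, hence not skew. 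So (v) holds while (i)--(iv) and (vi) all fail. This indicates that item (v) of the proposition itself needs to be revisited (the issue is invisible in the paper's applications, where $\z(\g)\neq 0$ forces $\ad_{\xi}=0$ and all six conditions hold trivially), and in any case your derivation of (v) $\Rightarrow$ (iii) does not go through as written.
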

    \indent In particular, both $\ker \ad_{\xi}$ and $\im \ad_{\xi}$ are $\Phi$-invariant subspaces. There appears to be no new equivalent statements for the Sasakian condition in the Lie-theoretic version of Proposition \ref{prop: Sasaki equivalences}.  
\begin{remark} \label{obs: rather trivial}
    Contact Lie algebras $(\g,\eta)$ with nontrivial center satisfy $\ad_{\xi} = 0$, as Proposition \ref{prop: center in contact Lie algebras} shows, and therefore are trivially $K$-contact: any compatible metric does the job.    
\end{remark} 
\indent The rather trivial Remark \ref{obs: rather trivial} plays a role in the statement of Corollary \ref{cor: cagliari} below, which generalizes the results found in \cite{Cagliari 1}. 

    \indent It is easy to characterize both Sasakian and $K$-contact Lie algebras with nontrivial center. They are in correspondence with Kähler and almost Kähler Lie algebras, respectively. 
\begin{proposition} \cite[Corollary 3]{AFV} \cite[Theorem 3.6]{CF} \label{prop: center = (Sasaki = Kahler)}
    Let $(\g,\eta)$ be a contact Lie algebra arising as the contactization of a symplectic Lie algebra $(\h, \omega)$. Then $(\g, \eta)$ is $K$-contact if and only if $(\h, \omega)$ is almost Kähler, and $(\g, \eta)$ is Sasakian if and only if $(\h, \omega)$ is Kähler. 
\end{proposition}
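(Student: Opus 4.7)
The plan is to exploit the vector-space splitting $\g = \R \xi \oplus \h$ of the contactization to set up a bijection between compatible metric contact structures on $(\g, \eta)$ and compatible almost Hermitian structures on $(\h, \omega)$. The defining relations $\eta = \iota_\xi g$ and $\Phi^2 = -\Id + \eta \otimes \xi$ force $g(\xi, \xi) = 1$, $g(\xi, \h) = 0$, and $\Phi \xi = 0$, so any compatible metric contact structure $(\eta, g, \Phi)$ on $\g$ is determined by $g_\h := g|_\h$ and $J := \Phi|_\h$, and $J$ automatically satisfies $J^2 = -\Id_\h$. Evaluating the compatibility $d_\g \eta = 2 g(\cdot, \Phi\, \cdot)$ on $\h \times \h$ and using $d_\g \eta|_\h = -\omega$ shows that $(g_\h, J)$ is an almost Hermitian structure compatible with $\omega$; conversely, every such compatible pair on $\h$ lifts uniquely to a compatible metric contact structure on $(\g, \eta)$.

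For the $K$-contact equivalence, Remark \ref{obs: rather trivial} observes that contactizations satisfy $\ad_\xi = 0$; hence the skew-symmetry condition $g(\ad_\xi x, y) + g(x, \ad_\xi y) = 0$ holds trivially, so \emph{every} compatible metric contact structure on $(\g, \eta)$ is $K$-contact. On the other side, every symplectic Lie algebra admits compatible almost Kähler structures (choose any $\omega$-compatible almost complex structure $J$ and set $g_\h(x, y) := \omega(x, Jy)$), so both existence statements hold automatically and correspond to each other under the bijection above.

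For the Sasakian case, the core of the proof is a direct computation of the Nijenhuis tensor $N_\Phi$ using the contactization bracket $[x, y]_\g = \omega(x, y) \xi + [x, y]_\h$ for $x, y \in \h$, together with $\Phi \xi = 0$, $\Phi|_\h = J$, and $\ad_\xi = 0$. A straightforward expansion, using that $\eta$ vanishes on $\h$ and that $\Phi^2 \xi = 0$, yields
\begin{align*}
    N_\Phi(x, y) = N_J(x, y) + \omega(Jx, Jy)\, \xi \quad \text{for all } x, y \in \h,
\end{align*}
where $N_J$ is the Nijenhuis tensor of $J$ computed with the bracket of $\h$; the cases where $x$ or $y$ equals $\xi$ vanish trivially on both sides of the Sasakian identity because $\ad_\xi = 0$ and $\Phi \xi = 0$. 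The Sasakian requirement $N_\Phi(x, y) = -d_\g \eta(x, y)\, \xi = \omega(x, y)\, \xi$ then decomposes along the splitting $\h \oplus \R \xi$ into the two conditions $N_J = 0$ and $\omega(Jx, Jy) = \omega(x, y)$; the latter is automatic from $\omega$-compatibility of $J$, while the former is precisely the integrability of $J$. Together with the correspondence set up in the first paragraph, this establishes that $(\g, \eta)$ is Sasakian if and only if $(\h, \omega, g_\h, J)$ is Kähler.

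The main technical obstacle is the Nijenhuis computation itself: one must carefully separate the $\h$- and $\R \xi$-components arising from the cocycle term $\omega(x, y)\, \xi$ in the contactization bracket and verify the cancellations produced by $\Phi \xi = 0$ and $\Phi^2 \xi = 0$. Once this is executed cleanly and the Sasakian identity is decomposed along $\h \oplus \R \xi$, the equivalence follows immediately from the standard definition of a Kähler Lie algebra.
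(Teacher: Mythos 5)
The paper itself does not prove this proposition: it is imported verbatim from \cite[Corollary 3]{AFV} and \cite[Theorem 3.6]{CF}, so there is no in-paper proof to compare against. Your argument is correct and is essentially the standard proof of the cited result: the bijection $(\eta,g,\Phi)\leftrightarrow(g_\h,J)$ forced by $\eta=\iota_\xi g$, $\Phi\xi=0$ and $\eta\circ\Phi=0$; the observation that $\ad_\xi=0$ makes every compatible metric contact structure $K$-contact (this is exactly the content of Remark \ref{obs: rather trivial}) while every symplectic Lie algebra admits a compatible almost complex structure by linear algebra; and the identity $N_\Phi(x,y)=N_J(x,y)+\omega(Jx,Jy)\,\xi$ for $x,y\in\h$, which checks out against the contactization bracket and decomposes the Sasakian identity into $J$-invariance of $\omega$ (automatic) plus $N_J=0$. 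The only cosmetic point is the normalization $d\eta=2g(\cdot,\Phi\,\cdot)$ used in Section \ref{section: the lefschetz condition}, which you suppress; it merely rescales $g_\h$ and affects none of the equivalences.
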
   
    \indent We now describe the standard Sasakian Lie algebra.
\begin{example} \label{ex: heisenberg is sasakian}
    Let $\h_{2n+1}$ be the Lie algebra spanned by $\{X_1, \ldots, X_n, Y_1, \ldots, Y_n, Z\}$, with Lie bracket given by
\begin{align*}
    [X_i, Y_i] = Z \text{ for all $1 \leq i \leq n$}.
\end{align*}
    \noindent $\h_{2n+1}$ is called the real $(2n+1)$-dimensional \textit{Heisenberg Lie algebra}. Let $g$ the inner product obtained by declaring the basis above orthonormal. Set $\xi := Z$, and let $\eta$ be the $1$-form dual to $\xi$ via $g$. Define $\Phi\colon \h_{2n+1} \to \h_{2n+1}$ by
\begin{align*}
    \Phi(Z) = 0, \quad \Phi(X_i) = Y_i, \quad \Phi(Y_i) = - X_i \quad \text{ for all $1 \leq i \leq n$}. 
\end{align*}
    \noindent It is straightforward to check that $(\eta, g, \Phi)$ is a compatible Sasakian structure on $\h_{2n+1}$. 
\end{example}
    \indent It turns out, there are no more Sasakian nilpotent Lie algebras than those described in Example \ref{ex: heisenberg is sasakian}.
\begin{proposition} \cite[Theorem 3.9]{AFV} \label{prop: for later use}
    The only contact nilpotent Lie algebras admitting a compatible Sasakian structure are the Heisenberg Lie algebras.
\end{proposition}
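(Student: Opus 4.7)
The plan is to reduce the problem to the classical statement that a nilpotent K\"ahler Lie algebra is abelian, via the contactization correspondence of Proposition \ref{prop: center = (Sasaki = Kahler)}. Since $\g$ is a nontrivial nilpotent Lie algebra, its center is nontrivial, and Proposition \ref{prop: center in contact Lie algebras} forces $\z(\g) = \R\xi$, where $\xi$ is the Reeb vector. Consequently, $(\g,\eta)$ arises as the contactization of the symplectic Lie algebra $(\h,\omega)$ with $\h = \ker\eta$ and $\omega = -d_\g\eta\vert_\h$, and Remark \ref{obs: iff unimod, nil, solv} ensures that $\h$ is itself nilpotent (hence unimodular).

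Assuming $(\g,\eta)$ admits a compatible Sasakian structure, Proposition \ref{prop: center = (Sasaki = Kahler)} implies that $(\h,\omega)$ carries a compatible K\"ahler structure. The standard symplectic Hodge theory applied at the Chevalley--Eilenberg level then shows that $(\h,\omega)$ satisfies the hard-Lefschetz condition, in particular it is $1$-Lefschetz. Applying condition (iii) of Theorem \ref{thm: Benson-Gordon 2}, we conclude that $\z(\h)\cap[\h,\h]=0$. However, in any nonabelian nilpotent Lie algebra, the last nonzero term of the lower central series lies in $\z(\h)\cap[\h,\h]$; this forces $[\h,\h]=0$, so $\h$ is abelian.

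To recover the Heisenberg algebra, pick a symplectic basis $\{X_1,\ldots,X_n,Y_1,\ldots,Y_n\}$ of $(\h,\omega)\cong(\R^{2n},\omega_0)$ with $\omega(X_i,Y_j)=\delta_{ij}$ and all other pairings zero. By equation \eqref{eq: contactization bracket}, the contactized bracket becomes $[X_i,Y_i]_\g=\xi$ with all other brackets among basis elements vanishing, so that $\g\cong\h_{2n+1}$, as in Example \ref{ex: heisenberg is sasakian}. The delicate step is the assertion that a nilpotent K\"ahler Lie algebra is abelian: the route above goes through Theorem \ref{thm: Benson-Gordon 2}, which is already in our toolbox, but one could equally well invoke the classical result of Hano at the Lie-algebra level.
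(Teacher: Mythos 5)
Your argument is correct and follows essentially the same route as the paper's: reduce via contactization and Proposition \ref{prop: center = (Sasaki = Kahler)} to showing that a nilpotent K\"ahler Lie algebra is abelian, then observe that a one-dimensional central extension of an abelian algebra by a symplectic cocycle is Heisenberg. The only cosmetic difference is that the paper cites Theorem \ref{thm: Benson-Gordon 1} directly for the abelianness step, whereas you rederive it from Theorem \ref{thm: Benson-Gordon 2}(iii) via the lower central series — which is exactly the derivation the paper itself sketches after stating Theorem \ref{thm: Benson-Gordon 1}.
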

    \indent In combination with Remark \ref{obs: rather trivial}, Proposition \ref{prop: for later use} guarantees that there are many $K$-contact Lie algebras that have no compatible Sasakian structure: any non-Heisenberg nilpotent contact Lie algebra gives an example. 

    \indent Certainly, the Sasakian structure on $\h_{2n+1}$ described in Example \ref{ex: heisenberg is sasakian} gives rise to a left-invariant Sasakian structure on the Heisenberg Lie group $H_{2n+1}$, the corresponding simply connected Lie group associated to $\h_{2n+1}$, and thus on every Heisenberg nilmanifold $\Gamma \backslash H_{2n+1}$. Of course, Proposition \ref{prop: for later use} implies that every nilmanifold endowed with an invariant contact form admits an invariant Sasakian structure if and only if it is a Heisenberg nilmanifold. Surprisingly, the same is true even without the further restriction to invariant contact structures. 
\begin{theorem} \cite[Theorem 1.1]{Cagliari 3} \label{thm: sasakian nilmanifolds}
    A nilmanifold admits a Sasakian structure (not necessarily invariant) if and only if it is a Heisenberg nilmanifold.
\end{theorem}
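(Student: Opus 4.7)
The ``if'' direction is immediate: by Example \ref{ex: heisenberg is sasakian}, the Heisenberg Lie algebra $\h_{2n+1}$ carries a left-invariant Sasakian structure, which descends to every Heisenberg nilmanifold $\Gamma \backslash H_{2n+1}$. The substance of the theorem is the converse, and its only genuinely new input beyond Proposition \ref{prop: for later use} is the handling of Sasakian structures that are not assumed to be left-invariant.

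For the converse, let $M = \Gamma \backslash N$ be a compact Sasakian nilmanifold equipped with a (possibly non-invariant) Sasakian structure $(\eta, g, \Phi)$. The plan is to combine Theorem \ref{thm: compact sasakian are hard-Lefschetz}, which guarantees that $M$ is hard-Lefschetz (and in particular $1$-Lefschetz), with Nomizu's theorem (Proposition \ref{prop: sometimes it is an isomorphism}(i)), so as to transfer cohomological information about $M$ into algebraic constraints on the nilpotent Lie algebra $\n$ of $N$. The ultimate goal is to force $\n \cong \h_{2n+1}$; once this is achieved, $N$ is isomorphic to $H_{2n+1}$ and $M$ is a Heisenberg nilmanifold.

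Concretely, I would invoke Sullivan's minimal model formalism. For the nilmanifold $M$ the minimal model is canonically the Chevalley--Eilenberg complex $(\alt^* \n^*, d_\n)$, while for a compact Sasakian manifold Tievsky's theorem gives a distinguished form of the minimal model built from the basic Kähler algebra of $M$ together with a single additional odd generator encoding $\eta$. Matching these two descriptions and imposing the hard-Lefschetz constraint should force the basic Kähler algebra of $M$ to be cohomologically as rigid as possible, and consequently $\n$ to be two-step nilpotent with one-dimensional commutator, that is, $\n \cong \h_{2n+1}$. A more hands-on alternative route is to exploit that the Reeb field $\xi$ is Killing, so that the closure of its flow is a torus $T$ acting by isometries on $M$, and to average the Sasakian data over $T$ in an attempt to reduce as closely as possible to the invariant setting handled by Proposition \ref{prop: for later use}.

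The main obstacle is precisely this bridging of the manifold-level Sasakian hypothesis with the left-invariant Chevalley--Eilenberg complex on $\n$: the Sasakian contact form need not come from any left-invariant $1$-form on $N$, so the hard-Lefschetz isomorphism on $M$ does not translate mechanically into a Lefschetz-type isomorphism at the Lie algebra level, and either the minimal model comparison or a delicate averaging procedure must bear the weight of the argument. Once $\n \cong \h_{2n+1}$ is in hand, the conclusion that $M$ is a Heisenberg nilmanifold is immediate; alternatively, if the argument only identifies the abstract isomorphism type of $\Gamma$ with a lattice in $H_{2n+1}$, Malcev's rigidity (Remark \ref{obs: heisenberg nilmanifolds}) upgrades this identification to a diffeomorphism, completing the proof.
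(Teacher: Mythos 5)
First, a point of orientation: the paper does not prove Theorem \ref{thm: sasakian nilmanifolds} at all; it is quoted verbatim from \cite{Cagliari 3} (with \cite{Kasuya3} offered as an alternative source), so there is no internal proof to measure your argument against. Your proposal must therefore stand on its own, and as written it does not: it is a plan whose decisive step is left open, as you yourself concede when you say that either the minimal model comparison or the averaging procedure ``must bear the weight of the argument.''

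The concrete gap is the transfer from the manifold to the Lie algebra. Your first route --- use Theorem \ref{thm: compact sasakian are hard-Lefschetz} to conclude that $M$ is ($1$-)Lefschetz, then Nomizu (Proposition \ref{prop: sometimes it is an isomorphism}(i)) to turn this into a constraint on $\n$ and reduce to the invariant case of Proposition \ref{prop: for later use} or Corollary \ref{cor: 1-lefschetz nilpotent} --- cannot work as stated, because the contact Lefschetz condition of Definition \ref{def: contact lefschetz condition} is not a property of the cohomology ring alone: the relation $\mathcal{R}_{\mathrm{Lef}_k}$ is built from the specific (non-invariant) form $\eta$, its Reeb field $\xi$, and the pointwise conditions $\iota_{\xi}\beta = 0$ and $L^{n-k+1}\beta = 0$ on representatives. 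Nomizu's isomorphism $H^*(\n) \cong H^*_{dR}(M)$ gives no mechanism for replacing these data by invariant ones, and the paper explicitly flags exactly this obstruction as the reason Theorem \ref{thm: odd benson-gordon} is proved only for invariant contact forms (``It is unclear as of now if the same result holds in the non-invariant case''). Your second route (matching a Tievsky-type minimal model against the Chevalley--Eilenberg complex) and your third (averaging over the torus closure of the Reeb flow) are indeed, in spirit, the strategies of \cite{Kasuya3} and \cite{Cagliari 3} respectively, but in your write-up they remain declarations of intent: no statement about basic cohomology is established, no model is actually compared, and averaging a Sasakian (as opposed to merely metric) structure over a torus action is a genuinely delicate point that is not addressed. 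The ``if'' direction via Example \ref{ex: heisenberg is sasakian} and the concluding appeal to Malcev rigidity (Remark \ref{obs: heisenberg nilmanifolds}) are fine, but the core of the theorem is not proved.
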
    
    \indent See also \cite[Theorem 8.2]{Kasuya3} for an alternative proof of Theorem \ref{thm: sasakian nilmanifolds}. 

    \indent Theorem \ref{thm: sasakian nilmanifolds} is the clear analogue in the contact setting of the characterization of Kähler nilmanifolds arising from Theorem \ref{thm: Benson-Gordon 1}. Just as in the symplectic case, there is also a characterization of general solvmanifolds admitting Sasakian structures (not necessarily invariant): they are precisely finite quotients of Heisenberg nilmanifolds; moreover, it is also known that a completely solvable solvmanifold admits Sasakian structures if and only if it is diffeomorphic to a Heisenberg nilmanifold (see \cite[Corollary 1.4, 1.5]{Kasuya2} for a proof of both claims). We mention in passing that a similar characterization holds in the context of compact aspherical Sasakian manifolds, taking into account the solvability class of the fundamental group of such a manifold (see \cite[Theorem 1.1, Corollary 1.2]{dNY}). 

    \indent It is then natural to explore whether Theorem \ref{thm: Benson-Gordon 1}, stated in regards to the Lefschetz condition, also holds for contact nilmanifolds. It is also reasonable to look for a characterization of contact $1$-Lefschetz solvmanifolds in some way analogous to Theorem \ref{thm: Benson-Gordon 2}. Seeking to answer the first question, the authors in \cite{Cagliari 1} describe two nilmanifolds endowed with invariant contact forms that are not $1$-Lefschetz (thus, neither Sasakian nor Heisenberg) and admit compatible $K$-contact structures. We show in Theorem \ref{thm: odd benson-gordon} and Corollary \ref{cor: cagliari} below that there is nothing unusual with these examples, and that the same is true for general nilmanifolds with invariant contact forms. To achieve this, we find a characterization of $1$-Lefschetz contact Lie algebras with nontrivial center together with a corresponding result for solvmanifolds (see Theorem \ref{thm: main, i guess} and Remark \ref{obs: main, i guess} below), which is the closest we can get to answering the second question. 
 
\section{The 1-Lefschetz condition on contact solvmanifolds}

\subsection{Cohomology remarks} \label{section: cohomology remarks}

\indent Let $\g$ be a real $n$-dimensional Lie algebra, and let $\n := [\g, \g]$ be its commutator subalgebra. Take $\a$ to be any vector subspace complement of $\n$ in $\g$, so that $\g = \a \oplus \n$. Write $\g^* = \a^* \oplus \n^*$, where
\begin{align*}
    \a^* := \{ \eta \in \g^* \mid \eta(\n) = 0 \}, \quad \n^* := \{ \eta \in \g^* \mid \eta(\a) = 0 \}. 
\end{align*}
\noindent Note that $\a^*$ is independent of the choice of $\a$. Set $l := \dim(\n)$ and $k := \dim(\a)$, and so $n = k+l$. Denote
\begin{align*}
    \alt^{i,j} \g^*:= \alt^i \a^* \otimes \alt^j \n^* \text{ for all $0 \leq i,j \leq n$};
\end{align*}
\noindent in particular, for all $0 \leq p \leq n$,
\begin{align*}
    \alt^p \g^* = \bigoplus_{i + j = p} \alt^{i,j} \g^*.
\end{align*}
\indent Recall that the standard cochain complex $(\alt^* \g^*, d_{\g})$ has exterior derivative $d_{\g}$ completely determined by the conditions
\begin{gather*}
    d_{\g}\theta(x,y) = - \theta([x,y]), \quad \text{for all $\theta \in \g^*$ and all $x$, $y \in \g$}, \\
    d_{\g} (\alpha \wedge \beta) = d_{\g} \alpha \wedge \beta + (-1)^{k} \alpha \wedge d_{\g} \beta, \quad \text{for all } \alpha\in\alt^k\g^*, \, \beta \in \alt^* \g^*.
\end{gather*}
\noindent We write $d$ instead of $d_{\g}$ when there is no risk of confusion. Since the differentials on cochain complexes are essentially dual to the Lie brackets, it readily follows that Lie algebra morphisms are precisely those inducing morphisms of cochain complexes, and viceversa. 
\begin{lemma} \label{lemma: equivalencia para morfismos}
    A linear map $f\colon\g \to \h$ between two Lie algebras $\g$ and $\h$ is a Lie algebra morphism if and only if the induced map $f^*\colon \alt^* \h^* \to \alt^* \g^*$ satisfies $d_{\g} \circ f^* = f^* \circ d_{\h}$.
\end{lemma}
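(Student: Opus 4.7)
The plan is to reduce the entire claim to the degree-one case, using the fact that both $f^*$ (as an exterior algebra homomorphism) and the Chevalley–Eilenberg differentials (via the graded Leibniz rule) are determined by their behavior on $\alt^0 \g^*$ and $\alt^1 \g^*$. Since $d$ acts trivially on degree zero and $f^*$ is the identity there, everything will hinge on degree one, where one has the explicit formula $d\theta(x,y) = -\theta([x,y])$.

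First, I would compute both sides on an arbitrary $\theta \in \h^*$ and pair with $x,y \in \g$. Directly,
\begin{align*}
    (d_{\g} f^*\theta)(x,y) = -(f^*\theta)([x,y]_{\g}) = -\theta(f([x,y]_{\g})),
\end{align*}
while
\begin{align*}
    (f^* d_{\h}\theta)(x,y) = (d_{\h}\theta)(f(x),f(y)) = -\theta([f(x),f(y)]_{\h}).
\end{align*}
These expressions agree for every $\theta \in \h^*$ and every $x,y \in \g$ if and only if $f([x,y]_{\g}) = [f(x),f(y)]_{\h}$ for all $x,y \in \g$, since $\h^*$ separates points of $\h$. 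This already gives both implications restricted to degree one and also shows the nontrivial direction of the backward implication.

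Second, assuming $f$ is a Lie algebra morphism, I would extend the identity $d_{\g} \circ f^* = f^* \circ d_{\h}$ to all of $\alt^* \h^*$ by induction on the degree. The base cases are degree zero (both sides vanish) and degree one (just proved). For the inductive step, any element of $\alt^{k+1}\h^*$ is a sum of wedges $\alpha \wedge \beta$ with $\alpha \in \alt^1 \h^*$ and $\beta \in \alt^k \h^*$, and one uses that $f^*$ preserves wedge products together with the graded Leibniz rule for $d_{\g}$ and $d_{\h}$:
\begin{align*}
    d_{\g}(f^*\alpha \wedge f^*\beta) = d_{\g}(f^*\alpha) \wedge f^*\beta - f^*\alpha \wedge d_{\g}(f^*\beta),
\end{align*}
and similarly on the $\h$ side before applying $f^*$. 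The inductive hypothesis replaces the two occurrences $d_{\g}f^*\alpha$ and $d_{\g}f^*\beta$ by $f^*d_{\h}\alpha$ and $f^*d_{\h}\beta$, and refactoring gives $f^*(d_{\h}(\alpha \wedge \beta))$.

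Conversely, if the commutation holds on all of $\alt^*\h^*$, it holds in particular on $\alt^1\h^*$, and the degree-one computation above forces $f$ to be a Lie algebra morphism. There is no real obstacle here; the only mild subtlety is keeping straight that $f^*$ is contravariant and an algebra map for $\wedge$, so that the Leibniz induction goes through cleanly.
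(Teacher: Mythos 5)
Your proposal is correct and follows essentially the same route as the paper: a direct computation on $1$-forms showing that $d_{\g}f^*\theta = f^*d_{\h}\theta$ for all $\theta \in \h^*$ is equivalent to $f$ preserving brackets, followed by the observation that $1$-forms generate $\alt^*\h^*$ and both differentials are graded derivations, so the identity propagates to all degrees. Your explicit Leibniz induction just spells out what the paper leaves implicit.
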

\indent This is the setting in which Theorem \ref{thm: Benson-Gordon 2} is proven in \cite{BG2}. We now rederive some preliminary results from there, as they are useful for our purposes. For all $0 \leq k \leq n$, denote by $b_k(\g)$ the $k$-th Betti number of $\g$; that is, $b_k(\g) := \dim H^k(\g)$. The following result is clear.  
\begin{lemma} \label{lemma: H1 is easy} 
   \phantom{.} 
\begin{multicols}{2}
\begin{enumerate} [\rm (i)]
    \item $d( \alt^{1,0} \g^*) = 0$.
    \item $d:\alt^{0,1} \g^* \to \alt^2 \g^*$ is injective. 
    \item $H^1(\g)$ is identified with $\alt^{1,0} \g^*$. 
    \item $b_1(\g) = \dim \a$. 
\end{enumerate}    
\end{multicols}
\end{lemma}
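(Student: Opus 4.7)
The plan is to exploit the definitions of $\a^*$ and $\n^*$ together with the explicit formula $d\alpha(x,y) = -\alpha([x,y])$ for $\alpha \in \g^*$, and then combine (i) and (ii) to extract $H^1(\g)$ directly.

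For (i), I would take $\alpha \in \alt^{1,0} \g^* = \a^*$, which by definition vanishes on $\n = [\g,\g]$. Then for any $x, y \in \g$, the element $[x,y]$ lies in $\n$, so $d\alpha(x,y) = -\alpha([x,y]) = 0$. Since $\alt^{1,0} \g^*$ is generated as a graded piece by elements of $\a^*$, this yields $d(\alt^{1,0} \g^*) = 0$.

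For (ii), I would take $\alpha \in \alt^{0,1} \g^* = \n^*$ with $d\alpha = 0$ and show $\alpha = 0$. The condition $d\alpha = 0$ translates to $\alpha([x,y]) = 0$ for all $x, y \in \g$, i.e., $\alpha$ vanishes on $\n$. But by definition of $\n^*$, $\alpha$ also vanishes on $\a$. Since $\g = \a \oplus \n$, this forces $\alpha = 0$, so the restriction of $d$ to $\alt^{0,1} \g^*$ is injective.

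For (iii) and (iv), I would note that $H^1(\g)$ equals $Z^1(\g) = \ker(d\colon \g^* \to \alt^2 \g^*)$ since $B^1(\g) = 0$ (there are no $(-1)$-cochains). Decomposing $\g^* = \a^* \oplus \n^*$ and using that $d$ respects this decomposition only up to mixing into $\alt^2 \g^*$, I would split a generic closed $1$-form $\alpha = \alpha_{\a} + \alpha_{\n}$ with $\alpha_{\a} \in \a^*$ and $\alpha_{\n} \in \n^*$. By (i), $d\alpha_{\a} = 0$, so $d\alpha = d\alpha_{\n} = 0$; by (ii), this forces $\alpha_{\n} = 0$. Hence $\ker(d\vert_{\g^*}) = \a^* = \alt^{1,0} \g^*$, proving (iii); taking dimensions gives $b_1(\g) = \dim \a^* = \dim \a$, which is (iv).

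There is no real obstacle here, since the result is essentially a direct unpacking of the Chevalley--Eilenberg differential in degree one together with the defining property $\n = [\g,\g]$. The only mild point worth being explicit about is that the decomposition $\g^* = \a^* \oplus \n^*$ is compatible with extracting $1$-cocycles in the sense used above; this comes down to the fact that $\a^*$ is canonically identified with $(\g/\n)^*$ regardless of the choice of complement $\a$, so the identification $H^1(\g) \cong \alt^{1,0} \g^*$ is canonical even though $\a$ itself is not.
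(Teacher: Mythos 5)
Your proof is correct and is exactly the intended argument: the paper states this lemma without proof (``The following result is clear''), and the direct unpacking of $d\alpha(x,y)=-\alpha([x,y])$ together with the definitions of $\a^*$ and $\n^*$ is precisely what makes it clear. Your closing remark that $\a^*$ is canonically $(\g/\n)^*$ independently of the choice of complement is a nice touch that the paper also notes separately.
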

\indent By Lemma \ref{lemma: H1 is easy}(i) and (ii), the space of closed $1$-forms $Z^1(\g)$ is precisely $\alt^{1,0} \g^*$. Since there are no nonzero exact $1$-forms on $\g$, the identification between $H^1(\g)$ and $\alt^{1,0} \g^*$ in Lemma \ref{lemma: H1 is easy}(iii) is realized by the restriction of the canonical projection map $\alt^1 \g^* \to H^1(\g)$ to $Z^1(\g)$. 

\indent Recall that $\g$ is said to be unimodular if $\tr(\ad_x) = 0$ for any $x \in \g$. 
\begin{lemma} \label{lemma: H2n-1 is almost easy}
    Let $\Omega \in \alt^n \g^*$ be a $n$-form on $\g$.
\begin{enumerate} [\rm (i)]
    \item For all $\lambda \in \alt^{n-1} \g^*$ there exists a unique $x \in \g$ such that $\lambda = \iota_x \Omega$.
    \item If $\lambda \in \alt^{n-1} \g^*$ is written as $\lambda = \iota_x \Omega$ then $d \lambda = - \tr( \ad_x ) \Omega$. 
    \item $\g$ is unimodular if and only if $d\colon \alt^{n-1} \g^* \to \alt^n \g^*$ is the zero map. 
    \item $\g$ is unimodular if and only if $H^n(\g)$ is nonzero. 
\end{enumerate}  
\end{lemma}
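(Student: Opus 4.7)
The plan is to establish the four parts in order, treating $\Omega$ as a volume form (a nonzero element of the one-dimensional space $\alt^n \g^*$), since otherwise (i) would fail for any nonzero $\lambda$. For (i), I would consider the linear map $\Psi\colon \g \to \alt^{n-1} \g^*$ defined by $\Psi(x) = \iota_x \Omega$. Both sides are $n$-dimensional, so it is enough to show $\Psi$ is injective. Fixing a basis $e_1,\ldots,e_n$ of $\g$ with dual basis $e^1,\ldots,e^n$ and writing $\Omega = c \, e^1 \wedge \cdots \wedge e^n$ with $c \neq 0$, a direct expansion of the contraction yields $\Psi\!\left(\sum_i x_i e_i\right) = c \sum_i (-1)^{i-1} x_i \, e^1 \wedge \cdots \wedge \widehat{e^i} \wedge \cdots \wedge e^n$, which vanishes only when every $x_i = 0$.

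For (ii), I would invoke Cartan's magic formula $\mathcal{L}_x = d \iota_x + \iota_x d$, valid on $\alt^* \g^*$ for each $x \in \g$. Because $\Omega \in \alt^n \g^*$ is automatically closed (the target $\alt^{n+1}\g^*$ is trivial), this reduces to $d\iota_x \Omega = \mathcal{L}_x \Omega$. I would then compute $\mathcal{L}_x \Omega$ on the basis $(e_1,\ldots,e_n)$ via the standard derivation identity
\begin{equation*}
(\mathcal{L}_x \omega)(v_1,\ldots,v_n) = -\sum_{i=1}^n \omega(v_1,\ldots, \ad_x v_i, \ldots, v_n),
\end{equation*}
which immediately gives $(\mathcal{L}_x \Omega)(e_1,\ldots,e_n) = -\tr(\ad_x)\,\Omega(e_1,\ldots,e_n)$, so $d\iota_x \Omega = -\tr(\ad_x) \Omega$.

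Parts (iii) and (iv) then follow formally. For (iii), combining (i) and (ii) shows that every $(n-1)$-form is uniquely of the form $\iota_x \Omega$ and satisfies $d(\iota_x \Omega) = -\tr(\ad_x)\Omega$; hence $d$ vanishes on $\alt^{n-1}\g^*$ if and only if $\tr(\ad_x) = 0$ for every $x \in \g$, i.e., iff $\g$ is unimodular. For (iv), since $\alt^n \g^*$ is one-dimensional, every top form is closed and $H^n(\g) = \alt^n\g^* / d(\alt^{n-1}\g^*)$; non-surjectivity of $d$ into a one-dimensional space is equivalent to $d$ being identically zero, so $H^n(\g) \neq 0$ if and only if the differential in degree $n-1$ vanishes, which by (iii) is equivalent to unimodularity.

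The only real point of caution is tracking the sign conventions in Cartan's magic formula and in the Lie-derivative identity on $\alt^* \g^*$; beyond that the argument reduces to pure linear algebra together with the one-dimensionality of $\alt^n \g^*$, so no serious obstacle is expected.
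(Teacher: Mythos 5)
Your proposal is correct and follows the same overall structure as the paper: (i) by linear algebra, (ii) by a computation, and (iii), (iv) as formal consequences of (ii) together with the one-dimensionality of $\alt^n\g^*$. The only divergence is in part (ii): the paper omits the verification, describing it as ``a straightforward computation in terms of a basis of $\g$'' (its suppressed version expands $d\lambda(x_1,\ldots,x_n)$ directly from the Chevalley--Eilenberg formula with $x_1=x$), whereas you invoke Cartan's magic formula $\mathcal{L}_x = d\iota_x + \iota_x d$ on the Chevalley--Eilenberg complex and use that $\Omega$ is automatically closed, so that $d\iota_x\Omega = \mathcal{L}_x\Omega = -\tr(\ad_x)\Omega$. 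Both routes are valid and of comparable length; yours is arguably more conceptual and less sign-error-prone, at the cost of having to know (or check) that Cartan's formula holds for the algebraic interior product and coadjoint Lie derivative, which is standard. Your explicit remark that $\Omega$ must be nonzero for (i) to hold is a point the statement leaves implicit (the paper later calls $\Omega$ a ``volume cochain''), and it is worth making.
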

\begin{proof}
    Notice that (i) is immediate; moreover, (iii) follows directly from (ii), and (iv) follows directly from (iii) since $H^n(\g)$ is either trivial or $1$-dimensional. The verification of (ii) results from a straightforward computation in terms of a basis of $\g$, so we omit it. \qedhere 

\end{proof}
\indent The cohomology of unimodular Lie algebras exhibits Poincaré duality. Indeed, after choosing a $n$-form $\Omega$ on $\g$, Lemma \ref{lemma: H2n-1 is almost easy}(iv) allows for an identification $H^{n}(\g) \cong \R [\Omega]$. Poincaré duality then amounts to the fact that, for all $0 \leq k \leq n$, the wedge product induces a \textit{non-degenerate} bilinear pairing
\begin{gather*}
    H^k(\g) \times H^{n-k}(\g) \to \R, 
\end{gather*}
\noindent resulting from the composition
\begin{align*}
    [(\alpha_1], [\alpha_2]) \mapsto [\alpha_1 \wedge \alpha_2] = c \, [\Omega] \mapsto c.  
\end{align*}
\noindent In fact, the nondegeneracy of one of these pairings is enough to ensure the nondegeneracy of all of them. We refer to \cite[Chapter 1, Section 3, page 27]{Fuks} for details; see also the discussion at the end of this section. As an immediate consequence, the Betti numbers satisfy $b_k(\g) = b_{n-k}(\g)$ for all $0 \leq k \leq n$.  

\indent The next result is established in \cite{BG2} through a slightly different, computation-oriented proof.
\begin{lemma} \label{lemma: H2n-1 is easy for unimodular}
    Let $\g$ be unimodular, and pick any volume cochain $\Omega \in \alt^n \g^*$. 
\begin{multicols}{2}
\begin{enumerate} [\rm (i)]
    \item $d(\alt^{k-1,l} \g^*) = 0$ and $d( \alt^{k,l-1} \g^*) = 0$. 
    \item Every element of $\alt^{k, l-1} \g^*$ is exact.  
    \item $H^{n-1}(\g)$ is identified with $\alt^{k-1,l} \g^*$.
    \item $H^{n-1}(\g)$ is identified with $\{ \iota_x \Omega \mid X \in \a \}$. 
    \item $b_{n-1}(\g) = \dim \a$. 
\end{enumerate}     
\end{multicols}
\end{lemma}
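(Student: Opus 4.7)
My plan is to combine the bigrading $\alt^p\g^* = \bigoplus_{i+j=p}\alt^{i,j}\g^*$ with Poincar\'e duality for unimodular Lie algebras, which was recalled just before the statement. Since $\dim\a^* = k$ and $\dim\n^* = l$, the only bidegrees $(i,j)$ with $i+j = n-1$, $i\leq k$ and $j\leq l$ are $(k-1,l)$ and $(k,l-1)$, so $\alt^{n-1}\g^* = \alt^{k-1,l}\g^*\oplus\alt^{k,l-1}\g^*$. Part (i) is then immediate from Lemma \ref{lemma: H2n-1 is almost easy}(iii), which asserts that unimodularity forces $d\colon\alt^{n-1}\g^*\to\alt^n\g^*$ to vanish identically.

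For (ii), rather than producing an explicit primitive for each element of $\alt^{k,l-1}\g^*$, I would argue by Poincar\'e duality. By Lemma \ref{lemma: H1 is easy}(iii), $H^1(\g)$ is identified with $\alt^{1,0}\g^*$, so a class $[\omega]\in H^{n-1}(\g)$ vanishes precisely when $\omega\wedge\theta$ is cohomologous to zero in $H^n(\g)$ for every $\theta\in\alt^{1,0}\g^*$. For $\omega\in\alt^{k,l-1}\g^*$ and $\theta\in\alt^{1,0}\g^*$ one has $\omega\wedge\theta\in\alt^{k+1,l-1}\g^* = 0$, since $\alt^{k+1}\a^* = 0$; in particular the class of $\omega\wedge\theta$ vanishes. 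Thus $[\omega] = 0$ and $\omega$ is exact.

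Parts (iii) and (v) follow from a straightforward dimension count. Poincar\'e duality combined with Lemma \ref{lemma: H1 is easy}(iv) gives $b_{n-1}(\g) = b_1(\g) = k = \dim\a$, which is (v). Because (i) yields $Z^{n-1}(\g) = \alt^{n-1}\g^*$ of dimension $k+l$, we deduce $\dim B^{n-1}(\g) = l$; together with (ii) and the equality $\dim\alt^{k,l-1}\g^* = l$, this forces $B^{n-1}(\g) = \alt^{k,l-1}\g^*$. The canonical projection then induces an isomorphism $\alt^{k-1,l}\g^* \cong H^{n-1}(\g)$, establishing (iii).

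For (iv), I would fix an adapted basis $\{e_1,\ldots,e_k\}$ of $\a$ and $\{e_{k+1},\ldots,e_n\}$ of $\n$ with dual basis $\{e^1,\ldots,e^n\}$, and take $\Omega = e^1\wedge\cdots\wedge e^n$. A direct computation shows that $\iota_{e_i}\Omega = (-1)^{i-1}e^1\wedge\cdots\widehat{e^i}\cdots\wedge e^n$ lies in $\alt^{k-1,l}\g^*$ exactly when $i\leq k$, i.e.\ when $e_i\in\a$; combined with (iii) this gives the stated identification. The only step requiring real insight is (ii), where Poincar\'e duality must be invoked on the cohomology pairing rather than manipulated directly at the level of forms; once this is done, the rest is essentially bookkeeping with the bigrading.
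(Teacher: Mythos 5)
Your proof is correct, and it uses the same ingredients as the paper: the bigraded decomposition $\alt^{n-1}\g^* = \alt^{k-1,l}\g^*\oplus\alt^{k,l-1}\g^*$, the vanishing of $d$ on $\alt^{n-1}\g^*$ from Lemma \ref{lemma: H2n-1 is almost easy}(iii), Poincar\'e duality, and a dimension count via $b_{n-1}(\g)=b_1(\g)=\dim\a$. The one place where you diverge is the key step (ii): the paper attacks the \emph{other} summand, showing that no element of $\alt^{k-1,l}\g^*$ is exact, by observing that the form-level pairing $\alt^{1,0}\g^*\times\alt^{k-1,l}\g^*\to\alt^n\g^*$ is already non-degenerate (via $\sigma\wedge\iota_x\Omega=\Omega$), so an exact element would degenerate the cohomology pairing. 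You instead show directly that every $\omega\in\alt^{k,l-1}\g^*$ is exact, because $\omega\wedge\theta\in\alt^{k+1,l-1}\g^*=0$ for all $\theta\in\alt^{1,0}\g^*\cong H^1(\g)$, and then invoke non-degeneracy of the cohomology pairing to conclude $[\omega]=0$. These are dual formulations of the same Poincar\'e-duality argument; yours has the mild advantage that the relevant wedge products vanish identically as forms (no computation with $\iota_x\Omega$ needed for this step), while the paper's version makes the non-degeneracy of the surviving pairing explicit, which it reuses later. Both then recover (ii) and (iii) simultaneously from the dimension count, and your explicit basis computation for (iv) is a spelled-out version of what the paper attributes to Lemma \ref{lemma: H2n-1 is almost easy}(i).
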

\begin{proof}
    Notice that (i) follows from Lemma \ref{lemma: H2n-1 is almost easy}(iii), and (iv) follows from (iii) and Lemma \ref{lemma: H2n-1 is almost easy}(i). Although (v) also follows from (iii), it is convenient to view it as just a direct consequence of Lemma \ref{lemma: H1 is easy}(iv) together with Poincaré duality to avoid circularity in our argument. Now, the bilinear pairing arising from the composition
\begin{gather*}
    \alt^{1,0} \g^* \times \alt^{k-1,l} \g^* \to \alt^{k,l} \g^* = \alt^n \g^* \cong \R, \quad (\alpha_1, \alpha_2) \mapsto \alpha_1 \wedge \alpha_2 = c \, \Omega \mapsto c  
\end{gather*}
    is non-degenerate, essentially because for any $x \in \g$ and any $\sigma \in \g^*$ that is dual to $x$ (in the sense that $\sigma(x) = 1$) it holds that $\sigma \wedge \iota_x \Omega = \Omega$, since $\Omega$ is a top form. Recall that $H^1(\g)$ and $\alt^{1,0} \g^*$ are identified as per Lemma \ref{lemma: H1 is easy}(iii). We have already observed that any element of the set $\alt^{k-1,l} \g^*$ is closed, and so they define cohomology classes in $H^{n-1}$. The non-degeneracy of the pairing ensures that no element of $\alt^{k-1,l} \g^*$ is exact, for otherwise the induced pairing on cohomology would be degenerate, contradicting Poincaré duality. Since  $\dim \alt^{k-1,l} \g^* = \dim \a = b_{n-1}(\g)$ because of (v), (ii) and (iii) are thus established.
\end{proof}
\indent As with Lemma \ref{lemma: H1 is easy}(iii), the identification between $H^{n-1}(\g)$ and $\alt^{k-1,l} \g^*$ is done through the restriction of the canonical projection map $\alt^{n-1} \g^* \to H^{n-1}(\g)$ to $\alt^{k-1,l} \g^*$, which contains only closed non-exact forms according to Lemma \ref{lemma: H2n-1 is easy for unimodular}(i) and (ii). 


\subsection{The 1-Lefschetz condition for contact Lie algebras arising from contactization} \label{section: the 1-Lefschetz condition for contact Lie algebras arising from contactization}

\indent Throughout this section, let $(\g,\eta)$ be a contact Lie algebra with nontrivial center arising as the contactization of a symplectic Lie algebra $(\h, \omega)$. In symbols, $\g = \R \xi \oplus_{\omega} \h$. Set $\dim \g = 2n+1$ and $\dim \h = 2n$. The cochain complexes of $\g$ and $\h$ are denoted as
\begin{align*}
    (\alt^* \g^*, d_{\g}), \quad (\alt^* \h^*, d_{\h}),
\end{align*}
\noindent respectively. Recall from Section \ref{section: solvmanifolds and the contact Lefschetz condition.}  that, in this scenario, $\z(\g) = \R \xi$, the condition $\iota_{\xi} \eta = 1$ uniquely determines $\xi$, and $\omega = - d_{\g} \eta$. 

\indent Let $\pi\colon\g \to \h$ be the canonical projection, which is a Lie algebra morphism. According to Lemma \ref{lemma: equivalencia para morfismos}, the pullback $\pi^*\colon\alt^* \h^* \to \alt^* \g^*$ satisfies $d_{\g} \circ \pi^* = \pi^* \circ d_{\h}$, and so it induces a map in cohomology, denoted by the same name. Notice that $\pi^*$ preserves degrees, both at the level of forms and cohomology. Since $\pi$ is surjective, there exists a linear map $s\colon\h \to \g$ satisfying $\pi \circ s = \Id_{\h}$, and therefore inducing a right inverse $s^*\colon \alt^* \g^* \to \alt^* \h^*$ of $\pi^*$ at the level of forms. However, since $s$ is not generally a Lie algebra morphism, $\pi^*$ is not necessarily invertible on cohomology. As $\h$ is a subset of $\g$, a section $s$ is given by the inclusion $\h \hookrightarrow \g$. 

\indent Fix $0 \leq k \leq 2n$ and pick any $\alpha \in \alt^k \g^*$. There are unique forms $\alpha_L \in \alt^{k-1} \g^*$ and $\alpha_R \in \alt^k \g^*$, determined uniquely by the conditions $\iota_{\xi} \alpha = \alpha_L$ and $\iota_{\xi} \alpha_R = 0$, such that 
\begin{align} \label{eq: left and right parts}
    \alpha = \eta \wedge \alpha_L + \alpha_R. 
\end{align}
\noindent Notice in particular that $\iota_{\xi} \alpha_L = 0$ as well. The fact that $\iota_{\xi} \alpha_L = 0$ and $\iota_{\xi} \alpha_R = 0$ means that both $\alpha_L$ and $\alpha_R$ are actually pulled back from forms on $\h$ via $\pi^*$. This allows for an identification of $(\alt^* \h^*, d_{\h})$ as the subcomplex of $(\alt^* \g^*, d_{\g})$ given by
\begin{align} \label{eq: alt h is subcomplex of alt g}
    \alt^k \h^* \cong \{ \alpha \in \alt^k \g^* \mid \iota_{\xi} \alpha = 0\}, \quad 0 \leq k \leq 2n, 
\end{align}
\noindent and moreover $d_{\h}$ coincides with the restriction of $d_{\g}$ to $\alt^* \h^*$. This last bit is true essentially because $d_{\g} \circ \pi^* = \pi^* \circ d_{\h}$, but we also give a proof based on the identification in \eqref{eq: alt h is subcomplex of alt g}.
\begin{lemma} \label{lemma: they are the same differential}
    If $\beta \in \alt^* \g^*$ satisfies $i_{\xi} \beta = 0$ then $d_{\g} \beta = d_{\h} \beta$. 
\end{lemma}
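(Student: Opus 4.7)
The plan is to verify the claim directly via the Chevalley-Eilenberg formulas for $d_\g$ and $d_\h$, together with a horizontality check for $d_\g \beta$. Under the identification in equation \eqref{eq: alt h is subcomplex of alt g}, proving $d_\g \beta = d_\h \beta$ for a $\xi$-horizontal form $\beta$ amounts to showing two things: (a) that $d_\g \beta$ is again $\xi$-horizontal, i.e., $\iota_\xi d_\g \beta = 0$, so that $d_\g \beta$ lies in the subcomplex identified with $\alt^* \h^*$; and (b) that $d_\g \beta$ and $d_\h \beta$ agree when evaluated on any tuple of vectors coming from $\h$.

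For (a), I would apply Cartan's magic formula $\mathcal{L}_\xi = \iota_\xi d_\g + d_\g \iota_\xi$ to $\beta$. Since $\iota_\xi \beta = 0$ by hypothesis, we obtain $\iota_\xi d_\g \beta = \mathcal{L}_\xi \beta$. Because $(\g, \eta)$ has nontrivial center, Proposition \ref{prop: center in contact Lie algebras} ensures $\z(\g) = \R \xi$, so $\ad_\xi = 0$; this in turn forces $\mathcal{L}_\xi$ to act as zero on all of $\alt^* \g^*$. Hence $\iota_\xi d_\g \beta = 0$, and $d_\g \beta$ sits inside the subcomplex of $\xi$-horizontal forms. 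For (b), I would expand $d_\g \beta$ via the Chevalley-Eilenberg formula evaluated on vectors $x_0, \ldots, x_k \in \h \subset \g$:
\begin{align*}
    d_\g \beta(x_0, \ldots, x_k) = \sum_{i<j} (-1)^{i+j} \beta([x_i, x_j]_\g, x_0, \ldots, \widehat{x_i}, \ldots, \widehat{x_j}, \ldots, x_k).
\end{align*}
Using the contactization bracket \eqref{eq: contactization bracket}, each $[x_i, x_j]_\g$ splits as $\omega(x_i, x_j) \xi + [x_i, x_j]_\h$; since $\iota_\xi \beta = 0$, the $\xi$-component contributes nothing, and what remains is exactly the Chevalley-Eilenberg expression for $d_\h \beta(x_0, \ldots, x_k)$.

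The only conceptual subtlety, and hence the main (mild) obstacle, is keeping careful track of the identification \eqref{eq: alt h is subcomplex of alt g}, so that the equality $d_\g \beta = d_\h \beta$ is interpreted correctly as an equality of forms on $\g$ which are pulled back from $\h$ via $\pi^*$. Once this identification is in place, step (a) guarantees that $d_\g \beta$ is indeed pulled back from some form on $\h$, and step (b) identifies that form with $d_\h \beta$, completing the proof.
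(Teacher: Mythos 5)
Your proposal is correct, and its computational core coincides with the paper's: the bracket in $\g$ splits as $[x,y]_{\g} = \omega(x,y)\xi + [x,y]_{\h}$ and the $\xi$-component is annihilated because $\iota_{\xi}\beta = 0$. Where you diverge is in how you promote this from the basic identity to all degrees. The paper verifies the equality only for $1$-forms $\beta$ with $\iota_{\xi}\beta = 0$ and then concludes by noting that $d_{\g}$ and $d_{\h}$ are antiderivations determined by their action on $1$-forms (which generate $\alt^*\h^*$). You instead run the full Chevalley--Eilenberg formula in every degree and add a separate horizontality check, $\iota_{\xi} d_{\g}\beta = \mathcal{L}_{\xi}\beta = 0$ via Cartan's formula and the centrality of $\xi$, to guarantee that $d_{\g}\beta$ lands in the subcomplex identified with $\alt^*\h^*$ in equation \eqref{eq: alt h is subcomplex of alt g}. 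Both routes are sound; the paper's is shorter, while yours is more self-contained and makes explicit the fact that $d_{\g}$ preserves $\xi$-horizontal forms, a point the paper leaves implicit when it asserts that \eqref{eq: alt h is subcomplex of alt g} defines a subcomplex.
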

\begin{proof}
    If $\beta\in \alt^1\g^*$ then, for any $x$, $y \in \h \subseteq \g$,
\begin{align*}
    - (d_{\h} \beta)(x,y) = \beta( [x,y]_{\h} ) = \beta( \omega(x,y) \xi + [x,y]_{\h} ) = \beta( [x,y]_{\g} ) = - (d_{\g} \beta)(x,y).
\end{align*}
    \noindent Since both $d_{\g}$ and $d_{\h}$ are determined by their actions on $1$-forms, the result follows from here.
\end{proof}    
\indent From now on, assume further that $\g$ is unimodular, which ensures that $\h$ is unimodular as well due to Remark \ref{obs: iff unimod, nil, solv}. Less trivial implications are that the symplectic form $\omega$ on $\h$ is not exact, as observed in Proposition \ref{prop: frobenius implies nonunimodular}, and that both $\h$ and $\g$ are solvable, as per Remark \ref{obs: iff unimod, nil, solv} once more. Notice that, in either the symplectic or the contact sense, unimodularity is in fact equivalent to being $0$-Lefschetz (as we can see from Lemma \ref{lemma: H2n-1 is almost easy}), and a necessary condition for the $1$-Lefschetz property to hold.
\begin{lemma} \label{lemma: H1(h) and H1(g) are isomorphic}
    $\pi^*\colon H^1(\h) \to H^1(\g)$ is an isomorphism. 
\end{lemma}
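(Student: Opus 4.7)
The plan is to exploit the identifications from Lemma~\ref{lemma: H1 is easy}(iii), whereby $H^1(\h)$ and $H^1(\g)$ are regarded as the spaces of closed $1$-forms $Z^1(\h)$ and $Z^1(\g)$, respectively, since no nonzero exact $1$-form exists on either Lie algebra. Under this identification, $\pi^*$ becomes ordinary pullback of $1$-forms, which is a chain map by Lemma~\ref{lemma: equivalencia para morfismos} and hence sends closed forms to closed forms. The injectivity of $\pi^*\colon H^1(\h) \to H^1(\g)$ then follows immediately from the injectivity of $\pi^*$ at the level of $1$-forms, which in turn holds because $\pi$ is surjective.

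For the surjectivity, I would take an arbitrary closed $1$-form $\sigma \in Z^1(\g)$ and argue that it must be $\xi$-horizontal, so that it descends to $\h$ along $\pi$. The key computation is that, for any $x, y \in \h$, equation~\eqref{eq: contactization bracket} yields
\begin{align*}
    0 = -(d_{\g} \sigma)(x,y) = \sigma([x,y]_{\g}) = \omega(x,y)\, \sigma(\xi) + \sigma([x,y]_{\h}),
\end{align*}
which, after restricting $\sigma$ to $\h$, amounts to the identity $\sigma(\xi)\,\omega = d_{\h}(\sigma|_{\h})$ in $\alt^2 \h^*$. If $\sigma(\xi)$ were nonzero, then $\omega$ would be $d_{\h}$-exact, turning $(\h, \omega)$ into a Frobenius Lie algebra and contradicting Proposition~\ref{prop: frobenius implies nonunimodular}, since $\h$ is unimodular by Remark~\ref{obs: iff unimod, nil, solv}. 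Hence $\sigma(\xi) = 0$, so $\sigma = \pi^* \theta$ for a unique $\theta \in \alt^1 \h^*$, and the chain-map property together with the injectivity of $\pi^*$ on forms then forces $d_{\h} \theta = 0$.

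The only genuinely delicate point is this appeal to unimodularity to rule out $\sigma(\xi) \neq 0$; everything else amounts to routine manipulation of the contactization bracket and the identifications above. I expect the main obstacle to be conceptual rather than technical: one must recognize that the non-exactness of $\omega$ is precisely the mechanism preventing closed $1$-forms on $\g$ from acquiring a component along $\eta$, and this is exactly what upgrades $\pi^*$ from an injection to a full isomorphism.
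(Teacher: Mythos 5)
Your proposal is correct and follows essentially the same route as the paper: both identify $H^1$ with $Z^1$ (no exact $1$-forms), get injectivity from injectivity of $\pi^*$ on forms, and obtain surjectivity by showing the $\eta$-component $\sigma(\xi)$ of a closed $1$-form on $\g$ must vanish because otherwise $\omega$ would be $d_{\h}$-exact, contradicting Proposition~\ref{prop: frobenius implies nonunimodular}. The only cosmetic difference is that the paper phrases the key computation as $0 = d_{\g}(a\eta + \gamma') = -a\omega + d_{\h}\gamma'$ using the decomposition \eqref{eq: left and right parts} and Lemma~\ref{lemma: they are the same differential}, whereas you unpack the same identity directly from the contactization bracket.
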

\begin{proof}
    Since $\pi^*\colon \alt^1 \h^* \to \alt^1 \g^*$ and $d_{\g} \circ \pi^* = \pi^* \circ d_{\h}$, the restriction $\pi^*\colon Z^1(\h) \to Z^1(\g)$ is well-defined. Moreover, it is injective since it is the restriction of an injective map. Pick any $\gamma \in Z^1(\g)$, and write it as $\gamma = a \eta + \gamma'$ for some $a\in \R$ and $\gamma' \in \alt^1 \h^*$, following the decomposition in \eqref{eq: left and right parts}. Using Lemma \ref{lemma: they are the same differential} and that $d_{\g} \eta = - \omega$, one gets that 
\begin{align*}
    0 = d_{\g} \gamma = a d_{\g} \eta + d_{\g} \gamma' = - a \omega + d_{\h} \gamma', 
\end{align*}
    \noindent or equivalently $a \omega = d_{\h} \gamma'$. As $\omega$ is not exact as per Proposition \ref{prop: frobenius implies nonunimodular}, it follows that $a = 0$ and $\gamma' \in Z^1(\h)$. Therefore, $\gamma = \gamma'=\pi^* \gamma'$, and $\pi^*\colon Z^1(\h) \to Z^1(\g)$ is surjective. Since there are no exact $1$-forms, $Z^1(\h)$ and $Z^1(\g)$ are identified with $H^1(\h)$ and $H^1(\g)$ respectively, and the claim follows. 
\end{proof}
\begin{corollary} \label{cor: un cuidado para definir el mapa lefschetz contacto}
    $(d \eta)^n \wedge \alpha = 0$ and $\eta \wedge (d \eta)^{n-1} \wedge \alpha$ is closed for all $\alpha \in Z^1(\g)$. 
\end{corollary}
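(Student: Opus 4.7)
The plan is to use Lemma \ref{lemma: H1(h) and H1(g) are isomorphic} to reduce both statements to a degree-counting observation on $\h$. Since every closed $1$-form on $\g$ comes from a closed $1$-form on $\h$ via $\pi^*$, the first step is to observe that any $\alpha \in Z^1(\g)$ is $\xi$-horizontal, i.e.\ $\iota_{\xi} \alpha = 0$. Moreover, $d\eta = -\omega$ is also $\xi$-horizontal because $\iota_{\xi} d\eta = 0$ by the defining property of the Reeb vector. Under the identification in equation \eqref{eq: alt h is subcomplex of alt g}, both $d\eta$ and $\alpha$ then lie in the subcomplex $\alt^* \h^* \subseteq \alt^* \g^*$.

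For the first assertion, I would note that the form $(d\eta)^n \wedge \alpha$ has degree $2n+1$, and by the previous paragraph it belongs to $\alt^{2n+1} \h^*$. Since $\dim \h = 2n$, this space is trivially zero, so $(d\eta)^n \wedge \alpha = 0$. This is really the only substantive point of the corollary, and it is essentially a free consequence of Lemma \ref{lemma: H1(h) and H1(g) are isomorphic}.

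For the second assertion, I would carry out the Leibniz computation
\begin{align*}
    d\bigl( \eta \wedge (d\eta)^{n-1} \wedge \alpha \bigr) = (d\eta)^n \wedge \alpha - \eta \wedge (d\eta)^{n-1} \wedge d\alpha.
\end{align*}
The first term vanishes by the first part of the corollary, and the second term vanishes since $\alpha$ is closed. Hence $\eta \wedge (d\eta)^{n-1} \wedge \alpha \in Z^{2n}(\g)$, as required.

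There is no real obstacle here: the statement is a bookkeeping corollary of Lemma \ref{lemma: H1(h) and H1(g) are isomorphic}, whose role is to guarantee $\iota_{\xi} \alpha = 0$ so that the wedge lives inside $\alt^* \h^*$ and the top-degree argument applies. The only subtlety worth stating explicitly is that we are using the identification from equation \eqref{eq: alt h is subcomplex of alt g} together with the fact that $d\eta$ pulls back from $\h$, both of which have already been recorded.
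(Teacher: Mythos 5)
Your proof is correct and follows essentially the same route as the paper's: both use Lemma \ref{lemma: H1(h) and H1(g) are isomorphic} to see that $\alpha$ and $(d\eta)^n$ are pulled back from $\h$, conclude that $(d\eta)^n\wedge\alpha$ vanishes because it would be a $(2n+1)$-form on the $2n$-dimensional $\h$, and then finish with the same Leibniz computation. No issues.
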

\begin{proof}
    Since $Z^1(\g) = H^1(\g)$, it follows from Lemma \ref{lemma: H1(h) and H1(g) are isomorphic} that any $\alpha \in Z^1(\g)$ is actually the pullback of a closed $1$-form on $\h$; similarly, $(d \eta)^n$ is the pullback of $\omega^n \in \alt^{2n} \h^*$. Therefore, for any $\alpha \in Z^1(\g)$, $(d \eta)^n \wedge \alpha$ is the pullback of a closed form on $\h$, and moreover, it has degree $2n+1$ while $\dim \h = 2n$. Thus, $(d \eta)^n \wedge \alpha$ is the pullback of the zero form on $\h$, and so zero itself. Therefore,
\begin{align*}
    d (\eta \wedge (d \eta)^{n-1} \wedge \alpha) = (d \eta) \wedge (d \eta)^{n-1} \wedge \alpha = (d \eta)^n \wedge \alpha = 0
\end{align*}
   \noindent for all $\alpha \in Z^1(\g)$. 
\end{proof}

\indent Set $\n_{\g} := [\g,\g]_{\g}$ and $\n_{\h} := [\h,\h]_{\h}$. As in Section \ref{section: cohomology remarks}, choose vector space complements
\begin{align*}
    \g = \a_{\g} \oplus \n_{\g}, \quad \h = \a_{\h} \oplus \n_{\h},
\end{align*} 
\begin{proposition} \label{prop: xi belongs in the commutator}
    $\n_{\g} = \R \xi \oplus \n_{\h}$ as vector spaces. In particular, $\xi \in \n_{\g}$.
\end{proposition}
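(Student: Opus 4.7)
The plan is to combine a direct verification that $\n_{\g}\subseteq \R\xi+\n_{\h}$ with a dimension count made available by the cohomological results just proven.

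First, I would check the easy inclusion. Since $\xi$ is central in $\g$, every nonzero commutator in $\g$ can be written, via the contactization bracket in equation \eqref{eq: contactization bracket}, as $[x,y]_{\g}=\omega(x,y)\xi+[x,y]_{\h}$ for suitable $x,y\in\h$. Such an element clearly lies in $\R\xi+\n_{\h}$, and since $\xi\notin\h$ the sum $\R\xi+\n_{\h}$ is direct. Thus $\n_{\g}\subseteq \R\xi\oplus\n_{\h}$.

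Next, to promote this to equality, I would compare dimensions. Lemma~\ref{lemma: H1 is easy}(iv) gives $b_1(\g)=\dim\g-\dim\n_{\g}$ and $b_1(\h)=\dim\h-\dim\n_{\h}$, while Lemma~\ref{lemma: H1(h) and H1(g) are isomorphic} provides the isomorphism $\pi^*\colon H^1(\h)\to H^1(\g)$, so $b_1(\h)=b_1(\g)$. Consequently
\begin{align*}
    \dim\n_{\g}=\dim\g-b_1(\g)=(2n+1)-b_1(\h)=(\dim\h-b_1(\h))+1=\dim\n_{\h}+1,
\end{align*}
which matches $\dim(\R\xi\oplus\n_{\h})$. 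Combined with the inclusion above, this forces the equality $\n_{\g}=\R\xi\oplus\n_{\h}$, and in particular $\xi\in\n_{\g}$.

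I do not expect any real obstacle: the substantive work has already been carried out in Lemmas~\ref{lemma: H1 is easy} and~\ref{lemma: H1(h) and H1(g) are isomorphic}, where the unimodularity hypothesis (which ensures $\omega$ is not exact, hence no constant multiple of $\eta$ survives in $Z^1(\g)$) is exactly what was needed to turn $\pi^*$ into an isomorphism on first cohomology. The only point worth keeping track of is the identification of $\h$, and hence of $\n_{\h}$, as a subspace of $\g$ via the section $s\colon\h\hookrightarrow\g$, which is the inclusion fixed throughout this section.
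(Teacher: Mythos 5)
Your proposal is correct and follows essentially the same route as the paper: the inclusion $\n_{\g}\subseteq\R\xi\oplus\n_{\h}$ from the contactization bracket, together with the dimension count $\dim\n_{\g}=\dim\n_{\h}+1$ obtained from Lemma~\ref{lemma: H1 is easy}(iv) and the isomorphism $\pi^*\colon H^1(\h)\to H^1(\g)$ of Lemma~\ref{lemma: H1(h) and H1(g) are isomorphic}. Nothing is missing.
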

\begin{proof}
    Combining Lemma \ref{lemma: H1 is easy}(iv) and Lemma \ref{lemma: H1(h) and H1(g) are isomorphic}, it follows that
\begin{align*}
    \dim \a_\h = b_1(\h) = b_1(\g) = \dim \a_{\g},
\end{align*}
\noindent and hence $\dim \n_{\h} + 1 = \dim \n_{\g}$. Moreover, since
\begin{align*}
    [x,y]_{\g} = \omega(x,y) \xi + [x,y]_{\h} \in \R\xi \oplus \n_{\h}
\end{align*}
\noindent for all $x$, $y \in \h$, one gets $\n_{\g} \subseteq \R \xi \oplus \n_{\h}$. The equality $\n_{\g} = \R \xi \oplus \n_{\h}$ follows from dimension counting. 
\end{proof}
\begin{remark} \label{obs: heisenberg are precisely central extensions of abelian lie algebras}
    Proposition \ref{prop: xi belongs in the commutator} readily implies that $\dim \n_{\g} = 1$ if and only if $\h$ is abelian, from which it follows that Heisenberg Lie algebras are precisely those arising as $1$-dimensional central extensions of abelian Lie algebras. 
\end{remark}
\begin{remark}
    The fact that $\xi \in \n_\g$ can be established without appeal to cohomological considerations if we further assume that $\g$ is nilpotent, for in this case $\z(\g)$ and $\n_{\g}$ intersect nontrivially and, as pointed out in Proposition \ref{prop: center in contact Lie algebras}, $\z(\g)$ is generated by $\xi$. 
\end{remark} 
\begin{remark} \label{obs: ok, we do use this fact in the sequel}
    The fact that $\xi \in \n_{\g}$ in Proposition \ref{prop: xi belongs in the commutator} can also be obtained by appealing to the universal coefficient theorem for $\h$, and in fact is equivalent to the condition that $\omega$ is not a Frobenius form on $\h$. Recall that since we are working over $\R$, the universal coefficient theorem amounts to the fact that $H^k(\h) \cong \mathrm{Hom}(H_k(\h), \R)$ as vector spaces for all $0 \leq k \leq \dim \h$. Thus $[\omega]$ is a nonzero cohomology class of degree $2$ (i.e., $\omega$ is not a Frobenius form) if and only if it is dual to some nonzero homology class $[Z] \in H_2(\h)$ of degree $2$, which is equivalent to the assertion that there are some $x_i$'s and $y_i$'s in $\h$ such that $\sum_i [x_i, y_i]_{\h} = 0$ and $\sum_i \omega(x_i, y_i) \neq 0$, and thus equivalent to the fact that $\xi \in \n_{\g}$.
\end{remark}

\indent As per Proposition \ref{prop: xi belongs in the commutator}, it is always possible to take $\a_{\h}$ and $\a_{\g}$ to be equal. Thus, from now on we set $\a := \a_{\h} = \a_{\g}$ and
\begin{align*}
    \g = \a \oplus \n_{\g}, \quad \h = \a \oplus \n_{\h},
\end{align*}
\noindent with the extra knowledge that $\n_{\g} = \R \xi \oplus \n_{\h}$. From now on we fix the volume forms $\Omega_{\g}$ and $\Omega_{\h}$ on $\g$ and $\h$ respectively to be
\begin{align*}
    \Omega_{\g} = \eta \wedge (d \eta)^n, \quad \Omega_{\h} = (d \eta|_\h)^n.
\end{align*}
\noindent Notice in particular that $\Omega_{\g} = \eta \wedge \Omega_{\h}$ on $\g$. Set $k:=\dim \a$, $l_{\g} := \dim \n_{\g}$, and $l_{\h} := \dim \n_{\h}$. As in Section \ref{section: cohomology remarks}, write
\begin{align*}
    \alt^{k-1, l_{\g}} \g^* = \{ \iota_x \Omega_{\g} \mid x \in \a \}, \quad \alt^{k-1, l_{\h}} \h^* = \{ \iota_x \Omega_{\h} \mid x \in \a \}. 
\end{align*} 
\begin{lemma} \label{lemma: we are almost done}
    The map $H^{2n-1}(\h) \to H^{2n}(\g)$ given by $[\beta] \mapsto [\eta \wedge \beta]$ is an isomorphism, with inverse $H^{2n}(\g) \to H^{2n-1}(\h)$ given by $[\alpha] \to [\iota_{\xi} \alpha]$. 
\end{lemma}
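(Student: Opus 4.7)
The plan is to verify that both linear maps are well-defined on cohomology and then check directly that they are mutual inverses. The main technical tools are Lemma~\ref{lemma: they are the same differential} (identifying $d_\g$ with $d_\h$ on $\xi$-horizontal forms) and Cartan's formula $\mathcal{L}_\xi = d_\g \iota_\xi + \iota_\xi d_\g$, combined with the observation that $\mathcal{L}_\xi \equiv 0$ on the Chevalley--Eilenberg complex of $\g$ because $\xi$ is central by Proposition~\ref{prop: center in contact Lie algebras}.

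For the map $[\beta] \mapsto [\eta \wedge \beta]$, I first expand $d_\g(\eta \wedge \beta) = -\omega \wedge \beta - \eta \wedge d_\h \beta$ and observe that the first summand vanishes for degree reasons ($\omega \wedge \beta \in \alt^{2n+1} \h^* = 0$) while the second vanishes if $\beta$ is $d_\h$-closed. To handle exact classes, if $\beta = d_\h \gamma$ for some $\gamma \in \alt^{2n-2}\h^*$, the same identity applied to $\eta \wedge \gamma$ yields $\eta \wedge \beta = -\omega \wedge \gamma - d_\g(\eta \wedge \gamma)$, so I must show that $\omega \wedge \gamma$ is $d_\g$-exact in $\g$. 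Since $\alt^{2n}\h^*$ is one-dimensional, $\omega \wedge \gamma$ is a scalar multiple of $\Omega_\h = (-1)^n (d\eta)^n$, and the identity $(d\eta)^n = d_\g(\eta \wedge (d\eta)^{n-1})$ finishes the job. I expect this to be the main obstacle of the argument, because it relies crucially on the fact that a nonzero class in $H^{2n}(\h)$, guaranteed by unimodularity, becomes trivial once viewed in $H^{2n}(\g)$.

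For the map $[\alpha] \mapsto [\iota_\xi \alpha]$, the form $\iota_\xi \alpha$ is $\xi$-horizontal (as $\iota_\xi^2 = 0$), so by equation~\eqref{eq: alt h is subcomplex of alt g} it identifies with an element of $\alt^{2n-1}\h^*$. Cartan's formula together with $\mathcal{L}_\xi = 0$ gives $d_\g \iota_\xi \alpha = -\iota_\xi d_\g \alpha$, which vanishes for closed $\alpha$ and, by Lemma~\ref{lemma: they are the same differential}, equals $d_\h \iota_\xi \alpha$; an analogous calculation handles exact classes. Finally, I will compute both compositions: one direction gives $[\beta]$ directly from $\iota_\xi(\eta \wedge \beta) = (\iota_\xi \eta)\beta - \eta \wedge \iota_\xi \beta = \beta$; for the other, writing $\alpha = \eta \wedge \alpha_L + \alpha_R$ via equation~\eqref{eq: left and right parts} with $\alpha_L = \iota_\xi \alpha$, it remains only to show that $\alpha_R$ is $d_\g$-exact, which follows since $\alpha_R \in \alt^{2n}\h^*$ is again a multiple of $\Omega_\h$, and thus $d_\g$-exact by the same identity used above.
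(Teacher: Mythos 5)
Your proof is correct, but it takes a genuinely different route from the paper. The paper's proof leans on the machinery already built in Section~\ref{section: cohomology remarks}: it uses Proposition~\ref{prop: xi belongs in the commutator} to place $\xi$ in $\n_{\g}$ (so that $\a$ can be chosen inside $\h$ and $\iota_x\eta=0$ for $x\in\a$), computes $\iota_x(\eta\wedge\lambda)=-\eta\wedge\iota_x\lambda$ to show that $\eta\wedge(-)$ carries the $k$-dimensional space $\alt^{k-1,l_{\h}}\h^*=\{\iota_x\Omega_{\h}\mid x\in\a\}$ bijectively onto $\alt^{k-1,l_{\g}}\g^*=\{\iota_x\Omega_{\g}\mid x\in\a\}$, and then transports this to cohomology via the identifications of Lemma~\ref{lemma: H2n-1 is easy for unimodular}(iv), which rest on Poincar\'e duality. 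You instead work directly at the cochain level: well-definedness on closed and exact forms via $\mathcal{L}_{\xi}=d_{\g}\iota_{\xi}+\iota_{\xi}d_{\g}=0$ (valid since $\xi$ is central) together with Lemma~\ref{lemma: they are the same differential}, with the one genuinely nontrivial point being that $\omega\wedge\gamma$ and $\alpha_R$, both multiples of $\Omega_{\h}$, become exact in $\g$ because $(d\eta)^n=d_{\g}\bigl(\eta\wedge(d\eta)^{n-1}\bigr)$; you then verify the two compositions are identities on cohomology. Your version buys self-containedness: it never invokes Poincar\'e duality, the commutator decomposition, or the dimension count $\dim H^{2n}(\g)=\dim H^{2n-1}(\h)=k$, and it exhibits the inverse explicitly on arbitrary representatives rather than on the preferred ones $\iota_x\Omega$. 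What it gives up is the explicit description of $H^{2n}(\g)$ as $\{\iota_x\Omega_{\g}\mid x\in\a\}$, which the paper reuses in the commutative diagram that follows; if you adopt your proof you should still record that identification (or check separately that your isomorphism intertwines $L$ and $\mathrm{Lef}$, which it does by the same computation $\iota_{\xi}(\eta\wedge\beta)=\beta$). One cosmetic slip: with the paper's convention $\Omega_{\h}=(d\eta|_{\h})^n$, one has $\omega^n=(-1)^n\Omega_{\h}$ rather than $\Omega_{\h}=(-1)^n(d\eta)^n$; this has no bearing on the argument.
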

\begin{proof}
    Proposition \ref{prop: xi belongs in the commutator} shows that $\xi \in \n_{\g}$, and in particular that $\xi \notin \a$. Therefore,  
\begin{align*}
    \iota_x( \eta \wedge \lambda) = \iota_x \eta \wedge \lambda - \eta \wedge \iota_x \lambda = - \eta \wedge \iota_x \lambda
\end{align*}
    \noindent for any $\lambda \in \alt^* \g^*$ and any $x \in \a$, and consequently
\begin{align*}
    \alt^{k-1, l_{\g}} \g^* = \{ \iota_x \Omega_{\g} \mid x \in \a \} = \{ - \eta \wedge \iota_x \Omega_{\h} \mid x \in \a \} = (-\eta) \wedge \alt^{k-1, l_{\h}} \h^*. 
\end{align*}
    \noindent Therefore, the maps
\begin{align*}
\begin{array}{ccc}
    \alt^{k-1, l_{\h}} \h^* \to \alt^{k-1, l_{\g}} \g^*, & & \alt^{k-1, l_{\g}} \g^* \to \alt^{k-1, l_{\h}} \h^*, \\
     \beta \mapsto \eta \wedge \beta, & & \alpha \mapsto \iota_{\xi} \alpha,
\end{array}    
\end{align*}
    \noindent are well defined. It is also clear that the first map is a surjection, and so an isomorphism since both spaces are $k$-dimensional; the inverse is given by the second map since 
\begin{align*}
    \beta \mapsto \eta \wedge \beta \mapsto \iota_{\xi} (\eta \wedge \beta) = \iota_{\xi}\eta \wedge \beta - \eta \wedge \iota_{\xi} \beta = \beta
\end{align*}
    \noindent for all $\beta \in \alt^{k-1, l_{\h}} \h^*$, as $\iota_{\xi} \eta = 1$ and $\iota_{\xi} \beta = 0$. Recall from Lemma \ref{lemma: H2n-1 is easy for unimodular}(iv) that $H^{2n}(\g)$ and $H^{2n-1}(\h)$ are identified with $\alt^{k-1, l_{\g}} \g^*$ and $\alt^{k-1, l_{\h}} \h^*$ through the maps $\pi_{\g}\colon \alt^{k-1, l_{\g}} \g^* \to H^{2n}(\g)$ and $\pi_{\h}\colon \alt^{k-1, l_{\h}} \h^* \to H^{2n-1}(\h)$ arising as restrictions of the respective canonical projections. This means that the maps
\begin{align*}
\begin{array}{ccc}
    H^{2n-1}(\h) \to H^{2n}(\g), & & H^{2n}(\g) \to H^{2n-1}(\h), \\
   \phantom{.} [\beta] \mapsto \pi_{\g} ( \eta \wedge \pi_{\h}^{-1} ([\beta])), & &  [\alpha] \mapsto \pi_{\h} \circ \iota_{\xi} \circ \pi_{\g}^{-1} ([\alpha]),
\end{array}    
\end{align*}
    \noindent are well defined and inverse to each other. The claim is thus established. 
\end{proof}
    \indent Lemma \ref{lemma: we are almost done} can be stated succinctly as follows: 
\begin{align*}
    H^{2n}(\g) \cong [\eta] \wedge H^{2n-1}(\h), \quad \text{via $[\eta] \wedge (-)$}.
\end{align*}

    \indent Lemma \ref{lemma: H1(h) and H1(g) are isomorphic} and the first part of Corollary \ref{cor: un cuidado para definir el mapa lefschetz contacto} ensure that any cohomology class in $H^1(\g)$ admits $\xi$-horizontal and primitive representatives, respectively. According to the remarks following Definition \ref{def: contact lefschetz condition}, this is half of the conditions required for the contact $1$-Lefschetz condition to hold. The second part of Corollary \ref{cor: un cuidado para definir el mapa lefschetz contacto} ensures that the contact $1$-Lefschetz map, defined as
\begin{align*}
    \mathrm{Lef}:H^1(\g) \to H^{2n}(\g), \quad \mathrm{Lef}( [\alpha] ) := [\eta \wedge (d \eta)^{n-1} \wedge \alpha],
\end{align*}
    \noindent is well defined. We refrain calling it $\mathrm{Lef}^{n-1}$ in an effort not to overcomplicate the notation, as we are only concerned with the degree-one case. Following Definition \ref{def: contact lefschetz condition}, $(\g, \eta)$ is $1$-Lefschetz if and only if $\mathrm{Lef}$ is bijective. 

    \indent Consider also the symplectic $1$-Lefschetz map, defined as 
\begin{align*}
    L:H^1(\h) \to H^{2n-1}(\h), \quad L([\beta]) = [\omega^{n-1} \wedge \beta].
\end{align*}
\noindent Once again, we refrain calling it $L^{n-1}$ as there is no risk of confusion. After identifying $d \eta$ and $- \omega$ on $\h$, we obtain the following commutative diagram:  
\begin{center}
    \begin{tikzpicture}[auto]
    \node (A) at (0,0) {$H^1(\h)$};
    \node (B) at (3.5,0) {$H^{2n-1}(\h)$};
    \node (C) at (0,-2) {$H^1(\g)$};
    \node (D) at (3.5,-2) {$H^{2n}(\g)$};
    \draw[->] (A) to node {$L$} (B);
    \draw[->] (A) to node {$\pi^*$} (C);
    \draw[->] (B) to node {$[\eta] \wedge -$} (D);
    \draw[->] (C) to node {$(-1)^n \mathrm{Lef}$} (D); 
\end{tikzpicture}
\end{center}    
\noindent The vertical arrows correspond with the maps in Lemma \ref{lemma: H1(h) and H1(g) are isomorphic} and in Lemma \ref{lemma: we are almost done}, and both are isomorphisms. Therefore, $L$ is an isomorphism if and only if $\mathrm{Lef}$ is an isomorphism. The main result of this section has just been established. 
\begin{theorem} \label{thm: main, i guess}
    $(\g,\eta)$ is $1$-Lefschetz if and only if $(\h,\omega)$ is $1$-Lefschetz. 
\end{theorem}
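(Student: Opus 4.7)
The plan is to exploit the commutative diagram displayed just before the statement of the theorem. All the nontrivial ingredients have already been assembled: Lemma \ref{lemma: H1(h) and H1(g) are isomorphic} provides the left vertical isomorphism $\pi^*\colon H^1(\h) \to H^1(\g)$, Lemma \ref{lemma: we are almost done} provides the right vertical isomorphism $[\eta]\wedge(-)\colon H^{2n-1}(\h)\to H^{2n}(\g)$, and Corollary \ref{cor: un cuidado para definir el mapa lefschetz contacto} ensures that $\mathrm{Lef}$ is well-defined on the whole of $H^1(\g)$ (both the $\xi$-horizontal condition, via the fact that every closed $1$-form on $\g$ pulls back from $\h$, and the primitivity condition $L^{n}\alpha = (d\eta)^n\wedge\alpha = 0$). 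So the proof reduces to verifying commutativity of the square and then invoking elementary diagram chasing.

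First I would check at the cochain level that the diagram commutes up to a sign. Pick $\beta\in Z^1(\h)$. Via the identification in equation \eqref{eq: alt h is subcomplex of alt g}, regard $\beta$ as a $\xi$-horizontal element of $Z^1(\g)$, which represents $\pi^*[\beta]$. Then, since $\omega = -d_{\g}\eta|_\h$ and since both sides are pulled back from $\h$, one gets
\begin{equation*}
\mathrm{Lef}(\pi^*[\beta]) = \bigl[\eta\wedge(d\eta)^{n-1}\wedge\beta\bigr] = (-1)^{n-1}\bigl[\eta\wedge\omega^{n-1}\wedge\beta\bigr] = (-1)^{n-1}\,[\eta]\wedge L([\beta]),
\end{equation*}
so the square commutes up to the sign $(-1)^{n-1}$ (or, equivalently, $(-1)^n$ depending on which direction one reads the diagram). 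The point is only that it commutes up to a nonzero scalar.

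Once commutativity is established, the conclusion is immediate. Both vertical maps are isomorphisms, so for any $[\beta]\in H^1(\h)$ one has $\mathrm{Lef}(\pi^*[\beta]) = \pm\,([\eta]\wedge -)\circ L([\beta])$. Hence $\mathrm{Lef}$ is injective (resp.\ surjective) if and only if $L$ is injective (resp.\ surjective). Since $\dim H^1(\g) = \dim H^1(\h)$ and $\dim H^{2n}(\g) = \dim H^{2n-1}(\h) = 1$ (the latter two by Lemma \ref{lemma: H2n-1 is almost easy}(iv) and unimodularity), bijectivity of either horizontal map is equivalent to bijectivity of the other, and the theorem follows.

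I expect no substantive obstacle: the entire difficulty of the statement has been pushed into the preparatory Lemmas \ref{lemma: H1(h) and H1(g) are isomorphic} and \ref{lemma: we are almost done} and the well-definedness checks in Corollary \ref{cor: un cuidado para definir el mapa lefschetz contacto}. The only thing requiring care is bookkeeping with the sign $(-1)^{n-1}$ arising from $\omega = -d\eta$, and the tacit verification that the primitive-representative and $\xi$-horizontal-representative requirements built into Definition \ref{def: contact lefschetz condition} are automatically satisfied for every class in $H^1(\g)$ under the unimodularity assumption, which is exactly what Lemma \ref{lemma: H1(h) and H1(g) are isomorphic} and Corollary \ref{cor: un cuidado para definir el mapa lefschetz contacto} deliver.
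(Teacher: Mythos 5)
Your proposal is correct and follows essentially the same route as the paper: the paper likewise assembles the commutative square from the isomorphism $\pi^*\colon H^1(\h)\to H^1(\g)$ of Lemma \ref{lemma: H1(h) and H1(g) are isomorphic}, the isomorphism $[\eta]\wedge(-)$ of Lemma \ref{lemma: we are almost done}, and the well-definedness checks of Corollary \ref{cor: un cuidado para definir el mapa lefschetz contacto}, and then concludes that $\mathrm{Lef}$ is bijective if and only if $L$ is. One inessential slip: $\dim H^{2n}(\g)$ is not $1$ but rather $b_1(\g)=\dim\a$ by Lemma \ref{lemma: H2n-1 is easy for unimodular}(v) (Lemma \ref{lemma: H2n-1 is almost easy}(iv) concerns the top degree $2n+1$, not $2n$); this does not affect your argument, since commutativity up to a nonzero scalar together with the two vertical isomorphisms already forces the equivalence without any dimension count.
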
 
\begin{remark} \label{obs: main, i guess}
    Let $\Gamma \backslash G$ be a solvmanifold endowed with an invariant contact form $\eta$, and whose corresponding Lie algebra $\g$ has nontrivial center. If further $H^*(\g) \cong H^*(\Gamma \backslash G)$, a condition fulfilled under the conditions of Proposition \ref{prop: sometimes it is an isomorphism}, then Theorem \ref{thm: main, i guess} extends to the geometric setting in a natural way. When the cohomology of $\Gamma \backslash G$ is \textit{not} given by invariant forms, and if further the contact Lie algebra $(\g,\eta)$ fails to be $1$-Lefschetz, then Proposition \ref{prop: there is always an injection} can be used to argue that $\Gamma \backslash G$ is not $1$-Lefschetz.  
\end{remark} 
\begin{remark}
    In the unimodular case, $\omega$ is a non-exact form on $\h$ while its pullback to $\g$ is exact. The proof of Lemma \ref{lemma: 2-cohomologia} below implies that actually $H^2(\g) \cong H^2(\h)/\R\omega$. This gives a clue that the relation between $H^k(\g)$ and $H^k(\h)$ for $2 \leq k \leq n$ is not straightforward.
\end{remark}

\subsection{Some applications} \label{section: some applications}

\indent Except for the last result of this section, we restrict our attention to the nilpotent setting. We first obtain the following generalization of Proposition \ref{prop: for later use}. 
\begin{corollary} \label{cor: 1-lefschetz nilpotent}
    The only contact nilpotent Lie algebras that are $1$-Lefschetz are the Heisenberg Lie algebras.
\end{corollary}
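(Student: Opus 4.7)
The plan is to show that the statement follows almost immediately from Theorem \ref{thm: main, i guess} together with the classical Benson–Gordon characterization of $1$-Lefschetz symplectic nilpotent Lie algebras encoded in Theorem \ref{thm: Benson-Gordon 1} (or Theorem \ref{thm: Benson-Gordon 2}). The strategy is to reduce the contact problem to a symplectic one by contactization and then invoke the nilpotent symplectic result.

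First, I would let $(\g,\eta)$ be a contact nilpotent $1$-Lefschetz Lie algebra. Since $\g$ is nilpotent, its center is nontrivial, so by Proposition \ref{prop: center in contact Lie algebras} we have $\z(\g) = \R\xi$, where $\xi$ is the Reeb vector. Consequently $(\g,\eta)$ arises as the contactization of a symplectic Lie algebra $(\h,\omega)$. Remark \ref{obs: iff unimod, nil, solv} ensures that $\h$ is nilpotent (and in particular unimodular), so Theorem \ref{thm: main, i guess} applies and gives that $(\h,\omega)$ is itself $1$-Lefschetz.

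Next, I would invoke the symplectic Benson–Gordon characterization. By Theorem \ref{thm: Benson-Gordon 2}(iii), a unimodular symplectic $1$-Lefschetz Lie algebra $(\h,\omega)$ satisfies $\z(\h)\cap[\h,\h]=0$. Since $\h$ is nilpotent, the last nonzero term of its lower central series lies simultaneously in $\z(\h)$ and in $[\h,\h]$, forcing $[\h,\h]=0$; that is, $\h$ is abelian. (Alternatively, one may cite Theorem \ref{thm: Benson-Gordon 1} directly: a $1$-Lefschetz symplectic nilpotent Lie algebra must be abelian.)

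Finally, I would conclude by appealing to Remark \ref{obs: heisenberg are precisely central extensions of abelian lie algebras}, which identifies the $1$-dimensional central extensions by a symplectic cocycle of abelian Lie algebras with Heisenberg Lie algebras. Hence $\g = \R\xi \oplus_\omega \h$ is isomorphic to a Heisenberg Lie algebra, and the converse direction is already covered by Example \ref{ex: heisenberg is sasakian} together with Theorem \ref{thm: compact sasakian are hard-Lefschetz} (Sasakian implies hard-Lefschetz, hence $1$-Lefschetz). There is no significant obstacle in this argument: all the work has already been done in establishing Theorem \ref{thm: main, i guess}, and the only ingredient beyond it is the elementary observation that nilpotent Lie algebras have $\z \cap [\,\cdot\,,\,\cdot\,] \neq 0$ unless they are abelian.
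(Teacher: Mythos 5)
Your argument is essentially the paper's own proof: contactize, transfer the $1$-Lefschetz condition to $(\h,\omega)$ via Theorem \ref{thm: main, i guess}, invoke Benson--Gordon (Theorem \ref{thm: Benson-Gordon 1}, or your lower-central-series argument from Theorem \ref{thm: Benson-Gordon 2}(iii)) to force $\h$ to be abelian, and identify the contactization of an abelian Lie algebra with a Heisenberg Lie algebra via Remark \ref{obs: heisenberg are precisely central extensions of abelian lie algebras}. The one place you diverge is the converse direction: Theorem \ref{thm: compact sasakian are hard-Lefschetz} concerns \emph{compact} Sasakian manifolds, so it does not apply verbatim to the non-compact Heisenberg Lie group or to its Lie algebra without first passing to a nilmanifold quotient and invoking Nomizu's theorem; the paper instead simply observes that an abelian symplectic Lie algebra is trivially $1$-Lefschetz and applies Theorem \ref{thm: main, i guess} once more, which is the cleaner route and the fix you should adopt.
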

\begin{proof}
    Let $(\g, \eta)$ be a contact nilpotent Lie algebra. Since $\z(\g)$ is nontrivial, and thus generated by $\xi$ according to  Proposition \ref{prop: center in contact Lie algebras}, it arises as contactization of a symplectic Lie algebra $(\h, \omega)$. Notice that $\h$ is nilpotent as observed in Remark \ref{obs: iff unimod, nil, solv}. Moreover, $\h$ is abelian if and only if $\g$ is a Heisenberg Lie algebra, as pointed out in Remark \ref{obs: heisenberg are precisely central extensions of abelian lie algebras}. Therefore, if $(\h,\omega)$ is nonabelian then it fails to be $1$-Lefschetz as a consequence of Benson and Gordon's result, Theorem \ref{thm: Benson-Gordon 1}. But if it is abelian then it is trivially $1$-Lefschetz. The claim then follows from Theorem \ref{thm: main, i guess}.
\end{proof}
\indent The following immediate consequence can be interpreted as the contact counterpart of Benson and Gordon's result on symplectic nilmanifolds, Theorem \ref{thm: Benson-Gordon 1}, in the invariant setting. Recall that a nilmanifold $M = \Gamma \backslash N$ is said to be \textit{Heisenberg} if $N$ is isomorphic to a Heisenberg Lie group, and that any nilmanifold diffeomorphic to a Heisenberg nilmanifold is in fact a Heisenberg nilmanifold due to Remark \ref{obs: heisenberg nilmanifolds}. 
\begin{theorem} \label{thm: odd benson-gordon}
   A nilmanifold endowed with an invariant contact form is $1$-Lefschetz if and only if it is a Heisenberg nilmanifold. 
\end{theorem}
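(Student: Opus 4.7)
The plan is to reduce Theorem \ref{thm: odd benson-gordon} to its Lie-algebra counterpart, Corollary \ref{cor: 1-lefschetz nilpotent}, by transferring the contact $1$-Lefschetz condition from the nilmanifold to the underlying Lie algebra via Nomizu's theorem, and then invoking Malcev's rigidity to identify the underlying nilpotent Lie group as a Heisenberg group.

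First I would observe that an invariant contact form $\eta$ on $M = \Gamma \backslash N$ descends from a contact form on the nilpotent Lie algebra $\mathfrak{n}$ of $N$, with likewise invariant Reeb vector $\xi$ (by uniqueness of $\xi$ and invariance of $\eta$ and $d\eta$). Since $\mathfrak{n}$ is nilpotent and therefore has nontrivial center, Proposition \ref{prop: center in contact Lie algebras} forces $\mathfrak{z}(\mathfrak{n}) = \mathbb{R}\xi$, so $(\mathfrak{n}, \eta)$ arises as the contactization of a nilpotent symplectic Lie algebra $(\mathfrak{h}, \omega)$.

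Next I would invoke Nomizu's theorem (Proposition \ref{prop: sometimes it is an isomorphism}(i)) to identify $H^*_{dR}(M) \cong H^*(\mathfrak{n})$ via the inclusion of invariant forms, so that the $1$-Lefschetz condition transfers between the two levels, along the lines of Remark \ref{obs: main, i guess}. Concretely, the horizontality condition $\iota_\xi \beta = 0$, the primitivity condition $(d\eta)^n \wedge \beta = 0$, and the Lefschetz map $[\beta] \mapsto [\eta \wedge (d\eta)^{n-1} \wedge \beta]$ all involve only the invariant data $\xi$, $\eta$, $d\eta$; moreover, by the computation in the proof of Lemma \ref{lemma: H1(h) and H1(g) are isomorphic} combined with Corollary \ref{cor: un cuidado para definir el mapa lefschetz contacto}, every invariant closed $1$-form on $\mathfrak{n}$ is automatically $\xi$-horizontal and primitive. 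Consequently, the contact $1$-Lefschetz conditions on $(M, \eta)$ and $(\mathfrak{n}, \eta)$ both reduce to the bijectivity of the same map between isomorphic cohomology groups, so they are equivalent.

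Finally, Corollary \ref{cor: 1-lefschetz nilpotent} forces $\mathfrak{n} \cong \mathfrak{h}_{2n+1}$ whenever $(\mathfrak{n},\eta)$ is $1$-Lefschetz, and then Remark \ref{obs: heisenberg nilmanifolds} (Malcev's rigidity) concludes that $M$ is a Heisenberg nilmanifold. For the converse, a Heisenberg nilmanifold $M = \Gamma \backslash H_{2n+1}$ has underlying Lie algebra $\mathfrak{h}_{2n+1}$, which is the contactization of an abelian symplectic Lie algebra --- trivially $1$-Lefschetz --- and so $(\mathfrak{h}_{2n+1}, \eta)$ is $1$-Lefschetz by Theorem \ref{thm: main, i guess}; the Nomizu transfer then yields that $(M, \eta)$ is $1$-Lefschetz. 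The main obstacle is the careful transfer of the contact $1$-Lefschetz condition in the middle step, but this is not deep once one checks that every defining ingredient of the Lefschetz relation is preserved under invariance, which is the substance of Remark \ref{obs: main, i guess}; the remaining steps are direct applications of classical rigidity results.
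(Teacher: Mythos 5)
Your proposal is correct and follows essentially the same route as the paper: the paper derives Theorem \ref{thm: odd benson-gordon} as an immediate consequence of Corollary \ref{cor: 1-lefschetz nilpotent}, transferring the contact $1$-Lefschetz condition between $\Gamma\backslash N$ and $\mathfrak{n}$ via Nomizu's theorem as indicated in Remark \ref{obs: main, i guess}, and identifying the quotient as a Heisenberg nilmanifold via Malcev rigidity (Remark \ref{obs: heisenberg nilmanifolds}). Your write-up merely makes explicit the details the paper leaves implicit.
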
 
\indent In particular, the fundamental group of a nilmanifold endowed with an invariant $1$-Lefschetz contact form is isomorphic to a lattice of one of the Heisenberg groups (which have been classified in \cite[Theorem 2.4]{GW}). 
\begin{remark}
    Theorem \ref{thm: odd benson-gordon} provides a partial answer to the question raised in \cite{Cagliari 3} concerning the existence of non-Heisenberg (and thus non-Sasakian, according to Theorem \ref{thm: sasakian nilmanifolds}) contact hard-Lefschetz nilmanifolds, in the negative. Our answer is indeed partial, as there are nilmanifolds admitting contact forms which are not invariant: see Remark \ref{obs: decomposable lie algebras are not contact}. 
\end{remark} 
\indent Combining Remark \ref{obs: rather trivial}, Theorem \ref{thm: sasakian nilmanifolds}, and Theorem \ref{thm: odd benson-gordon}, we obtain the following generalization to the examples found in \cite{Cagliari 1}.  
\begin{corollary} \label{cor: cagliari}
    Any non-Heisenberg nilmanifold endowed with an invariant contact form admits a compatible $K$-contact metric but does not admit a compatible (not necessarily invariant) Sasakian structure.  
\end{corollary}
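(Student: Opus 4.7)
The plan is to establish both parts of the corollary by directly combining three ingredients already available in the paper: the triviality of the $K$-contact condition in the presence of a nontrivial center (Remark \ref{obs: rather trivial}), the characterization of Sasakian nilmanifolds due to Cappelletti-Montano--De Nicola--Yudin (Theorem \ref{thm: sasakian nilmanifolds}), and the contact Benson--Gordon theorem proved above (Theorem \ref{thm: odd benson-gordon}). Notably, the $1$-Lefschetz result is not strictly needed for the statement itself (Theorem \ref{thm: sasakian nilmanifolds} would already rule out Sasakian structures), but it is the conceptual reason we single out the invariant-contact hypothesis and it fits the picture naturally.

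First I would set up notation: let $M = \Gamma \backslash N$ be a nilmanifold endowed with an invariant contact form $\eta$, and let $\mathfrak n$ denote the Lie algebra of $N$, with the induced contact form on $\mathfrak n$ still called $\eta$. Since $\mathfrak n$ is nilpotent and nonzero, its center is nontrivial, so by Proposition \ref{prop: center in contact Lie algebras} the Reeb vector $\xi$ generates $\mathfrak z(\mathfrak n)$; in particular $\ad_{\xi} = 0$.

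For the $K$-contact assertion I would invoke Remark \ref{obs: rather trivial}: one picks any compatible metric contact structure $(\eta, g, \Phi)$ on $\mathfrak n$ (whose existence was already noted in Section \ref{section: solvmanifolds and the contact Lefschetz condition.}), and observes that since $\ad_{\xi} = 0$, conditions (i)--(iii) of Proposition \ref{prop: $K$-contact equivalences, 2nd version} hold vacuously, so $(\eta, g, \Phi)$ is $K$-contact. Left-translation extends this structure to a left-invariant $K$-contact structure on $N$, which descends to $M$ since $\Gamma$ acts by left translations.

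For the negative Sasakian statement, I would argue by contraposition: if $M$ admitted any Sasakian structure, not necessarily invariant, then Theorem \ref{thm: sasakian nilmanifolds} would force $M$ to be a Heisenberg nilmanifold, contradicting the hypothesis. The main obstacle in the whole argument is thus not really present at this stage—it was absorbed into the proof of Theorem \ref{thm: odd benson-gordon} (and, upstream, into Theorem \ref{thm: sasakian nilmanifolds}); what remains here is only the bookkeeping of assembling those results and noting that the nontrivial-center hypothesis needed by Remark \ref{obs: rather trivial} is automatic in the nilpotent setting.
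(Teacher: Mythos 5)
Your proof is correct and follows essentially the same route as the paper, which simply combines Remark \ref{obs: rather trivial}, Theorem \ref{thm: sasakian nilmanifolds}, and Theorem \ref{thm: odd benson-gordon}. Your observation that Theorem \ref{thm: odd benson-gordon} is logically redundant here (Theorem \ref{thm: sasakian nilmanifolds} alone rules out Sasakian structures) is accurate and a fair reading of why the paper nevertheless cites it.
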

\indent We use the characterization given in Theorem \ref{thm: Benson-Gordon 2} in combination with Theorem \ref{thm: odd benson-gordon} to refine the description of contact $1$-Lefschetz Lie algebras with nontrivial center. 
\begin{corollary} \label{cor: boring benson-gordon}
    Let $(\g, \eta)$ be a contact unimodular Lie algebra with $\z(\g) = \R \xi$, arising as contactization of a symplectic unimodular Lie algebra $(\h, \omega)$. Denote $\n_{\g} := [\g,\g]_{\g}$ and $[\h, \h]_{\h}$. Assume further that $(\g, \eta)$ is $1$-Lefschetz. Then:
\begin{enumerate} [\rm (i)]
    \item $\n_{\g} = \R \xi \oplus \n_{\h}$ as vector spaces. Moreover, $\n_{\g}$ is the contactization of $\n_{\h}$ with associated $2$-cocycle the restriction of $\omega$ to $\n_{\h}$, with contact form given by restriction of $\eta$ to $\n_{\g}$.
    \item There is an even-dimensional abelian subalgebra $\a$ in $\g$ contained in $\h$ and satisfying $\g = \a \oplus \n_{\g}$ and $\h = \a \oplus \n_{\h}$ as vector spaces. Moreover, the restriction of $\omega$ to $\a$ is a symplectic form on $\a$.  
\end{enumerate} 
\end{corollary}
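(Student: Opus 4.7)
The plan is to derive the corollary as a direct consequence of combining Theorem \ref{thm: main, i guess} with the Benson-Gordon characterization of $1$-Lefschetz unimodular symplectic Lie algebras (Theorem \ref{thm: Benson-Gordon 2}). Since $(\g, \eta)$ is $1$-Lefschetz, Theorem \ref{thm: main, i guess} yields that $(\h, \omega)$ is $1$-Lefschetz, so Theorem \ref{thm: Benson-Gordon 2} applies: we obtain an even-dimensional abelian subalgebra $\a \subseteq \h$ complementary to $\n_\h$, and (from item (iv)) $\a$ and $\n_\h$ are $\omega$-orthogonal symplectic subspaces of $(\h,\omega)$. In particular, both $\omega|_\a$ and $\omega|_{\n_\h}$ are symplectic forms. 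These are the two key facts driving the rest of the argument.

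For part (i), Proposition \ref{prop: xi belongs in the commutator} already gives $\n_\g = \R\xi \oplus \n_\h$ as vector spaces, and does not use the $1$-Lefschetz hypothesis. To identify $\n_\g$ as a contactization, I would compute brackets using equation \eqref{eq: contactization bracket}: for $x, y \in \n_\h$, one has $[x,y]_\g = \omega(x,y)\xi + [x,y]_\h \in \R\xi \oplus \n_\h$, since $\n_\h$ is an ideal of $\h$. Combined with the centrality of $\xi$, this is exactly the contactization bracket on $\R\xi \oplus \n_\h$ with $2$-cocycle $\omega|_{\n_\h}$; since $\omega|_{\n_\h}$ is symplectic, this is a genuine contactization, and $\eta|_{\n_\g}$ is the corresponding contact form with Reeb vector $\xi$.

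For part (ii), I would take $\a$ as produced above: it is an even-dimensional abelian subalgebra of $\h$ with $\h = \a \oplus \n_\h$. A direct rearrangement yields
\begin{equation*}
\g = \R\xi \oplus \h = \R\xi \oplus \a \oplus \n_\h = \a \oplus (\R\xi \oplus \n_\h) = \a \oplus \n_\g,
\end{equation*}
and $\omega|_\a$ is symplectic by Theorem \ref{thm: Benson-Gordon 2}(iv). The main subtlety — in lieu of a true obstacle, given that Theorems \ref{thm: main, i guess} and \ref{thm: Benson-Gordon 2} do most of the heavy lifting — is in the correct parsing of the statement: the phrase \emph{abelian subalgebra $\a$ in $\g$} must be read as meaning that $\a$ is an abelian subalgebra of $\h$ viewed as a subspace of $\g$, since $\a$ is never an abelian subalgebra of $\g$ itself (indeed, $[x,y]_\g = \omega(x,y)\xi$ for $x,y \in \a$ is nonzero whenever $\omega|_\a$ is symplectic). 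This interpretive point should be made explicit in the write-up to prevent confusion.
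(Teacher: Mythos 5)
Your proposal is correct and follows essentially the same route as the paper: deduce that $(\h,\omega)$ is $1$-Lefschetz via Theorem \ref{thm: main, i guess}, invoke Theorem \ref{thm: Benson-Gordon 2} for the abelian complement $\a$ and the nondegeneracy of $\omega|_{\a}$ and $\omega|_{\n_\h}$, and use Proposition \ref{prop: xi belongs in the commutator} for the splitting $\n_\g = \R\xi \oplus \n_\h$. Your closing remark that $\a$ is abelian only as a subalgebra of $\h$ (indeed $[x,y]_\g = \omega(x,y)\xi \notin \a$) is a correct and worthwhile clarification of the statement that the paper's own proof does not address.
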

\begin{proof} Notice that $(\h, \omega)$ is $1$-Lefschetz as per Theorem \ref{thm: odd benson-gordon}. 
\begin{enumerate} [\rm (i)]
    \item The vector space decomposition was already observed in Proposition \ref{prop: xi belongs in the commutator}. The rest of the claim follows from the fact that Theorem \ref{thm: Benson-Gordon 2} ensures that the restriction of $\omega$ to $\n_{\h}$ is a symplectic form on $\n_{\h}$.
    \item The existence of a even-dimensional abelian subalgebra in $\h$ satisfying $\h = \a \oplus \n_{\h}$ as vector spaces and the fact that the restriction of $\omega$ to $\a$ is a symplectic form on $\a$ were already observed in Benson and Gordon's result, Theorem \ref{thm: Benson-Gordon 2}. The fact that $\a$ can be chosen so that $\g = \a \oplus \n_{\g}$ as vector spaces follows from (i).  \qedhere
\end{enumerate}
\end{proof}

\section{Examples of 1-Lefschetz contact solvmanifolds} \label{section: lattices}

    \indent In this section, we construct examples of compact contact solvmanifolds satisfying the $1$-Lefschetz contact condition. Some of these examples are also $2$-Lefschetz, while others are not. The Lie algebras associated with these solvmanifolds are almost nilpotent Lie algebras. Except for the isolated example in Section \ref{section: BG case}, which is the contactization of an already almost nilpotent symplectic Lie algebra appearing in \cite[Example 3]{BG2}, our examples are obtained as contactizations of symplectic almost abelian Lie algebras. We rely on the results of \cite{AG1} and \cite{AG2} concerning the Lefschetz condition for this class of Lie algebras.

    \indent Recall that a Lie algebra $\g$ is called \textit{almost nilpotent} if it has a codimension-one nilpotent ideal. In this case, $\g$ can be written as a semidirect product $\g = \R \ltimes_D \n$ for some $D \in \mathrm{Der}(\n)$. The simply connected Lie group $G$ corresponding to $\g$ can be written as $\R\ltimes_{\phi} N$, where $N$ is the simply connected Lie group corresponding to $\n$ and $\phi \in \mathrm{Aut}(N)$ is obtained by exponentiating $D$ in $N$. Such a group $G$ is called \textit{almost nilpotent} as well. Notice that all almost nilpotent Lie algebras are solvable, and the same is true for almost nilpotent Lie groups. In the particular case where $\n$ is an abelian Lie algebra (and hence $N$ is an abelian group) both $\g$ and $G$ are called \textit{almost abelian}. For almost nilpotent Lie algebras, there is a criterion ensuring the existence of lattices in the corresponding Lie groups.
\begin{proposition} \label{prop: yamada}
    Let $\g = \R \ltimes_D \n$ be a unimodular almost nilpotent Lie algebra. If there is a nonzero $t_0 \in \R$ and a rational basis $\mathcal{B} = \{X_1,\ldots,X_n\}$ of $\n$ for which the matrix of $\exp(t_0 D)$ has integer entries, then the corresponding Lie group $G$ admits lattices. Moreover, at least one such lattice is of the form $\Gamma = t_0 \Z \ltimes_\phi \exp^N(\Span_\Z \{X_1,\ldots,X_n\})$.
\end{proposition}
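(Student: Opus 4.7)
The plan is to realize $\Gamma$ as a semidirect product of the discrete subgroup $t_0\Z \subset \R$ with a $\phi$-invariant lattice in $N$, and then verify discreteness and cocompactness in $G = \R \ltimes_\phi N$. Existence of the lattice in $N$ comes from Malcev's rationality criterion, while the integer condition on the matrix of $\exp(t_0 D)$ is designed exactly to force $\phi$-invariance of this lattice.

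First, I would apply the Malcev criterion recalled in the paragraph preceding Proposition~\ref{prop: milnor} to produce a lattice $\Gamma_N$ in $N$. Since $\mathcal{B}$ is a rational basis of $\n$, a standard Baker--Campbell--Hausdorff argument---after possibly replacing the $X_i$ by a common integer multiple $m X_i$, which does not alter the matrix of the linear operator $\exp(t_0 D)$ in $\mathcal{B}$---shows that $\Gamma_N := \exp^N(\Span_\Z\{X_1,\ldots,X_n\})$ is a subgroup of $N$, and in fact a lattice.

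Second, I would verify that the automorphism $\phi := \exp(t_0 D) \in \mathrm{Aut}(N)$ restricts to an automorphism of $\Gamma_N$. Because $D$ is a derivation of $\n$, one has the intertwining identity
\begin{align*}
    \phi(\exp^N(X)) = \exp^N(\exp(t_0 D)\, X), \quad X \in \n.
\end{align*}
The integrality hypothesis on $\exp(t_0 D)$ in $\mathcal{B}$ gives at once $\exp(t_0 D)(\Span_\Z\mathcal{B}) \subseteq \Span_\Z \mathcal{B}$, and hence $\phi(\Gamma_N) \subseteq \Gamma_N$. The reverse inclusion is automatic: unimodularity of $\g = \R \ltimes_D \n$ forces $\tr D = 0$ (the other traces vanishing since $\n$ is nilpotent), so $\det \exp(t_0 D) = 1$, making the matrix of $\exp(t_0 D)$ an element of $\mathrm{SL}_n(\Z)$ in $\mathcal{B}$ whose inverse $\exp(-t_0 D)$ is therefore also integral.

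Third, I would assemble $\Gamma := t_0\Z \ltimes_\phi \Gamma_N$ inside $G$ and verify the two defining properties of a lattice. Discreteness is clear, since $t_0\Z \times \Gamma_N$ is discrete in the product $\R \times N$, which is globally diffeomorphic to $G$ via the semidirect-product structure. Cocompactness follows by exhibiting a compact fundamental domain: the image of $[0,t_0] \times K_N$ in $\Gamma\backslash G$, where $K_N \subset N$ is any compact fundamental domain for $\Gamma_N$ in $N$, surjects onto $\Gamma\backslash G$. This produces the advertised lattice. The main obstacle, as I see it, is the parenthetical subtlety in the first step---arranging that the concrete set $\exp^N(\Span_\Z\mathcal{B})$ really is a subgroup of $N$ rather than just contained in one, which requires controlling the denominators appearing in the Baker--Campbell--Hausdorff formula through the structure constants of $\mathcal{B}$. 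The rescaling $X_i \mapsto m X_i$ is the standard remedy, and it is harmless precisely because the integrality of $\exp(t_0 D)$ in $\mathcal{B}$ is invariant under uniform rescaling of the basis; everything else is routine semidirect-product bookkeeping.
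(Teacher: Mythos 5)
Your argument is essentially correct, but note that the paper does not prove Proposition~\ref{prop: yamada} at all: it is stated as a particular case of known results of Saito, Vinberg--Gorbatsevich--Shvartsman, and Yamada, with the reader referred to those sources. What you have written is, in substance, the standard proof underlying those references, and each of its three steps checks out. The Malcev/Raghunathan criterion produces the lattice $\Gamma_N$ in $N$; the naturality $\phi\circ\exp^N=\exp^N\circ\,\exp(t_0D)$ together with integrality gives $\phi(\Gamma_N)\subseteq\Gamma_N$; and your use of unimodularity is exactly right --- $\tr D=\tr(\ad_x)=0$ for $x$ the generator of the $\R$-factor, so $\det\exp(t_0D)=1$ and the inverse matrix is also integral, which is what makes $\phi$ restrict to an \emph{automorphism} of $\Gamma_N$ and hence makes $t_0\Z\ltimes_\phi\Gamma_N$ a genuine subgroup. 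The discreteness and cocompactness bookkeeping via the fundamental domain $[0,t_0]\times K_N$ is routine and correct.

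The one point worth emphasizing is the subtlety you yourself flag: for a merely rational basis, the set $\exp^N(\Span_\Z\{X_1,\ldots,X_n\})$ need not be a subgroup (already for the Heisenberg algebra with $[X,Y]=Z$, the Baker--Campbell--Hausdorff product introduces the term $\tfrac12[X,Y]\notin\Span_\Z\{X,Y,Z\}$), so the ``Moreover'' clause of the statement is only literally true after the uniform rescaling $X_i\mapsto mX_i$, which leaves the matrix of $\exp(t_0D)$ unchanged. Your remedy is the correct one, and the resulting lattice is splittable of the advertised shape with respect to the rescaled basis. So the proposal is a complete, self-contained proof of a statement the paper only quotes.
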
 

    \indent In Proposition \ref{prop: yamada}, $\exp(t_0D)$ is the matrix exponential in $\n \cong \R^{\dim \n}$, which coincides with $d_e(\phi(t_0))$ after a choice of basis. Consequently, there is an equivalent formulation of Proposition \ref{prop: yamada} at the Lie group level; however, we state it in the form most commonly used in practice. Lattices that respect the semidirect product decomposition of $G$, as in Proposition \ref{prop: yamada}, are called \textit{splittable}. 
        
    \indent Proposition \ref{prop: yamada} is a particular case of several more general results, including \cite[Theoréme 3]{Saito2}, \cite[Theorem 3.13]{VGS}, and \cite[Main Theorem]{Yamada}. The formulation given above is closest to that of the latter reference. It is worth pointing out that, although these results provide if-and-only-if criteria for the existence of lattices in terms of properties of $G$, none of them imply that \textit{all} lattices in $G$ are splittable.
    
    \indent For $k\in \N$, $k\geq 3$, the real numbers
\begin{align}\label{eq: tm}
    t_k:=\log \frac{k+\sqrt{k^2-4}}{2}, \quad \text{$k \in \N$ with $k \geq 3$}, 
\end{align} 
    \noindent play a fundamental role in the ensuing constructions. Note that $\alpha := \e^{t_k}$ and $\alpha^{-1} := \e^{-t_k}$ are the only roots of $p_k(x) := x^2 - kx + 1 \in \Z[x]$. From here, it is easy to establish that $\alpha^2 = k \alpha - 1$ and that $\alpha^3 = (k^2-1) \lambda - k$. The next fact is established by induction on $\ell$.  
\begin{lemma}\label{lemma: integers}
    If, for some $k \in \Z$, $\alpha$ and $\alpha^{-1}$ denote the roots of $p_k(x) = x^2-kx+1$ then, for any $\ell \in \N$, $\alpha^{\ell} + \alpha^{-\ell} \in \Z$.
\end{lemma}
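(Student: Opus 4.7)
The plan is to prove the claim by strong induction on $\ell$, using a two-term linear recurrence with integer coefficients for the sequence $s_\ell := \alpha^\ell + \alpha^{-\ell}$.

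First I would verify the base cases: $s_0 = 2 \in \Z$ is immediate, and $s_1 = \alpha + \alpha^{-1} = k \in \Z$ follows directly from Vieta's formulas applied to $p_k(x) = x^2 - kx + 1$, since the roots are $\alpha$ and $\alpha^{-1}$ (their product is $1$, their sum is $k$).

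Next I would establish the key recurrence. Multiplying $s_1 = \alpha + \alpha^{-1}$ by $s_\ell = \alpha^\ell + \alpha^{-\ell}$ and expanding yields
\begin{equation*}
    k \cdot s_\ell = (\alpha + \alpha^{-1})(\alpha^\ell + \alpha^{-\ell}) = (\alpha^{\ell+1} + \alpha^{-\ell-1}) + (\alpha^{\ell-1} + \alpha^{-\ell+1}) = s_{\ell+1} + s_{\ell-1},
\end{equation*}
so $s_{\ell+1} = k \, s_\ell - s_{\ell-1}$. Because both $s_0$ and $s_1$ lie in $\Z$ and the recurrence has integer coefficients, a routine induction shows $s_\ell \in \Z$ for every $\ell \in \N$.

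There is essentially no obstacle here: the only subtle point is recognizing that the identity $(\alpha+\alpha^{-1})(\alpha^\ell+\alpha^{-\ell}) = s_{\ell+1}+s_{\ell-1}$ uses $\alpha \cdot \alpha^{-1} = 1$, which is exactly the constant term of $p_k$. This also matches the computations done just before the lemma ($\alpha^2 = k\alpha - 1$ and $\alpha^3 = (k^2-1)\alpha - k$), which are the cases $\ell = 2$ and $\ell = 3$ of the recurrence rewritten in terms of $\alpha$ alone. In particular, no appeal to the specific value $k \geq 3$ is needed — the conclusion holds for any integer $k$.
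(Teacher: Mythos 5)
Your proof is correct and follows exactly the route the paper indicates (the paper simply states that the lemma "is established by induction on $\ell$" without supplying details, and the recurrence $s_{\ell+1}=k\,s_\ell-s_{\ell-1}$ with $s_0=2$, $s_1=k$ is the standard way to carry out that induction). Your closing observation that only $\alpha\cdot\alpha^{-1}=1$ and $\alpha+\alpha^{-1}=k$ are used, so the hypothesis $k\ge 3$ is irrelevant, is also accurate and consistent with the paper's phrasing "for some $k\in\Z$".
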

    \indent Except for the one in Section \ref{section: BG case}, all examples under consideration arise as contactization of almost abelian Lie algebras. A few remarks are in order. 

    \indent Let $\h_A := \R \ltimes_A \R^{2m+1}$ be an almost abelian Lie algebra. Here $A$ is a matrix thought to be acting on $\R^{2m+1}$. Choose a basis $\{f_1, f_2, x_1, \ldots, x_{2m}\}$ of $\h_A$ where $\ad_{f_1}|_{\R^{2m+1}} = A$ and
\begin{align*}
    \R^{2m+1} = \Span\{f_2, x_1, \ldots, x_{2m}\}, \quad \u_0 := \Span\{x_1, \ldots, x_{2m}\}.
\end{align*}    
    \indent To clarify, we are \textit{defining} $\u_0$. Take $A$ to be
\begin{align} \label{eq: matrixA}
    A = (0) \oplus A_0, \quad A_0 \in \mathfrak{sp}(m,\R)
\end{align}
    \noindent in this basis; in particular, $A$ is traceless, and so $\h_A$ is unimodular. Take a symplectic form $\omega$ on $\h_A$ such that
\begin{align*}
    f^1 \wedge f^2 + \omega_0, \quad  \text{$\omega_0 \in \alt^2 \u_0^*$ is symplectic on $\u_0$}.
\end{align*}
    \noindent Actually, \textit{all} $\h_A$ as above are symplectic, and also \textit{any} symplectic form $\omega$ on $\h_A$ is as described above. Proof for both these claims can be found in \cite[Proposition 5.3, Theorem 5.6]{etal} and \cite[Proposition 4.1 and Remark 4.2]{LW}. While there are more general matrices $A$ for which $\h_A$ is symplectic, none of them is $1$-Lefschetz (due to \cite[Theorem 4.24]{AG2}) and it is unclear whether they have lattices, and so we omit them from the discussion. Often we employ the notation
\begin{align*}
    u_i := x_i \text{ for $1 \leq i \leq m$}, \quad v_j := x_{m+j} \text{ for $1 \leq j \leq m$},
\end{align*}
\noindent with corresponding dual basis denoted by superscripts. We also denote by $J_m(\lambda)$ the elementary Jordan block of size $m \times m$ with eigenvalue $\lambda \in \mathbb{R}$, 
\begin{align} \label{eq: elementary jordan block}
    J_m(\lambda) = 
    \left[
	\begin{array}{c c c c c c}      
		\lambda & 0 & 0 & \cdots & 0 & 0\\ 
        1 & \lambda & 0 & \cdots & 0 & 0\\ 
        0 & 1 & \lambda & \cdots & 0 & 0 \\
        \vdots & \vdots & \vdots & & \vdots & \vdots \\
        0 & 0 & 0 & \cdots & \lambda & 0 \\
        0 & 0 & 0 & \cdots & 1 & \lambda
	\end{array} \right].
\end{align}
\noindent Set $\Gamma := x^1 \wedge \cdots \wedge x^{2m}$, which is the top form in $\u_0$, and define 
\begin{gather*}
    \Gamma_a := x^1 \wedge \cdots \wedge \widehat{x^a} \wedge \cdots \wedge x^{2m} \in \alt^{2m-1} \u_0^*, \\
    \Gamma_{b, c} := x^1 \wedge \cdots \wedge \widehat{x^b} \wedge \cdots \wedge \widehat{x^c} \wedge \cdots \wedge x^{2m}  \in \alt^{2m - 2} \u_0^*, 
\end{gather*}
\noindent where $b < c$. The symbol $\widehat{x^a}$ means that the $1$-form $x^a$ does not appear in the expression for $\Gamma_a$, and similarly for $\Gamma_{b,c}$. Also, set $\delta := f^1 \wedge f^2$. This is a similar notation to that used by the authors in \cite{AG1}. 

\subsection{A 2-Lefschetz example} \label{section: semisimple case}

    \indent Let $m\geq 1$ and $k_1, \ldots, k_m\geq 3$ be integers. Consider the matrix 
\begin{align*}
    A_0 := \diag(t_{k_1}, \ldots, t_{k_m}) \oplus \diag(- t_{k_1}, \ldots, - t_{k_m}) 
\end{align*}
    \noindent where each $t_{k_j}$ is given as in \eqref{eq: tm}, not necessarily all different. Define $A = (0) \oplus A_0$ as in \eqref{eq: matrixA}. As pointed out earlier, we work in the basis $\{u_1,\ldots,u_m,v_1,\ldots, v_m\}$ of $\R^{2m}$. We set $n := m + 1$. For all $2 \leq k \leq n$, set
\begin{gather*}
    \gamma_1 := \delta \in \alt^2 \h_A^*, \quad \gamma_k := u^k \wedge v^k \in \alt^2 \h_A^*, \\
    \overline{\gamma}_1 := \Gamma \in \alt^{2n-2} \h_A^*, \quad \overline{\gamma}_k := \Gamma_{k,k+n} \in \alt^{2n-2} \h_A^*. 
\end{gather*}
    \noindent The Lie algebra $\h_A = \R f_1\ltimes_A \R^{2m+1}$ carries a symplectic form
\begin{align*}
    \omega := \sum_{l=1}^n \gamma_l = \delta + \omega_0,
\end{align*}
    \noindent which is unique up to equivalence (see \cite[Theorem 1.1]{CM}); moreover, $\omega$ is hard-Lefschetz (see \cite[Corollary 1.5]{Kasuya1}). Notice that we are using a notation similar but ultimately different from the one in \cite{AG1}. There is another set of useful $2$-forms and $(2n-2)$-forms, defined for each $2 \leq k \leq n$ as follows: 
\begin{gather*}
    \sigma_1 := \sum_{l=1}^n \gamma_l, \quad \sigma_k := \gamma_k - \gamma_1, \\
    \overline{\sigma}_1 := \sum_{l=1}^n (-1)^{l-1} \overline{\gamma}_l, \quad \overline{\sigma}_k := \overline{\gamma}_k - \overline{\gamma}_1.
\end{gather*}
    \noindent Notice that $\sigma_1 = \omega$. It is straighforward to check, for all $2 \leq k \leq n$, that
\begin{align*}
    \overline{\sigma}_l = \sigma_1 \wedge \dots \wedge \widehat{\sigma}_l \wedge \cdots \wedge \sigma_n;
\end{align*}
    \noindent moreover, all relations are invertible, since
\begin{gather*}
    \gamma_1 = \frac{1}{n} \left(\sigma_1 - \sum_{l=2}^n \sigma_l\right), \quad \gamma_k = \sigma_k + \gamma_1, \\
    \overline{\gamma}_1 = \frac{2}{3 + (-1)^n} \left( \overline{\sigma}_1 + \sum_{l=2}^n (-1)^l \overline{\sigma}_l \right), \quad \overline{\gamma}_k := \overline{\sigma}_k + \overline{\gamma}_1,
\end{gather*}
    \noindent for all $2 \leq k \leq n$. Define
\begin{align*}
    W := \Span\{\sigma_k \mid 2 \leq k \leq n\}, \quad \overline{W} :=  \Span\{ \overline{\sigma}_k \mid 2 \leq k \leq n\}.
\end{align*}
    \noindent Recall from \cite[Lemma 4.1]{AG1} that $\gamma_i \in \alt^2 \h_A^*$ are closed and non-exact for all $1 \leq i,j \leq n$. Whether or not they are the only closed non-exact $2$-forms depends on the choice of the numbers $t_{k_1}, \ldots, t_{k_m}$: for example, when they are taken to be linearly independent over $\Z$, they are the only such forms; and when they are all equal then 
\begin{align} \label{eq: thetas_{i|j}}
    \theta_{i|j} := u^i \wedge v^j \in \alt^2 \u_0^* \text{ for $1 \leq i,j \leq n$ with $i \neq j$}
\end{align}
    \noindent is closed and non-exact for all such $i$, $j$, and not cohomologous to any $\gamma_k$. A similar discussion applies to $(2n-2)$-forms as well. Combining the discussion in \cite[Section 4]{AG1} with our own remarks above, we get that     
\begin{gather*}
    H^2(\h_A) = \Span\{ \gamma_l \mid 1 \leq l \leq n\} \oplus U = \Span \{\sigma_l \mid 1 \leq l \leq n\} \oplus U = \R \sigma_1 \oplus W \oplus U, \\ 
    H^{2n-2}(\h_A) = \Span\{ \overline{\gamma}_l \mid 1 \leq l \leq n \} \oplus \overline{U} = \Span\{ \overline{\sigma}_l \mid 1 \leq l \leq n \} \oplus \overline{U} = \R \overline{\sigma}_1 \oplus \overline{W} \oplus \overline{U}.
\end{gather*}
    \noindent Notice that we are indulging in the slight abuse of language of refering to cohomology classes by suitably chosen representatives of such classes. Here $U$ is the set of $2$-forms as in equation \eqref{eq: thetas_{i|j}} that are closed and non-exact; as observed above, it can have no nonzero $2$-forms (see \cite[Theorem 4.5]{AG1}), all $2$-forms $\theta_{i|j}$ (see \cite[Theorem 4.7]{AG1}), or something in between. Also, $\overline{U}$ is the Poincaré dual of $U$ in $H^{2n-2}(\h_A)$. We do not need an explicit description of neither $U$ nor $\overline{U}$ in what follows.  
    
    \indent Let $(\g_A, \eta)$ denote the contactization of $(\h_A,\omega)$. According to Theorem \ref{thm: main, i guess}, $(\g_A,\eta)$ is $1$-Lefschetz because $(\h_A, \omega)$ is too. It turns out that it is also $2$-Lefschetz. 
\begin{lemma} \label{lemma: 2-cohomologia}
    In the notation described above, 
\begin{align*}
    H^2(\g_A) = W \oplus U, \quad H^{2n-1}(\g_A) = \eta \wedge \overline{W} \oplus \eta \wedge \overline{U}.
\end{align*}
    \noindent In particular, $\dim H^2(\g_A) = \dim H^{2n-1}(\g_A) = n-1$. 
\end{lemma}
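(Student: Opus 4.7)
My plan is to build a Gysin-type long exact sequence associated with the contactization $\g_A = \R\xi \oplus_\omega \h_A$ and then combine it with the hard-Lefschetz property of $(\h_A,\omega)$ noted just before the statement. As developed in Section~\ref{section: the 1-Lefschetz condition for contact Lie algebras arising from contactization}, every $\alpha \in \alt^k\g_A^*$ decomposes uniquely as $\alpha = \eta \wedge \alpha_L + \alpha_R$ with $\alpha_L$, $\alpha_R$ pulled back from $\h_A$, and the inclusion $\pi^*\alt^*\h_A^* \hookrightarrow \alt^*\g_A^*$ is a subcomplex by Lemma~\ref{lemma: they are the same differential}. Taking $\iota_\xi$ as the quotient map yields the short exact sequence of cochain complexes
\begin{equation*}
    0 \to \pi^*\alt^*\h_A^* \hookrightarrow \alt^*\g_A^* \xrightarrow{\iota_\xi} \pi^*\alt^{*-1}\h_A^* \to 0,
\end{equation*}
and a direct check using $d_{\g_A}\eta = -\omega$ shows that the connecting homomorphism in the induced long exact sequence is (up to sign) wedge with $[\omega]$:
\begin{equation*}
    \cdots \to H^{k-1}(\h_A) \xrightarrow{\omega \wedge -} H^{k+1}(\h_A) \xrightarrow{\pi^*} H^{k+1}(\g_A) \xrightarrow{[\iota_\xi]} H^{k}(\h_A) \xrightarrow{\omega \wedge -} H^{k+2}(\h_A) \to \cdots.
\end{equation*}

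For the $H^2$ statement, I would specialize to $k+1=2$. The leftmost map sends $1 \in H^0(\h_A)$ to $[\omega]=[\sigma_1]$, and the rightmost map $\omega\wedge-\colon H^1(\h_A)\to H^3(\h_A)$ is injective since the hard-Lefschetz isomorphism $\omega^{n-1}\wedge-\colon H^1(\h_A)\to H^{2n-1}(\h_A)$ factors through it. Exactness then forces $\pi^*$ to descend to an isomorphism $H^2(\h_A)/\R[\sigma_1] \xrightarrow{\sim} H^2(\g_A)$, and the complement $W\oplus U$ of $\R[\sigma_1]$ in the decomposition $H^2(\h_A)=\R\sigma_1\oplus W\oplus U$ gives $H^2(\g_A)=W\oplus U$.

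For the $H^{2n-1}$ statement, I would specialize to $k+1=2n-1$. Hard-Lefschetz now gives that $\omega\wedge-\colon H^{2n-3}(\h_A)\to H^{2n-1}(\h_A)$ is surjective (writing $\omega^{n-1}=\omega\wedge\omega^{n-2}$ and using that $H^{2n-1}(\h_A)=\omega^{n-1}\wedge H^1(\h_A)$), so $\pi^*$ vanishes in this degree and $[\iota_\xi]$ yields an isomorphism $H^{2n-1}(\g_A) \xrightarrow{\sim} \ker(\omega\wedge-\colon H^{2n-2}(\h_A)\to H^{2n}(\h_A))$. Since $\h_A$ is unimodular, Lemma~\ref{lemma: H2n-1 is almost easy} identifies $H^{2n}(\h_A)$ with $\alt^{2n}\h_A^*$ (no nonzero exact top forms), so being in the kernel is equivalent to the form-level vanishing $\omega\wedge\alpha=0$. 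Because the decomposition $H^{2n-2}(\h_A) = \R\overline{\sigma}_1 \oplus \overline{W}\oplus\overline{U}$ is Poincar\'e-dual to $\R\sigma_1\oplus W\oplus U$ via the pairing $\gamma_l\wedge\overline{\gamma}_k \propto \delta_{lk}\,\Omega_{\h}$ coming from~\cite{AG1}, I get $\omega\wedge\overline{\sigma}_k = \sum_l \gamma_l\wedge(\overline{\gamma}_k-\overline{\gamma}_1)=0$ at the form level for $2\leq k\leq n$, with a parallel computation for $\overline{U}$. Dimensions match, so the kernel is exactly $\overline{W}\oplus\overline{U}$, and the inverse to $[\iota_\xi]$ is $\beta\mapsto[\eta\wedge\beta]$, which is a bona fide cocycle in $\g_A$ precisely because $d_{\g_A}(\eta\wedge\beta)=-\omega\wedge\beta=0$ at the form level. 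This gives $H^{2n-1}(\g_A) = \eta\wedge\overline{W}\oplus\eta\wedge\overline{U}$, and the final dimension assertion is immediate.

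The main obstacle will be the explicit form-level vanishing $\omega\wedge\overline{\sigma}_k=0$ (and its counterpart on $\overline{U}$); everything else is bookkeeping with the Gysin sequence and hard-Lefschetz. This vanishing is really a Poincar\'e-duality statement about the specific choice of representatives introduced above and in~\cite{AG1}, and must be verified by direct computation with the $\gamma_l,\overline{\gamma}_k$ bases.
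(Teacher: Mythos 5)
Your proof is correct, but it takes a genuinely different route from the paper's. The paper argues directly at the cochain level: it computes $Z^2(\g_A)=Z^2(\h_A)$ (using that $\omega\wedge\mu=0$ forces $\mu=0$ for $1$-forms $\mu$, a pointwise nondegeneracy fact) and $B^2(\g_A)=B^2(\h_A)\oplus\R\omega$, concluding $H^2(\g_A)\cong H^2(\h_A)/\R\omega$; for the top part it exhibits the closed forms $\eta\wedge\overline{\sigma}_l$ and invokes Poincar\'e duality on $\g_A$ itself, as in Lemma~\ref{lemma: H2n-1 is easy for unimodular}. You instead package everything into the Gysin long exact sequence of the central extension, which is valid here: the subcomplex/quotient splitting via $\iota_\xi$ and the identification of the connecting map with $\pm\,\omega\wedge-$ are exactly as you say, and the hard-Lefschetz property of $(\h_A,\omega)$ (recorded in the paper just before the lemma, via Kasuya) gives the injectivity of $\omega\wedge-$ on $H^1(\h_A)$ and the surjectivity onto $H^{2n-1}(\h_A)$ that you need; note both steps implicitly use $n\geq 2$, which holds since $m\geq 1$. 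What your approach buys is uniformity and extra information: it computes every $H^k(\g_A)$ as an extension of $\ker(\omega\wedge-)$ by $\operatorname{coker}(\omega\wedge-)$, whereas the paper's argument is tailored to degrees $2$ and $2n-1$. What it costs is that your degree-$2$ step leans on a cohomological injectivity where the paper gets away with linear algebra, and your degree-$(2n-1)$ step still requires the same form-level vanishing $\omega\wedge\overline{\sigma}_k=0$ (and its analogue on $\overline{U}$) that the paper uses; when checking $\sum_l\gamma_l\wedge(\overline{\gamma}_k-\overline{\gamma}_1)=0$ you should verify that the nonzero pairings $\gamma_l\wedge\overline{\gamma}_l$ all equal the \emph{same} multiple of $\Omega_{\h}$, not merely a nonzero one, so that the sum actually telescopes. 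One further caveat inherited from the paper rather than introduced by you: the final dimension count $\dim H^2(\g_A)=n-1$ is only literally correct when $U=0$; in general both proofs yield $\dim H^2(\g_A)=\dim H^{2n-1}(\g_A)=n-1+\dim U$.
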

\begin{proof}
    According to Lemma \ref{lemma: they are the same differential}, both $d_{\g}$ and $d_{\h}$ coincide on forms pulled back from $\h_A$. Hence, we draw the following two conclusions:
\begin{itemize}
    \item $Z^2(\g_A) = Z^2(\h_A)$: Lemma \ref{lemma: they are the same differential} provides the easy inclusion; for the other one, notice that for any $\mu \in \alt^1 \h_A^*$ we have
\begin{align*}
    d_{\g}(\eta \wedge \mu) = - \omega \wedge \mu - \eta \wedge d_{\h} \mu \in \alt^3 \h_A^* \oplus \eta \wedge \alt^2\h_A^*,
\end{align*}
    \noindent and so $\eta \wedge \mu$ is $d_{\g}$-closed if and only if both $\omega \wedge \mu$ and $\eta \wedge d_{\h} \mu$ are zero separately, and both conditions imply that $\mu = 0$. So all closed $2$-forms on $\g_A$ are precisely the ones pulled back from $\h_A$.
    \item $B^2(\g_A) = B^2(\h_A) \oplus \R \omega$: Notice that all of the $1$-forms on $\g_A$ are either $\eta$ or pulled back from $\h_A$, and also $d_{\g} \eta = - \omega$. The rest follows from Lemma \ref{lemma: they are the same differential}.
\end{itemize}
    \noindent Therefore $H^2(\g_A) \cong H^2(\h_A)/ \R\omega$, and thus $H^2(\g_A) \cong W \oplus U$. Now,
\begin{align*}
    d_{\g} (\eta \wedge \overline{\sigma}_l) = - \omega \wedge \overline{\sigma}_l - \eta \wedge d_{\h} \overline{\sigma}_l = - \sigma_1 \wedge \overline{\sigma}_l = 0 \text{ for all $2 \leq l \leq n$}, 
\end{align*}
    \noindent and so $\{ \eta \wedge \overline{\sigma}_l \mid 2 \leq l \leq n \}$ is a space of closed $(2n-1)$-forms on $\g_A$ of the same dimension as $H^2(\g_A)$. The isomorphism $H^{2n-1}(\g_A) = \eta \wedge \overline{W} \oplus \eta \wedge \overline{U}$ then follows from a direct application of Poincaré duality, as outlined in the proof of Lemma \ref{lemma: H2n-1 is easy for unimodular} (or as in \cite[Theorem 4.14]{AG2}). 
\end{proof}
    \indent Let $L^{n-2}:H^2(\h_A) \to H^{2n-2}(\h_A)$ be the $2$-Lefschetz map on $\h_A$, defined as $L^{n-2}\beta := \omega^{n-2} \wedge \beta$. According to \cite[Corollary 5.6 and Corollary 5.13]{AG1}, $L^{n-2}(U) = \overline{U}$ and 
\begin{align*}
    L^{n-2}\gamma_i = \sum_{j \neq i} \overline{\gamma}_j = \sum_{j = 1}^n \overline{\gamma}_j - \overline{\gamma}_i \text{ for all $1 \leq i \leq n$}.
\end{align*}
    \noindent Hence, for all $2 \leq k \leq n$, it follows that
\begin{gather*}
    L^{n-2}\sigma_1 = \sum_{i=1}^n L^{n-2}\gamma_i = \sum_{i=1}^n \left(  \sum_{j = 1}^n \overline{\gamma}_j - \overline{\gamma}_i \right) = (n-1)  \sum_{l = 1}^n \overline{\gamma}_l, \\
    L^{n-2}\sigma_k = L^{n-2}\gamma_k - L^{n-2}\gamma_1 = \left( \sum_{j = 1}^n \overline{\gamma}_j - \overline{\gamma}_k \right) - \left( \sum_{j = 1}^n \overline{\gamma}_j - \overline{\gamma}_1 \right) = \overline{\gamma}_1 - \overline{\gamma}_k = - \overline{\sigma}_k.
\end{gather*}  
    \noindent Notice in particular that $L^{n-2}(W) = \overline{W}$; and, as observed above, $L^{n-2}(U) = \overline{U}$. Thus, the restriction $L^{n-2} \vert_{W \oplus U} \to L^{n-2}(W \oplus U)$ is an isomorphism. Notice that it is not true that $L^{n-2}$ sends $\sigma_1$ to $\overline{\sigma}_1$, but this is not issue since there is no reason to expect that $L$ respects the vector space decompositions of $H^2(\h_A)$ and $H^{2n-2}(\h_A)$.  
\begin{proposition} \label{prop: semisimple is 2-lefschetz}
    $(\g_A, \eta)$ is $2$-lefschetz.
\end{proposition}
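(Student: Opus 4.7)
By Definition~\ref{def: contact lefschetz condition}, the task is to show that the degree-$2$ Lefschetz relation $\mathcal{R}_{\mathrm{Lef}_2}$ is the graph of an isomorphism $\mathrm{Lef}^{n-2}\colon H^2(\g_A)\to H^{2n-1}(\g_A)$. My plan is to work with the canonical representatives of $H^2(\g_A)=W\oplus U$ furnished by Lemma~\ref{lemma: 2-cohomologia}. These lie in $\alt^2\h_A^*\subset\alt^2\g_A^*$ and are therefore automatically $\xi$-horizontal, which discharges half of the conditions appearing in the Lefschetz relation.

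The next step is to verify primitivity at the form level, i.e., $(d\eta)^{n-1}\wedge\beta=\pm\omega^{n-1}\wedge\beta=0$ on $\g_A$ for every $\beta\in W\oplus U$. For $\beta=\sigma_k=\gamma_k-\gamma_1$ with $2\leq k\leq n$, the pairwise commuting summands of $\omega=\sum_{l=1}^n\gamma_l$ yield $\omega^{n-1}\wedge\gamma_l=\omega^n/n$ for every $l$, by the multinomial expansion combined with $\gamma_l^2=0$; hence $\omega^{n-1}\wedge\sigma_k=0$. For $\beta\in U\subset\alt^2\u_0^*$, a short direct wedging computation works (or, equivalently, $W\oplus U$ has the correct codimension in $H^2(\h_A)$ to be the kernel of $L^{n-1}\colon H^2(\h_A)\to H^{2n}(\h_A)$, $\h_A$ being hard-Lefschetz), so that $\omega^{n-1}\wedge\beta=0$. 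With these representatives in hand, I would then define $\mathrm{Lef}^{n-2}([\beta])=[\eta\wedge(d\eta)^{n-2}\wedge\beta]=(-1)^{n-2}[\eta\wedge L^{n-2}\beta]$, where now $L^{n-2}$ denotes the symplectic Lefschetz operator on $\h_A$. The identities $L^{n-2}\sigma_k=-\overline{\sigma}_k$ and $L^{n-2}\vert_U\colon U\to\overline{U}$ recalled just above the proposition show that $L^{n-2}$ maps $W\oplus U$ isomorphically onto $\overline{W}\oplus\overline{U}$; post-composing with the isomorphism $\overline{W}\oplus\overline{U}\to\eta\wedge\overline{W}\oplus\eta\wedge\overline{U}=H^{2n-1}(\g_A)$ from Lemma~\ref{lemma: 2-cohomologia} delivers the bijectivity of $\mathrm{Lef}^{n-2}$.

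The hard part will be verifying that $\mathcal{R}_{\mathrm{Lef}_2}$ is actually a graph, i.e.\ that two closed, $\xi$-horizontal, primitive representatives $\beta_1,\beta_2$ of the same class in $H^2(\g_A)$ produce cohomologous images under $\eta\wedge L^{n-2}$. Writing $\beta_1-\beta_2=d\gamma$ with $\gamma=a\eta+\gamma_h$ ($a\in\R$ and $\gamma_h\in\alt^1\h_A^*$), a Leibniz-type manipulation using $d\eta=-\omega$ and the exactness $\omega^{n-1}=-d(\eta\wedge\omega^{n-2})$ on $\g_A$ reduces the difference in $H^{2n-1}(\g_A)$ to a multiple of $[\omega^{n-1}\wedge\gamma]$. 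The primitivity condition on $d\gamma$ couples $a$ to $\gamma_h$ via $\omega^{n-1}\wedge d_{\h}\gamma_h=a\omega^n$ and, together with the identity $\omega^n=-d(\eta\wedge\omega^{n-1})$ on $\g_A$, forces the closed form $\omega^{n-1}\wedge\gamma_h+a\eta\wedge\omega^{n-1}$ to represent the trivial class. I expect the sign and exactness bookkeeping in this verification to be the most delicate part of the argument, although it is ultimately a consequence of the symplectic Hodge theory already powering the hard-Lefschetz property of $(\h_A,\omega)$.
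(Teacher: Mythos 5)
Your argument follows the same route as the paper's proof: both use Lemma~\ref{lemma: 2-cohomologia} to identify $H^2(\g_A)=W\oplus U$ and $H^{2n-1}(\g_A)=\eta\wedge\overline{W}\oplus\eta\wedge\overline{U}$, check that the representatives in $W\oplus U$ are $\xi$-horizontal and primitive (your identity $\omega^{n-1}\wedge\gamma_l=\omega^n/n$ is a repackaging of the paper's expansion of $\omega^{n-1}$ as $(n-1)!\sum_l\overline{\gamma}_l$, and the paper handles $U$ by the same direct wedging you mention), and then deduce bijectivity from $L^{n-2}(W)=\overline{W}$ and $L^{n-2}(U)=\overline{U}$ composed with the isomorphism $\overline{W}\oplus\overline{U}\to\eta\wedge(\overline{W}\oplus\overline{U})$. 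The one place where you go beyond the printed proof is the verification that $\mathcal{R}_{\mathrm{Lef}_2}$ is genuinely a graph, i.e.\ that an exact, $\xi$-horizontal, primitive $2$-form $\beta$ satisfies $[\eta\wedge L^{n-2}\beta]=0$; the paper only exhibits good representatives and asserts well-definedness. Your sketch of this extra step is in the right direction but stops short of a proof, and two of its ingredients deserve to be made precise. First, writing $\beta=d_{\g_A}(a\eta+\gamma_h)=-a\omega+d_{\h_A}\gamma_h$, primitivity gives $a\,\omega^n=\omega^{n-1}\wedge d_{\h_A}\gamma_h=d_{\h_A}(\omega^{n-1}\wedge\gamma_h)$, which vanishes by unimodularity (Lemma~\ref{lemma: H2n-1 is almost easy}); hence $a=0$ outright, and the Leibniz manipulation leaves exactly the class $-[\omega^{n-1}\wedge\gamma_h]\in H^{2n-1}(\g_A)$. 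Second, the vanishing of this class is not automatic from \emph{symplectic Hodge theory} in the vague sense you invoke, but it does follow from the hard-Lefschetz property of $(\h_A,\omega)$: since $L\colon H^{2n-3}(\h_A)\to H^{2n-1}(\h_A)$ is surjective, every closed $\xi$-horizontal $(2n-1)$-form $\nu$ can be written as $-\omega\wedge\lambda+d_{\h_A}\mu$ with $\lambda$ closed, whence $\nu=d_{\g_A}(\eta\wedge\lambda+\mu)$ is exact on $\g_A$. With those two supplements your argument is complete and, on this point, somewhat more careful than the paper's.
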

\begin{proof}
    Since $H^2(\g_A) = W \oplus U$ due to Lemma \ref{lemma: 2-cohomologia}, it follows that all clases in $H^2(\g_A)$ have $\xi$-horizontal representatives. Moreover, those representatives can be chosen to be primitive, since
\begin{align*}
    L^{n-1} \sigma_k &= \omega^{n-1} \wedge \sigma_k = \left( \sum_{l=1}^n \gamma_l \right)^{n-1} \wedge (\gamma_k - \gamma_1) \\
    &= (n-1)! \left( \sum_{l=1}^n \overline{\gamma}_l \right) \wedge (\gamma_k - \gamma_1) \\
    &= (n-1)! \left( \overline{\gamma}_k \wedge \gamma_k - \overline{\gamma}_1 \wedge \gamma_1 \right) = 0
\end{align*}
    \noindent for all $2 \leq k \leq n$. That is, $L^{n-1}(W) = 0$. Also,
\begin{align*}
    L^{n-1} \theta_{i|j} = (n-1)! \left( \sum_{l=1}^n \overline{\gamma}_l \right) \wedge (u^i \wedge v^j) = 0
\end{align*}
    \noindent for all $1 \leq i,j \leq n$ such that $i \neq j$, since each term in $\left( \sum_{l=1}^n \overline{\gamma}_l \right)$ is divisible by either $u^i$ or $v^j$ (or both) if $i \neq j$. That is, $L^{n-1}(U) = 0$. This means that the operator $\mathrm{Lef}^2\colon H^2(\g_A) \to H^{2n-1}(\g_A)$ given by $\mathrm{Lef}^2( [\beta] ) := [\eta \wedge L^{n-2} \beta]$ is well defined. Moreover, since $L^{n-2}(W) = \overline{W}$ as remarked above, certainly $\eta \wedge L^{n-2}(W) = \eta \wedge \overline{W}$. Thus, according to Lemma \ref{lemma: 2-cohomologia}, $\mathrm{Lef}^2$ is surjective; since both $H^2(\g_A)$ and $H^{2n-1}(\g_A)$ have the same dimension due to the same Lemma, it follows that $\mathrm{Lef}^2$ is bijective.  
\end{proof}
    \indent Denote by $G_A$ the simply connected Lie group corresponding to $\g_A$.
\begin{proposition} \label{prop: semisimple has lattices}
    For every choice of $k_1, \ldots, k_m$, the solvable Lie group $G_A$ admits lattices. 
\end{proposition}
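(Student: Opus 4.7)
The plan is to verify the criterion of Proposition~\ref{prop: yamada}. First, I would identify $\g_A$ as almost nilpotent by writing $\g_A = \R f_1 \ltimes_D \n$, where $\n := \Span\{\xi, f_2, u_1, v_1, \ldots, u_m, v_m\}$ is the codimension-one ideal complementary to $\R f_1$. Applying the contactization formula $[x,y]_{\g_A} = \omega(x,y)\xi + [x,y]_{\h_A}$ together with $\omega = \delta + \omega_0$, the only nontrivial brackets inside $\n$ are $[u_i, v_i] = \xi$, so $\n$ is a two-step nilpotent ideal, isomorphic as a Lie algebra to $\h_{2m+1} \oplus \R f_2$ with both $\xi$ and $f_2$ central in $\n$. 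The derivation $D := \ad_{f_1}|_\n$ is then block diagonal: it acts as $\bigl(\begin{smallmatrix} 0 & 1 \\ 0 & 0 \end{smallmatrix}\bigr)$ on $\Span\{\xi, f_2\}$, and as $\diag(t_{k_i}, -t_{k_i})$ on each eigenplane $\Span\{u_i, v_i\}$.

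Next, I would take $t_0 = 1$ and build a rational basis of $\n$ block by block. The $\Span\{\xi, f_2\}$ block already contributes $\exp(D)|_{\Span\{\xi, f_2\}} = \bigl(\begin{smallmatrix} 1 & 1 \\ 0 & 1 \end{smallmatrix}\bigr)$, integer in $(\xi, f_2)$. For each $i$, setting $\alpha_i := e^{t_{k_i}}$, the matrix of $\exp(D)|_{\Span\{u_i,v_i\}}$ in $(u_i, v_i)$ is $\diag(\alpha_i, \alpha_i^{-1})$; by construction, $\alpha_i$ and $\alpha_i^{-1}$ are the two roots of $p_{k_i}(x) = x^2 - k_i x + 1 \in \Z[x]$, so this matrix is conjugate over $\R$ to the integer companion matrix $M_{k_i} := \bigl(\begin{smallmatrix} 0 & -1 \\ 1 & k_i \end{smallmatrix}\bigr)$. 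Explicitly, with $\tilde X_i := u_i + v_i$ and $\tilde Y_i := \alpha_i u_i + \alpha_i^{-1} v_i$, the relation $\alpha_i^2 = k_i \alpha_i - 1$ yields $\exp(D) \tilde X_i = \tilde Y_i$ and $\exp(D) \tilde Y_i = -\tilde X_i + k_i \tilde Y_i$, so $\exp(D)$ has matrix $M_{k_i}$ in the basis $(\tilde X_i, \tilde Y_i)$.

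The subtle point is that $[\tilde X_i, \tilde Y_i]_{\g_A} = (\alpha_i^{-1} - \alpha_i)\xi = -\sqrt{k_i^2 - 4}\,\xi$, which is irrational. To remedy this, I would rescale: set $X_i := \lambda_i \tilde X_i$ and $Y_i := \lambda_i \tilde Y_i$ with $\lambda_i := (\alpha_i - \alpha_i^{-1})^{-1/2}$ (real, since $\alpha_i > 1$ as $k_i \geq 3$). Then $[X_i, Y_i] = -\xi$, a rational (in fact, integer) multiple of $\xi$; crucially, rescaling both vectors of a basis by a single nonzero scalar does not affect the matrix of a linear operator restricted to their span, so $\exp(D)$ still has matrix $M_{k_i}$ in $(X_i, Y_i)$. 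Assembling, the basis $\mathcal{B} := \{\xi, f_2, X_1, Y_1, \ldots, X_m, Y_m\}$ of $\n$ has structure constants in $\{0, \pm 1\}$, hence rational, and the matrix of $\exp(D)$ in $\mathcal{B}$ is the integer block diagonal matrix with blocks $\bigl(\begin{smallmatrix} 1 & 1 \\ 0 & 1 \end{smallmatrix}\bigr), M_{k_1}, \ldots, M_{k_m}$. Proposition~\ref{prop: yamada} then produces a splittable lattice in $G_A$. The main obstacle is precisely the tension between the two requirements: conjugating $\exp(D)$ to its integer companion form requires an irrational change of basis, which spoils the rationality of the Lie bracket inherited from the contactization; the rescaling step resolves this by exploiting that we are working over $\R$.
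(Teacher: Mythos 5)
Your proof is correct and follows essentially the same route as the paper: both verify the criterion of Proposition \ref{prop: yamada} on the nilradical $\n = \Span\{\xi, f_2, u_1, v_1, \ldots, u_m, v_m\}$ by replacing each pair $(u_i, v_i)$ with a basis of $\Span\{u_i,v_i\}$ in which $\exp(\tilde A)$ becomes the integer companion matrix of $x^2 - k_i x + 1$, and then normalizing so that the bracket equals $\pm\xi$. The only (immaterial) difference is the normalization: the paper takes $w_i = u_i + q_i\alpha_i v_i$, $\tilde w_i = \alpha_i u_i + q_i v_i$ with $q_i = (1-\alpha_i^2)^{-1}$, whereas you symmetrically rescale $u_i+v_i$ and $\alpha_i u_i + \alpha_i^{-1}v_i$ by $(\alpha_i-\alpha_i^{-1})^{-1/2}$; both choices achieve the same end.
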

\begin{proof}
    The proof is an application of Proposition \ref{prop: yamada}. The Lie brackets on $\g_A$ are given by
\begin{align*}
    [f_1,u_i] = t_{k_i} u_i, \quad [f_1,v_i] = - t_{k_i} v_i, \quad  [f_1,f_2] = [u_i, v_i] = \xi, 
\end{align*}
    \noindent for all $1 \leq i \leq m$. Therefore, we may write $\g_A = \R f_1\ltimes_{\tilde{A}} \n$, where
\begin{align*}
    \n := \Span \{\xi, f_2, u_1, v_1, \ldots, u_m, v_m\}
\end{align*}    
    \noindent is $\n$ is the nilradical of $\g_A$, and $\tilde{A}$ is the matrix given in the basis above by
\begin{align*}
    \tilde{A} =
    \begin{bmatrix} 
        0 & 1 \\ 
        0 & 0 
    \end{bmatrix} 
    \oplus 
    \begin{bmatrix} 
        t_{k_1} & \phantom{-} 0\\ 
        0 & -t_{k_1} 
    \end{bmatrix}
    \oplus \cdots \oplus 
    \begin{bmatrix} 
        t_{k_m} & \phantom{-} 0\\ 
        0 & - t_{k_m} 
    \end{bmatrix}.
\end{align*}
    \noindent Thus, 
\begin{align*}
    \exp(\tilde{A}) = 
    \begin{bmatrix} 
        1 & 1 \\ 
        0 & 1 
    \end{bmatrix} 
    \oplus 
    \begin{bmatrix} 
        e^{t_{k_1}} & 0 \\ 
        0 & e^{-t_{k_1}} 
    \end{bmatrix} 
    \oplus \cdots \oplus
    \begin{bmatrix} 
        e^{t_{k_m}} & 0 \\ 
        0 & e^{-t_{k_m}} 
    \end{bmatrix}.
\end{align*}
    \noindent For all $1 \leq i \leq m$, set $\alpha_i = \e^{t_{k_i}}$ and define 
\begin{align*}
    w_i := p_i u_i + q_i \alpha_i v_i, \quad \tilde{w}_i := p_i \alpha_i u_i + q_i v_i,
\end{align*}
    \noindent where $p_i$, $q_i \in \R$ for all $1 \leq i \leq m$. Recall that $\alpha_i^2 = k_i \alpha_i - 1$ and $\alpha_i + \alpha_i^{-1} = k_i$ for all $1 \leq i \leq m$ because of the choice of $t_{k_i}$. Thus,  
\begin{gather*}
    \exp( \tilde{A} )\xi = \xi, \quad \exp(\tilde{A}) f_2 = \xi + f_2, \\
    \exp( \tilde{A} ) w_i =  \tilde{w}_i, \quad \exp( \tilde{A} ) \tilde{w}_i = - w_i + k_i \tilde{w}_i 
\end{gather*}
    \noindent for all $1 \leq i \leq m$, and so the matrix $\exp(\tilde{A})$ in the basis $\{\xi, f_2, w_1, \tilde{w}_1, \ldots, w_m, \tilde{w}_m\}$ has integer coefficients, namely
\begin{align*}
    \begin{bmatrix} 
        1 & 1 \\ 
        0 & 1 
    \end{bmatrix} 
    \oplus 
    \begin{bmatrix} 
        0 & -1\\ 
        1 & \phantom{-} k_1 
    \end{bmatrix} 
    \oplus \cdots \oplus
    \begin{bmatrix} 
        0 & -1 \\ 
        1 & \phantom{-} k_m
    \end{bmatrix}.
\end{align*}    
    \noindent Moreover, if, for all $1 \leq i \leq m$, we set
\begin{align*}
    p_i := 1, \quad q_i := \frac{1}{1-\alpha_i^2},
\end{align*}
    \noindent then, for all $1 \leq i,j \leq m$ with $i \neq j$, we get
\begin{gather*}
    [w_i, \tilde{w}_i] = p_i q_i (1 - \alpha_i^2) \xi = \xi, \\
    [w_i, w_j] = [w_i, \tilde{w}_j] = [w_j, \tilde{w}_i] = [\tilde{w}_i, \tilde{w}_j] = 0 
\end{gather*}
    \noindent Since both $f_2$ and $\xi$ are central in $\n$, this establishes that $\{\xi, f_2, w_1, \tilde{w}_1, \ldots, w_m, \tilde{w}_m\}$ is a rational basis of $\n$. According to Proposition \ref{prop: yamada}, $G_A$ has lattices.  
\end{proof}
    \indent Thus, combining Proposition \ref{prop: semisimple is 2-lefschetz} and Proposition \ref{prop: semisimple has lattices}, one obtains plenty of contact $2$-Lefschetz completely solvable solvmanifolds $\Gamma \backslash G_A$, the contact form on them being the invariant one induced by $\eta$ on $\g_A$. The fact that each $\Gamma \backslash G_A$ is $2$-Lefschetz follows from Proposition \ref{prop: sometimes it is an isomorphism}(ii), since $\g_A$ is completely solvable.  

\subsection{A non 2-Lefschetz example} \label{section: nonsemisimple case}
    \indent Let $k \geq 3$ and $m \geq 2$ be integers. Consider the matrix 
\begin{equation}\label{eq: A0}
    A_0 = J_m(t_k) \oplus J_m(-t_k) \in \mathfrak{sp}(m,\R),
\end{equation}
    \noindent where $t_k$ is given by \eqref{eq: tm}, and $J_m(t_k)$ and $J_m(-t_k)$ as in equation \eqref{eq: elementary jordan block}. Define $A = (0) \oplus A_0$ as in equation \eqref{eq: matrixA}. As pointed out earlier, we work in the basis $\{u_1,\ldots,u_m,v_1,\ldots, v_m\}$ of $\R^{2m}$. The Lie algebra $\h_A=\R f_1\ltimes_A \R^{2m+1}$ carries a symplectic form
\begin{align} \label{eq: forma simplectica loca}
    \omega = \delta + \omega_0, \quad \omega_0 := \sum_{i=1}^m (-1)^{i+1} u^i\wedge v^{m+1-i}
\end{align}    
    \noindent that, furthermore, is $1$-Lefschetz but not $2$-Lefschetz (see \cite[Theorem 4.25]{AG2}). In \cite{AG2}, the $2$-form $\omega_0$ is denoted $g_m(u,v)$, but we have no use for that notation here. According to \cite[Proposition 4.16]{AG2} (as well as the remarks following that result) and \cite[Proposition 4.17]{AG2}, we know that
\begin{align} \label{eq: we known that}
    \omega^{m-1} \wedge (u^1 \wedge v^1) = \pm \delta \wedge \Gamma_{m,2m}, \quad \delta \wedge \Gamma_{m,2m} = - d( f^2 \wedge \Gamma_{m,2m-1} ). 
\end{align}
    \noindent Moreover, \cite[Theorem 4.14]{AG2} ensures that $u^1 \wedge v^1$ represents a nonzero cohomology class on $\h_A$; in said article, it is called $g_1(u,v)$. These facts were used by the authors in \cite{AG2} to show that \textit{no} symplectic form on $\h_A$ is $2$-Lefschetz.  
    
    \indent Let $(\g_A, \eta)$ denote the contactization of $(\h_A,\omega)$. According to Theorem \ref{thm: main, i guess}, $(\g_A,\eta)$ is $1$-Lefschetz because $(\h_A, \omega)$ is too. It turns out that it is not $2$-Lefschetz, as well.

\begin{proposition} \label{prop: nosemisimple is not 2-lefschetz}
    $(\g_A, \eta)$ is not $2$-Lefschetz. 
\end{proposition}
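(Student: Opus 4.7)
\textbf{The plan} is to exhibit a nonzero class in $H^2(\g_A)$ admitting a $\xi$-horizontal primitive representative and mapped to zero by $\mathrm{Lef}^{n-2}$; this will force $\mathcal{R}_{\mathrm{Lef}_2}$ to contain both $([\beta], 0)$ and $(0, 0)$, thereby failing to be the graph of an isomorphism. My candidate is $\beta := u^1 \wedge v^1 \in \alt^2 \h_A^* \subseteq \alt^2 \g_A^*$, which is closed by \cite[Theorem 4.14]{AG2} together with Lemma \ref{lemma: they are the same differential}, manifestly $\xi$-horizontal, and primitive: wedging the identity $\omega^{m-1} \wedge \beta = \pm\, \delta \wedge \Gamma_{m,2m}$ from \eqref{eq: we known that} with $\omega = \delta + \omega_0$ gives $\omega^m \wedge \beta = \pm\, \delta \wedge \omega_0 \wedge \Gamma_{m,2m}$, and each summand $u^i \wedge v^{m+1-i}$ of $\omega_0$ shares a factor with $\Gamma_{m,2m}$ unless $i = m$ and $i = 1$ hold simultaneously, which is impossible for $m \geq 2$.

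To argue $[\beta] \neq 0$ in $H^2(\g_A)$, I would first establish an analogue of Lemma \ref{lemma: 2-cohomologia} for this example: $Z^2(\g_A) = Z^2(\h_A)$ and $B^2(\g_A) = B^2(\h_A) \oplus \R\,\omega$, yielding $H^2(\g_A) \cong H^2(\h_A)/\R[\omega]$. The nontrivial inclusion uses the $1$-Lefschetz hypothesis on $(\h_A, \omega)$: if $\mu \in Z^1(\h_A)$ satisfies $\omega \wedge \mu \in B^3(\h_A)$, then, multiplying by $\omega^{n-2}$ and using $d \omega = 0$, also $\omega^{n-1} \wedge \mu \in B^{2n-1}(\h_A)$, which forces $\mu = 0$ by injectivity of $L^{n-1}\colon H^1(\h_A) \to H^{2n-1}(\h_A)$. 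Now $L^{n-2}[\omega] \neq 0$ in $H^{2n-2}(\h_A)$ by Poincaré duality, while $L^{n-2}[\beta] = 0$ by \eqref{eq: we known that}, so $[\beta]$ and $[\omega]$ are linearly independent in $H^2(\h_A)$ and $[\beta]$ survives the quotient.

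The key calculation is $\mathrm{Lef}^{n-2}([\beta]) = 0$. Combining \eqref{eq: we known that} with Lemma \ref{lemma: they are the same differential} rewrites $\omega^{m-1} \wedge \beta = \mp\, d_\g(f^2 \wedge \Gamma_{m,2m-1})$; applying $d_\g(\eta \wedge \alpha) = -\omega \wedge \alpha - \eta \wedge d_\g \alpha$ to $\alpha = f^2 \wedge \Gamma_{m,2m-1}$ and passing to cohomology yields
\begin{align*}
    [\eta \wedge \omega^{m-1} \wedge \beta] = \pm\, [\omega \wedge f^2 \wedge \Gamma_{m,2m-1}] \in H^{2n-1}(\g_A).
\end{align*}
The hard part, rather than exhibiting an explicit primitive (which is unenlightening when $m = 2$, where $\omega \wedge f^2 \wedge \Gamma_{m,2m-1}$ does not vanish identically), is showing this class is zero via Poincaré duality on the unimodular $\g_A$: every closed $2$-form $\sigma$ on $\g_A$ lies in $\alt^2 \h_A^*$ by the preceding paragraph, so $\omega \wedge f^2 \wedge \Gamma_{m,2m-1} \wedge \sigma \in \alt^{2m+3} \h_A^* = 0$ for dimensional reasons, making the Poincaré pairing vanish and the class trivial. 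This uniform argument handles all $m \geq 2$ and completes the proof that $(\g_A, \eta)$ is not $2$-Lefschetz.
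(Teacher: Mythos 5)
Your proof is correct and, up to the final step, follows the same route as the paper: the same candidate class $[u^1\wedge v^1]$, and the same verifications of horizontality, closedness and primitivity. (One small imprecision: the $L^{n-2}$ comparison with $[\omega]$ only rules out $[\beta]$ being a \emph{nonzero} multiple of $[\omega]$ in $H^2(\h_A)$; that $[\beta]\neq 0$ in $H^2(\h_A)$ in the first place is exactly the content of \cite[Theorem 4.14]{AG2}, which you should quote for non-exactness and not only for closedness.) The genuine divergence is in showing $\mathrm{Lef}^{n-2}([\beta])=0$. The paper produces an explicit primitive of $\eta\wedge\omega^{m-1}\wedge\beta$ by asserting the \emph{pointwise} identity $\omega\wedge f^2\wedge\Gamma_{m,2m-1}=0$, whereas you only prove that the cohomology class $[\omega\wedge f^2\wedge\Gamma_{m,2m-1}]$ vanishes, via Poincaré duality on the unimodular $\g_A$ combined with the observation that $Z^2(\g_A)=Z^2(\h_A)$ consists of horizontal forms, so that every product with a class of $H^2(\g_A)$ lands in $\alt^{2m+3}\h_A^*=0$. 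Your weaker claim is all that the argument needs, and it is in fact the more robust one: the pointwise identity invoked in the paper holds only for $m\geq 3$, since for $m=2$ the summand $-u^2\wedge v^1=-x^2\wedge x^3$ of $\omega_0$ is disjoint from $f^2\wedge\Gamma_{2,3}=f^2\wedge x^1\wedge x^4$, giving $\omega\wedge f^2\wedge\Gamma_{2,3}=-f^2\wedge\Gamma\neq 0$. Your duality argument shows that this leftover closed horizontal $(2n-1)$-form is nonetheless exact on $\g_A$, so your proof covers $m=2$ uniformly and actually repairs the paper's computation in that case; what you give up is only the explicit primitive, which the paper's version supplies for $m\geq 3$.
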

\begin{proof}
    \indent Regard $\alpha := u^1 \wedge v^1 \in \alt^2 \g_A^*$ as a $2$-form on $\g_A$. It is certainly $\xi$-horizontal and $d_{\g}$-closed, this last fact follows from Lemma \ref{lemma: they are the same differential}. Moreover, it is not $d_{\g_A}$-exact since, by the proof of Lemma \ref{lemma: 2-cohomologia}, the only $\xi$-horizontal closed non-exact form on $\h_A$ is exact on $\g_A$ if and only if it is proportional to $\omega$. An alternative, self-contained argument is also possible: if there were some $a \in \R$ and $\beta \in \alt^1 \h_A^*$ such that $\alpha = d_{\g_A} (a \eta +\beta)$ then one would readily arrive at the relation $d_{\h_A} \beta = u^1 \wedge v^1 + a \omega$, implying that $u^1 \wedge v^1 + a \omega \in \alt^2 \h^*_A$ is exact on $\h_A$; this contradicts the description of $H^2(\h_A)$ in \cite[Theorem 4.14]{AG2}. Furthermore, $\alpha$ is primitive: recalling that $n = m +1$ and taking $k = 2$, we see that 
\begin{align*}
    L^{n-k+1} \alpha &= \pm \omega^m \wedge \alpha = \pm \omega \wedge (\omega^{m-1} \wedge \alpha) = \pm \left( \delta + \sum_{i=1}^m (-1)^{i+1} u^i \wedge v^{m+1-i} \right) \wedge \delta \wedge \Gamma_{m,2m} = 0; 
\end{align*}
    \noindent notice that the first relation in equation \eqref{eq: we known that} is used in the third step. It remains to see that $\epsilon_{\eta} L^{n-2}(\alpha) = \eta \wedge \omega^{m-1} \wedge \alpha$ is $d_{\g_A}$-exact. Using the second relation in equation \eqref{eq: we known that}, we see that
\begin{align*}
    - (d_{\g_A} \eta) \wedge f^2 \wedge \Gamma_{m,2m-1} = \omega \wedge f^2 \wedge \Gamma_{m,2m-1} = \left( \delta + \sum_{i=1}^m (-1)^{i+1} u^i \wedge v^{m+1-i} \right) \wedge f^2 \wedge \Gamma_{m,2m-1} = 0,
\end{align*}
\noindent and therefore we obtain 
\begin{align*}
    \eta \wedge \omega^{m-1} \wedge \alpha = \pm \eta \wedge \delta \wedge \Gamma_{m,2m} = - \eta \wedge d_{\g_A}( f^2 \wedge \Gamma_{m,2m-1} ) = \pm d_{\g_A}( \eta \wedge f^2 \wedge \Gamma_{m,2m-1} ).
\end{align*} 
\noindent Hence, $\mathcal{R}_{\mathrm{Lef}_2}$ cannot be the graph of an isomorphism $H^2(\g_A) \to H^{2n-1}(\g_A)$. \qedhere
\end{proof}
    \indent Denote by $G_A$ the simply connected Lie group corresponding to $\g_A$, and denote by $\eta$ the invariant form on $G_A$ induced by the one on $\g_A$. 
\begin{proposition} \label{prop: nonsemisimple has lattices}
    When $m$ is even, the solvable Lie group $G_A$ admits lattices.
\end{proposition}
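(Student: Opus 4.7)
The plan is to apply Proposition~\ref{prop: yamada} to $G_A$, producing $t_0 = (m-1)! \in \Z$ and a rational basis of the nilradical $\n$ of $\g_A$ in which $\exp(t_0 \tilde A)$ has integer entries. Here $\g_A = \R f_1 \ltimes_{\tilde A} \n$, with $\tilde A = \left(\begin{smallmatrix} 0 & 1 \\ 0 & 0 \end{smallmatrix}\right) \oplus A_0$ acting on $\Span\{\xi, f_2\} \oplus \R^{2m}$, and the nontrivial brackets in $\n$ are $[u_i, v_{m+1-i}]_{\n} = (-1)^{i+1}\xi$, coming from the contactization.

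The key step is the tensor decomposition $\R^{2m} = \R^m \otimes \R^2$ via $u_i = e_i \otimes u_0$ and $v_i = e_i \otimes v_0$, where $\{e_i\}$ and $\{u_0, v_0\}$ are the standard bases. Under this identification, $A_0 = I_m \otimes D + N_m \otimes I_2$, with $D = \diag(t_k, -t_k)$ and $N_m = J_m(0)$; the two summands commute, so $\exp(t_0 A_0) = \exp(t_0 N_m) \otimes \exp(t_0 D)$. A direct computation shows that for $m$ even, the symplectic form factorizes as $\omega_0 = h \otimes s$, where $h \in \alt^2(\R^m)^*$ is antisymmetric with $h(e_i, e_{m+1-i}) = (-1)^{i+1}$ and $s \in S^2(\R^2)^*$ is symmetric with $s(u_0, v_0) = 1$ and $s(u_0, u_0) = s(v_0, v_0) = 0$. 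The parity of $m$ enters \emph{precisely here}: for odd $m$, the symmetry roles of $h$ and $s$ swap, which conflicts with the basis choices below.

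Taking $t_0 = (m-1)!$, the matrix $\exp(t_0 N_m)$ has integer entries in the standard basis of $\R^m$ (its $j$-th subdiagonal is $t_0^j/j!$, which is an integer for $0 \leq j \leq m - 1$ because $j! \mid (m-1)!^j$), and $h$ already has entries in $\{0, \pm 1\}$ there. For the second factor, set $\alpha := \e^{t_k}$ and choose the basis $\{w_1, w_2\} = \{u_0 + v_0, \, \alpha^{t_0} u_0 + \alpha^{-t_0} v_0\}$ of $\R^2$. A short calculation shows that $\exp(t_0 D)$ becomes the integer companion matrix $\left(\begin{smallmatrix} 0 & -1 \\ 1 & c \end{smallmatrix}\right)$ with $c := \alpha^{t_0} + \alpha^{-t_0}$, and $c \in \Z$ by Lemma~\ref{lemma: integers}; simultaneously $s$ acquires the integer matrix $\left(\begin{smallmatrix} 2 & c \\ c & 2 \end{smallmatrix}\right)$.

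Consequently, in the tensor basis $\{e_i \otimes w_a\}_{1 \leq i \leq m, \, a \in \{1,2\}}$ of $\R^{2m}$, both $\exp(t_0 A_0)$ and $\omega_0$ have integer entries. Appending $\xi$ and $f_2$ yields a basis $\mathcal{B}$ of $\n$ whose only nontrivial brackets are $[e_i \otimes w_a, e_j \otimes w_b]_{\n} = h(e_i, e_j) s(w_a, w_b) \xi$, integer multiples of $\xi$, so $\mathcal{B}$ is rational; and since $t_0 \in \Z$, $\exp(t_0 \tilde A)$ restricted to $\Span\{\xi, f_2\}$ equals $\left(\begin{smallmatrix} 1 & t_0 \\ 0 & 1 \end{smallmatrix}\right)$, also integer. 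Proposition~\ref{prop: yamada} then yields a splittable lattice. The principal obstacle is the coupling between the rational-basis condition (governed by $\omega_0$) and the integrality of $\exp(t_0 \tilde A)$ (governed by $A_0$), compounded by the fact that the eigenvalues $\pm t_k$ of $A_0$ are transcendental, so $A_0$ itself cannot be conjugated into integer form. The factorization $\omega_0 = h \otimes s$ decouples the problem into an $\R^m$-piece (where the obstruction is the factorial denominators in $\exp(t_0 N_m)$, handled by $t_0 = (m-1)!$) and an $\R^2$-piece (where the obstruction is the algebraic nature of $\alpha^{t_0}$, handled by the companion-matrix basis change); the resulting choices are compatible precisely when $m$ is even.
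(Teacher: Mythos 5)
Your proposal is correct, but it takes a genuinely different route from the paper's. The paper also invokes Proposition~\ref{prop: yamada}, but with $t_0=1$: it observes that $\exp(A_0)=\alpha T_m\oplus\alpha^{-1}T_m$ has characteristic polynomial $(x^2-kx+1)^m\in\Z[x]$ and admits the cyclic vector $u_1+v_1$, passes to the cyclic basis $w_j:=\exp(A_0)^{j-1}(u_1+v_1)$ in which $\exp(\tilde A)$ becomes an integer companion-type matrix, and then checks rationality of the brackets $[w_i,w_j]$ by an explicit computation with the rational coefficients $c^r_j$ of $w_j$ in the $u,v$-basis; the evenness of $m$ enters there, where the sign $(-1)^{m+1}$ forces the combinations $\alpha^{\ell}+\alpha^{-\ell}$ (integers by Lemma~\ref{lemma: integers}) rather than $\alpha^{\ell}-\alpha^{-\ell}$ to appear. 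You instead decouple the problem via the tensor factorization $\R^{2m}=\R^m\otimes\R^2$, $A_0=N_m\otimes I_2+I_m\otimes D$, $\omega_0=h\otimes s$, treat the nilpotent factor by enlarging $t_0$ to $(m-1)!$ so the factorial denominators of $\exp(t_0N_m)$ clear, and treat the semisimple factor with a two-dimensional companion basis; for you the parity of $m$ enters in the symmetry types of $h$ and $s$, and indeed for odd $m$ the factorization would force $s$ antisymmetric, making $s(w_1,w_2)=\alpha^{-t_0}-\alpha^{t_0}$ irrational. Both arguments are complete. Your version makes the role of the parity of $m$ more structural and cleanly separates the two obstructions (factorial denominators versus transcendence of $\alpha^{t_0}$), at the price of a larger time step $t_0=(m-1)!$; the paper's single cyclic basis keeps $t_0=1$ but pays with a longer hands-on bracket computation.
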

\begin{proof}
    The proof is an application of Proposition \ref{prop: yamada}. The Lie brackets on $\g_A$ are given by
\begin{align*}
    [f_1,u_i] = J_m(t_k) u_i, \quad [f_1,v_i] = J_m(-t_k) v_i, \quad  [f_1,f_2]=\xi, \quad [u_i,v_{m+1-i}] = (-1)^{i+1}\xi.
\end{align*}
    \noindent for all $1 \leq i \leq m$. Therefore, we may write $\g_A = \R f_1\ltimes_{\tilde{A}} \n$, where
\begin{align*}
    \n := \Span \{\xi, f_2, u_1, v_1, \ldots, u_m, v_m\}
\end{align*}    
    \noindent is the nilradical of $\g_A$, and $\tilde{A}$ is the matrix given in the basis above by
\begin{align*}
    \tilde{A} = 
    \begin{bmatrix} 
        0 & 1 \\ 
        0 & 0
    \end{bmatrix}
    \oplus J_m(t_k) \oplus J_m(-t_k)
    = 
    \begin{bmatrix} 
        0 & 1 \\ 
        0 & 0
    \end{bmatrix}
    \oplus A_0.
\end{align*}
    \noindent Thus, 
\begin{align*}
    \exp(\tilde{A}) = 
    \begin{bmatrix}
        1 & 1 \\ 
        0 & 1 
    \end{bmatrix} 
    \oplus \exp[J_m(t_k)] \oplus \exp[J_m(-t_k)]
    = 
    \begin{bmatrix}
        1 & 1 \\ 
        0 & 1 
    \end{bmatrix} 
    \oplus \exp(A_0).
\end{align*}
    \noindent It is important to mention that $\exp[J_m(\lambda)] = \e^\lambda T_m$ for any $\lambda \in \R$, where $T_m$ is the lower triangular matrix whose value in the $(i,j)$-spot, with $i\geq j$, is $\frac{1}{(i-j)!}$. Note that all the coefficients of $T_m$ are rational numbers. Set $\alpha := \e^{t_k}$ and define
\begin{align} \label{eq: the w's}
    w_1 := u_1 + v_1, \quad w_j := \exp(A_0) w_{j-1} \text{ for all $2 \leq j \leq 2m$},
\end{align}
    \noindent It is a well known result of linear algebra that $w_1$ is a cyclic vector for $\exp(A_0)$. Moreover, since $\exp(A_0) = \alpha T_m\oplus \alpha^{-1} T_m$, its characteristic polynomial is $q(x) := (x^2 - kx + 1)^m \in \Z[x]$. This means that the matrix $\exp(\tilde{A})$ in the basis $\{w_1,\ldots,w_{2m}\}$ has integers coefficients, namely
\begin{align*}
    \begin{bmatrix} 
        1 & 1 \\ 
        0 & 1 
    \end{bmatrix} 
    \oplus C(q), 
\end{align*}    
    \noindent where $C(q)$ is the companion matrix of $q$, which is an integer matrix. 
    
    \indent Let us now verify that the basis $\{\xi, f_2, w_1, \ldots, w_{2m}\}$ is rational. Since the matrix $\exp(A_0) = \alpha T_m\oplus \alpha^{-1} T_m$ and $T_m$ is a rational matrix, there exist $c^r_j\in \Q$ for $1 \leq r \leq m$ and $2 \leq j \leq m$ such that 
\begin{align*}
    w_1=u_1+v_1, \quad \text{and} \quad w_j=\alpha^{j-1}\sum_{r=1}^m c^r_j u_r+\alpha^{-j+1}\sum_{r=1}^m c^r_j v_r, \quad 2\leq j\leq m,
\end{align*}
    \noindent Notice that the same coefficients appear in both sums. Now, for all $2 \leq j \leq 2m$, we compute
\begin{align*}
    [w_1,w_j] & = \left[u_1+v_1,\alpha^{j-1}\sum_{r=1}^m c^r_j u_r+\alpha^{-j+1}\sum_{r=1}^m c^r_j v_r \right] \\
    & =\alpha^{-j+1}c_j^m[u_1,v_m]+\alpha^{j-1}c^m_j[v_1,u_m] \\
    & =c^m_j[\alpha^{-j+1}-(-1)^{m+1}\alpha^{j-1}]\xi \\
    & = c^m_j [\alpha^{-j+1}+\alpha^{j-1}]\xi,
\end{align*}  
    \noindent since $m$ is assumed to be even. Using Lemma \ref{lemma: integers} and the fact that $c^m_j$ is a rational number, we obtain that the structure constants appearing in $[w_1,w_j]$ are rational. Similarly, for all $2 \leq i<j \leq m$, we compute
\begin{align*}
    [w_i,w_j] & = \left[ \alpha^{i-1}\sum_{s=1}^m c^s_i u_s+\alpha^{-i+1}\sum_{s=1}^m c^s_i v_s, \alpha^{j-1}\sum_{r=1}^m c^r_j u_r+\alpha^{-j+1}\sum_{r=1}^m c^r_j v_r \right] \\
    & = \alpha^{i-j}\sum_{s=1}^m c_i^sc_j^{m+1-s} [u_s,v_{m+1-s}]
    +\alpha^{j-i}\sum_{s=1}^m c_i^s c_j^{m+1-s} [v_s,u_{m+1-s}] \\ 
    & =\alpha^{i-j}\sum_{s=1}^m (-1)^{s+1} c_i^sc_j^{m+1-s} \xi 
    -\alpha^{j-i}\sum_{s=1}^m (-1)^{m-s} c_i^s c_j^{m+1-s} \xi \\ 
    & = \sum_{s=1}^m (-1)^{s+1} c_i^s c_j^{m+1-s} [\alpha^{i-j}+\alpha^{j-i}] \xi,
\end{align*}
    \noindent and again we use that $m$ is even in the last equality. Once more, using Lemma \ref{lemma: integers} and the fact that $c^s_j$ is a rational number, we obtain that the structure constants appearing in $[w_i,w_j]$ are rational for all $2 \leq i<j \leq m$. Since both $f_2$ and $\xi$ are central in $\n$, this establishes that $\{\xi, f_2, w_1, \ldots, w_{2m}\}$ is a rational basis of $\n$. According to Proposition \ref{prop: yamada}, $G_A$ has lattices.  
\end{proof} 
\begin{remark}
    As of now, it is unclear whether $G_A$ has lattices for $m$ odd.
\end{remark}
    \indent  Thus, combining Proposition \ref{prop: nosemisimple is not 2-lefschetz} and Proposition \ref{prop: nonsemisimple has lattices}, one obtains plenty of contact $1$-Lefschetz completely solvable solvmanifolds $\Gamma \backslash G_A$ that are not $2$-Lefschetz, the contact form on them being the invariant one induced by $\eta$ on $\g_A$. The fact that each $\Gamma \backslash G_A$ is not $2$-Lefschetz follows from  Proposition \ref{prop: sometimes it is an isomorphism}(ii), since $\g_A$ is completely solvable.    
\begin{remark}
    Analogues to Proposition \ref{prop: nosemisimple is not 2-lefschetz} and Proposition \ref{prop: nonsemisimple has lattices} are valid for the class of almost abelian Lie algebras originating from a matrix
\begin{align*}
    A_0 = A_1 \oplus \cdots \oplus A_r, \quad A_i := J_{m_i}(t_{k_i}) \oplus J_{m_i}(-t_{k_i}) \text{ for all $1 \leq i \leq r$}
\end{align*}
    \noindent where $r \in \N$ is arbitrary, $m_1, \ldots, m_r \in \N$ are all even positive integers, and $t_{k_1}, \ldots, t_{k_r}$ are as in equation \eqref{eq: tm} and such that $k_1, \ldots, k_r$ are all pairwise distinct. The symplectic form $\omega$ on $\h_A$ must be taken to be
\begin{align*}
    \omega := \delta + \omega_1 + \cdots + \omega_r, 
\end{align*}   
    \noindent where each $\omega_j$ is given by a similar formula as that on $\omega_0$ in equation \eqref{eq: forma simplectica loca}. The analogue of Proposition \ref{prop: nosemisimple is not 2-lefschetz} follows as an application to \cite[Lemma 3.5]{AG2}. The analogue of Proposition \ref{prop: nonsemisimple has lattices} follows essentially from the same proof, noting that the fact that $t_{k_i} \neq t_{k_j}$ for all $1 \leq i, j \leq r$ prevents interaction between different blocks, and so the basis given by vectors as in equation \eqref{eq: the w's} works just fine. 
\end{remark} 

\subsection{Another non 2-Lefschetz example} \label{section: BG case}

\indent Let $\h_{\mathrm{BG}}$ be the Lie algebra spanned by
\begin{align*}
    \{w_1, w_2, x_1, y_1, z_1, x_2, y_2, z_2\}
\end{align*}
\noindent whose nontrivial brackets are
\begin{gather*}
    [x_1, y_1] = z_1, \quad [x_2, y_2] = z_2, \\
    [w_1, x_1] = \phantom{+} x_1, \quad [w_1, y_1] = - 2 y_1, \quad [w_1, z_1] = - z_1, \\
    [w_1, x_2] = -  x_2, \quad [w_1, y_2] = \phantom{+} 2 y_2, \quad [w_1, z_2] = \phantom{+} z_2.
\end{gather*}
    \noindent Equivalently, $\h_{BG}$ is the almost-nilpotent Lie algebra $\R w_1 \ltimes_A (\R w_2 \oplus \h_3 \oplus \h_3)$, where each $\h_3$ factor is a $3$-dimensional Heisenberg Lie algebra spanned by $\{x_i, y_i, z_i\}$ for $i = 1$ and $i = 2$, and the action of $A := \ad w_1$ on $\n := \R w_2 \oplus \h_3 \oplus \h_3$ is given by
\begin{align*}
    A = \diag(0, 1, -2, -1, -1, 2, 1). 
\end{align*}
    \noindent Notice that $\n$ is the nilradical of $\h_{\mathrm{BG}}$ and that $\z(\h_{\mathrm{BG}}) = \R w_2$. This Lie algebra appears in \cite[Example 3]{BG2}. A more general family of Lie algebras is studied in \cite{SY}, although they are either isomorphic to $\h_{\mathrm{BG}}$ or have no symplectic forms. Let $\{w^1, w^2, x^1, y^1, z^1, x^2, y^2, z^2\}$ denote the dual basis of $\h_{\mathrm{BG}}^*$. The argument in \cite[Example 3]{BG2}, or direct computation, shows that
\begin{align} \label{eq: description of BG 2-cohomology}
    H^1(\h_{\mathrm{BG}}) = \Span \{w^1, w^2\}, \quad  
    H^2(\h_{\mathrm{BG}}) = \Span \{ w^1 w^2, x^1 z^1, x^2 z^2, x^1 x^2, y^1 y^2\}. 
\end{align}
    \noindent Here $x^1 z^1$ denotes the $2$-form $x^1 \wedge z^1$, and similarly for the rest; the omission of the wedge product is to keep expressions short. Poincaré duality can be used to compute $H^6(\h_{\mathrm{BG}})$ and $H^7(\h_{\mathrm{BG}})$, but we refrain from doing so because we only need the following two relations:
\begin{gather}
    d_{\h_{\mathrm{BG}}}(w^1 w^2 x^1 y^1 z^1) = w^1 w^2 x^1 x^2 y^1 y^2 \in \alt^6 \h_{\mathrm{BG}}^*, \label{eq: thank you, BG} \\
    d_{\h_{\mathrm{BG}}} (w^2 x^1 x^2 y^1 z^1 z^2) = w^1 w^2 x^1 x^2 y^1 z^1 z^2 \in \alt^7 \h_{\mathrm{BG}}^*. \label{eq: thank you, maple}
\end{gather}
    \indent The description of $H^2(\h_{\mathrm{BG}})$ implies that all symplectic forms on $\h_{\mathrm{BG}}$ are cohomologous to
\begin{align} \label{eq: all symplectic forms on BG}
    \omega := a w^1 w^2+ b x^1 z^1 + c x^2 z^2 + e x^1 x^2 + f y^1 y^2
\end{align}
    \noindent for some $a$, $b$, $c$, $e$, $f \in \R$, all nonzero except possibly for $e$. It is straightforward to check, either by direct computation or by appealing to Theorem \ref{thm: Benson-Gordon 2}, that $(\h_{\mathrm{BG}}, \omega)$ is $1$-Lefschetz for all $\omega$. As pointed out in \cite[Example 3]{BG2}, $(\h_{\mathrm{BG}}, \omega)$ is not $2$-Lefschetz for any $\omega$, the reason being that $\rho := x^1 x^2$ belongs in the kernel of the $2$-Lefschetz operator $L^2: H^2(\h_{\mathrm{BG}}) \to H^6(\h_{\mathrm{BG}})$. Indeed, 
\begin{align*}  
    L^2\rho = \omega^2 (x^1 x^2) = 2 a f w^1 w^2 x^1 x^2 y^1 y^2 = d_{\h_{\mathrm{BG}}}(2af w^1 w^2 x^1 y^1 z^1). 
\end{align*}
    \noindent Notice that we have used equation \eqref{eq: thank you, BG} in the last step. 

    \indent Let $(\g_{\mathrm{BG}}, \eta)$ denote the contactization of $(\h_{\mathrm{BG}},\omega)$. The notation hides the fact that $\g_{\mathrm{BG}}$ depends on the choice of parameters $a$, $b$, $c$, $e$, $f \in \R$ in the expression of $\omega$ given in equation \eqref{eq: all symplectic forms on BG}. According to Theorem \ref{thm: main, i guess}, $(\g_{\mathrm{BG}},\eta)$ is $1$-Lefschetz because $(\h_{\mathrm{BG}}, \omega)$ is, too. As in the example in Section \ref{section: nonsemisimple case}, it turns out that it is also not $2$-Lefschetz, and essentially for the same reasons. 

\begin{proposition} \label{prop: BG is not 2-lefschetz}
    $(\g_{\mathrm{BG}}, \eta)$ is not $2$-Lefschetz. 
\end{proposition}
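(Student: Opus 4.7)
The plan is to adapt the strategy of Proposition \ref{prop: nosemisimple is not 2-lefschetz}, producing a closed $2$-form on $\g_{\mathrm{BG}}$ that is $\xi$-horizontal, primitive, and represents a nonzero class in $H^2(\g_{\mathrm{BG}})$, but whose contact Lefschetz image is $d_\g$-exact. The natural candidate is $\rho := x^1 \wedge x^2$, which is the very $2$-form identified by Benson and Gordon in \cite[Example 3]{BG2} as the obstruction to $(\h_{\mathrm{BG}}, \omega)$ being symplectically $2$-Lefschetz.

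First I would verify that $\rho$ is $\xi$-horizontal (immediate from $\iota_\xi \rho = 0$) and $d_\g$-closed (which follows from $d_{\h_{\mathrm{BG}}} \rho = 0$, a short direct computation using the structure equations, combined with Lemma \ref{lemma: they are the same differential}). To confirm $[\rho] \neq 0$ in $H^2(\g_{\mathrm{BG}})$, I would reuse the argument from the proof of Lemma \ref{lemma: 2-cohomologia}, which applies verbatim to yield an isomorphism $H^2(\g_{\mathrm{BG}}) \cong H^2(\h_{\mathrm{BG}})/\R[\omega]$. Combined with the explicit basis of $H^2(\h_{\mathrm{BG}})$ in \eqref{eq: description of BG 2-cohomology} and the nonvanishing of $a$, $b$, $c$, $f$ in the expansion \eqref{eq: all symplectic forms on BG}, this shows that $[\rho]$ is not a scalar multiple of $[\omega]$, and thus survives as a nontrivial class in the quotient.

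Primitivity of $\rho$ requires $\omega^3 \wedge \rho = 0$, since here $n=4$ and $k=2$. Expanding $\omega^3$ via the multinomial formula (noting that each summand of $\omega$ squares to zero as a $2$-form) reduces the problem to checking a short list of triple products $\omega_{i_1} \wedge \omega_{i_2} \wedge \omega_{i_3} \wedge x^1 \wedge x^2$ with $i_1 < i_2 < i_3$; each such term contains at least one of $x^1$ or $x^2$ twice, and hence vanishes identically.

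The crux is to show that the contact Lefschetz image $\eta \wedge \omega^2 \wedge \rho$ is $d_\g$-exact. Equation \eqref{eq: thank you, BG} furnishes a $5$-form $\sigma \in \alt^5 \h_{\mathrm{BG}}^*$ with $d_{\h_{\mathrm{BG}}} \sigma$ equal to a nonzero scalar multiple of $\omega^2 \wedge \rho$; Lemma \ref{lemma: they are the same differential} lifts this identity to $\g_{\mathrm{BG}}$. The Leibniz rule together with $d_\g \eta = -\omega$ then yields a decomposition of the form
\begin{align*}
\eta \wedge \omega^2 \wedge \rho = c_1\, d_\g(\eta \wedge \sigma) + c_2\, \omega \wedge \sigma
\end{align*}
for explicit nonzero scalars $c_1, c_2$. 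The hard part will be showing that the remainder $\omega \wedge \sigma$ is also $d_\g$-exact; this is precisely what equation \eqref{eq: thank you, maple} is designed to provide, as after a direct wedge-product calculation pinning down the unique nonzero contribution, $\omega \wedge \sigma$ becomes a scalar multiple of $d_{\h_{\mathrm{BG}}}$ applied to an explicit $6$-form, and Lemma \ref{lemma: they are the same differential} transfers this exactness to $\g_{\mathrm{BG}}$. Putting everything together, the pair $([\rho], 0)$ lies in $\mathcal{R}_{\mathrm{Lef}_2}$ while $[\rho] \neq 0$, which prevents $\mathcal{R}_{\mathrm{Lef}_2}$ from being the graph of any isomorphism $H^2(\g_{\mathrm{BG}}) \to H^{2n-1}(\g_{\mathrm{BG}})$.
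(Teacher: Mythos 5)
Your proposal is correct and follows essentially the same route as the paper's proof: the same obstruction class $\rho = x^1 \wedge x^2$, the same quotient argument $H^2(\g_{\mathrm{BG}}) \cong H^2(\h_{\mathrm{BG}})/\R[\omega]$ for non-exactness, the same divisibility argument for primitivity, and the same two-step exactness computation combining equations \eqref{eq: thank you, BG} and \eqref{eq: thank you, maple} via the Leibniz rule with $d_{\g}\eta = -\omega$. No gaps.
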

\begin{proof}
    \indent Regard $\rho= x^1 x^2 \in \alt^2 \g_{\mathrm{BG}}^*$ as a $2$-form on $\g_{\mathrm{BG}}$. It is certainly $\xi$-horizontal and $d_{\g_{\mathrm{BG}}}$-closed, this last bit in part because of Lemma \ref{lemma: they are the same differential}. Moreover, it is not $d_{\g_{\mathrm{BG}}}$-exact since, by the proof of Lemma \ref{lemma: 2-cohomologia}, the only $\xi$-horizontal closed non-exact form on $\h_{\mathrm{BG}}$ is exact on $\g_{\mathrm{BG}}$ if and only if it is proportional to $\omega$. An alternative, self-contained argument is also possible: if there were some $k \in \R$ and $\beta \in \alt^1 \h_{\mathrm{BG}}^*$ such that $\rho = d_{\g_{\mathrm{BG}}} (k \eta +\beta)$ then one would readily arrive at the relation $d_{\h_{\mathrm{BG}}} \beta = x^1 x^2 + k \omega$, implying that $x^1 x^2 + k \omega \in \alt^2 \h^*_{\mathrm{BG}}$ is exact on $\h_{\mathrm{BG}}$, which contradicts the description of $H^2(\h_{\mathrm{BG}})$ in equation \eqref{eq: description of BG 2-cohomology}. Furthermore, $\rho$ is primitive: since
\begin{align*}
    \omega^3 = 6 a b c w^1 w^2 x^1 z^1 x^2 z^2 + 6 a b f w^1 w^2 x^1 z^1 y^1 y^2 + 6 a c f w^1 w^2 x^2 z^2 y^1 y^2,
\end{align*}
    \noindent and all terms are divisible either by $x^1$ or by $x^2$, it follows that 
\begin{align*}
    L^3 \rho &= \omega^3 (x^1 x^2) = 0.
\end{align*}
    \noindent It remains to see that $\epsilon_{\eta} L^2 \rho = \eta \omega^2 \rho$ is $d_{\g_{\mathrm{BG}}}$-exact, but this follows from Lemma \ref{lemma: they are the same differential} as well as equations \eqref{eq: thank you, BG} and \eqref{eq: thank you, maple}, since  
\begin{gather*}
    - (d_{\g} \eta) w^1 w^2 x^1 y^1 z^1 = \omega w^1 w^2 x^1 y^1 z^1 = c w^1 w^2 x^1 x^2 y^1 z^1 z^2 = c d_{\g_{\mathrm{BG}}} (w^2 x^1 x^2 y^1 z^1 z^2 ), \\
    \omega^2 (x^1 x^2) = 2 a f w^1 w^2 x^1 x^2 y^1 y^2 = d_{\g_{\mathrm{BG}}}(2af w^1 w^2 x^1 y^1 z^1),
\end{gather*}
\noindent and thus
\begin{align*}
    \eta \omega^2 \rho = \eta d_{\g_{\mathrm{BG}}}(2 af w^1 w^2 x^1 y^1 z^1) = 2 af \, d_{\g_{\mathrm{BG}}} (\eta \, w^1 w^2 x^1 y^1 z^1) - 2acf d_{\g_{\mathrm{BG}}} (w^2 x^1 x^2 y^1 z^1 z^2 ).
\end{align*} 
\noindent Hence, $\mathcal{R}_{\mathrm{Lef}_2}$ cannot be the graph of an isomorphism $H^2(\g_{\mathrm{BG}}) \to H^7(\g_{\mathrm{BG}})$. \qedhere
\end{proof}
    \indent Denote by $G_{\mathrm{BG}}$ and $H_{\mathrm{BG}}$ the simply connected Lie groups corresponding to $\g_{\mathrm{BG}}$ and $\h_{\mathrm{BG}}$ respectively. We now show that $G_{\mathrm{BG}}$ has lattices for some choice of the parameters $a$, $b$, $c$, $e$, $f \in \R$ in equation \eqref{eq: all symplectic forms on BG}. As a byproduct, we reobtain the result in \cite{SY} that $H_{\mathrm{BG}}$ has lattices, but with a different method; see Remark \ref{obs: byproduct} below. 
\begin{proposition} \label{prop: BG has lattices}
    There is a choice of parameters $a$, $b$, $c$, $e$, $f \in \R$ in equation \eqref{eq: all symplectic forms on BG} such that $G_{\mathrm{BG}}$ admits lattices.
\end{proposition}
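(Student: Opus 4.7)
The plan is to invoke Proposition \ref{prop: yamada}, following the strategy of Proposition \ref{prop: nonsemisimple has lattices}. First I would recognize $\g_{\mathrm{BG}}$ as an almost nilpotent Lie algebra $\R w_1 \ltimes_{\tilde A} \tilde\n$, where the nilradical is $\tilde\n = \R\xi \oplus \R w_2 \oplus \Span\{x_1,y_1,z_1,x_2,y_2,z_2\}$ and the operator $\tilde A = \ad_{w_1}|_{\tilde\n}$ acts diagonally with eigenvalues $1, -2, -1, -1, 2, 1$ on the last six basis elements, annihilates $\xi$, and sends $w_2$ to $a\xi$; this last is the only contribution of the contactization to the derivation, since $\omega(w_1, \cdot)$ detects only $w_2$.

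The key specialization is to fix an integer $k\geq 3$, set $t_0 := t_k$ and $\alpha := e^{t_k}$, and to pair up the vectors sharing inverse eigenvalues under $\exp(t_k\tilde A)$ in the same way as in Proposition \ref{prop: nonsemisimple has lattices}. Concretely I would take
\[
\tilde x_1 := x_1 + x_2, \quad \tilde x_2 := \alpha x_1 + \alpha^{-1} x_2,
\]
together with analogous definitions for $\tilde z_i$ (using eigenvalues $\alpha^{\mp 1}$) and for $\tilde y_i$ (using eigenvalues $\alpha^{\mp 2}$, so that the companion block for $\{\tilde y_1, \tilde y_2\}$ corresponds to the polynomial $p_{k^2-2}$); the vector $w_2$ should be rescaled as $\hat w_2 := w_2/(t_k a)$ so that $\exp(t_k\tilde A)$ sends $\hat w_2$ to $\hat w_2 + \xi$. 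The matrix of $\exp(t_k\tilde A)$ in the basis $\{\xi, \hat w_2, \tilde x_1, \tilde x_2, \tilde y_1, \tilde y_2, \tilde z_1, \tilde z_2\}$ is then the direct sum of a unipotent $2\times 2$ block and three companion blocks $C(p_k), C(p_{k^2-2}), C(p_k)$, all with integer entries.

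The main obstacle and the source of the parameter restrictions is verifying that this basis is rational. Most brackets in $\tilde\n$ are immediately rational: those involving $\hat w_2$ vanish since $w_2$ is central in $\tilde\n$, while $[\tilde x_i, \tilde y_j]$ and $[\tilde x_i, \tilde z_j]$ unfold into integer combinations of the $\tilde z_i$'s and of $\xi$ by repeated use of Lemma \ref{lemma: integers} and of the relation $\alpha^2 = k\alpha - 1$, provided that one imposes $b = c$. The two genuinely delicate brackets are
\[
[\tilde x_1, \tilde x_2] = (\alpha^{-1} - \alpha)\, e\, \xi, \qquad [\tilde y_1, \tilde y_2] = (\alpha^2 - \alpha^{-2})\, f\, \xi = k\sqrt{k^2-4}\, f\, \xi,
\]
whose prefactors are irrational. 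I would force the first to vanish by setting $e := 0$ and rationalize the second by choosing $f := 1/(k\sqrt{k^2-4})$, which yields $[\tilde y_1, \tilde y_2] = \xi$. With the remaining parameters chosen as $a = b = c = 1$, the form $\omega$ remains symplectic (as $\omega^4$ is a nonzero multiple of the top form), so Proposition \ref{prop: yamada} applies with $t_0 = t_k$ and the rational basis above, producing the desired lattice in $G_{\mathrm{BG}}$.
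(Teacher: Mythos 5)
Your proposal is correct: the hypotheses of Proposition \ref{prop: yamada} are satisfied by your basis, and the parameter choice $e=0$, $b=c=1$, $f=1/(k\sqrt{k^2-4})$ does keep $\omega$ symplectic while making all structure constants integral. The overall strategy coincides with the paper's (recognize $\g_{\mathrm{BG}}=\R w_1\ltimes_{\tilde A}\n$, rescale so that $\ad_{w_1}$ involves $t_k$, pass to a basis in which $\exp(t_k\tilde A)$ is an integer matrix, then tune the free parameters of $\omega$ to make the basis rational), but your execution differs in two substantive ways. First, you pair each eigenvector couple symmetrically, taking $\tilde x_1=x_1+x_2$, $\tilde x_2=\alpha x_1+\alpha^{-1}x_2$ and treating the $y$-pair with its actual eigenvalues $\alpha^{\mp2}$, so its companion block is $C(p_{k^2-2})$; the paper instead introduces six auxiliary scalars $\lambda_i,\delta_i$ and forces all three pairs into (powers of) the single block $C(p_k)$. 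Second, and as a consequence, the brackets $[\tilde x_i,\tilde y_j]$ land in $\Z\tilde z_1+\Z\tilde z_2$ automatically (one checks $[\tilde x_1,\tilde y_1]=\tilde z_1$, $[\tilde x_1,\tilde y_2]=-\tilde z_1+k\tilde z_2$, $[\tilde x_2,\tilde y_1]=k\tilde z_1-\tilde z_2$, $[\tilde x_2,\tilde y_2]=\tilde z_2$), so you never need the rationality conditions \eqref{eq: first four} nor the rank computation of the $4\times4$ system over $\Q(\alpha)$ that occupies most of the paper's proof; the only genuine constraints left are $e=0$, $b=c$, and the rationalization of $(\alpha^2-\alpha^{-2})f$, exactly as you identify. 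The one place where you assert rather than compute is the integrality of the $[\tilde x_i,\tilde y_j]$ brackets, but the assertion is true and the verification is a two-line computation with $\alpha+\alpha^{-1}=k$, so this is not a gap. Your route buys a considerably shorter and more transparent proof at the cost of being tied to this particular symmetric normalization, whereas the paper's parametrized ansatz exhibits the full family of admissible $(\lambda_i,\delta_i,b,c,f)$.
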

\begin{proof}
    The proof is an application of Proposition \ref{prop: yamada}. It is best to work with a Lie algebra isomorphic to $\h_{\mathrm{BG}}$, obtained by mapping $w_1 \mapsto t_k w_1$ and leaving all other generators unchanged. Here, $t_k$ is defined as in equation \eqref{eq: tm}, where $k \in \Z$ is an integer satisfying $k \geq 3$. On this isomorphic version of $\h_{\mathrm{BG}}$, choose a symplectic form $\omega$ as given in equation \eqref{eq: all symplectic forms on BG}, and obtain the contactization $(\g_{\mathrm{BG}}, \eta)$ of $(\h_{\mathrm{BG}}, \omega)$, which is isomorphic to the original one. The Lie brackets on $\g_{\mathrm{BG}}$ are given by
\begin{gather*}
    [x_1, y_1] = z_1, \quad [x_2, y_2] = z_2, \\
    [w_1, x_1] = \phantom{+} t_k x_1, \quad [w_1, y_1] = - 2 t_k y_1, \quad [w_1, z_1] = - t_k z_1, \\
    [w_1, x_2] = - t_k x_2, \quad [w_1, y_2] = \phantom{+} 2 t_k y_2, \quad [w_1, z_2] = \phantom{+}t_k z_2, \\
    [x_1, z_1] = b \xi, \quad [x_2, z_2] = c \xi, \\
    [w_1, w_2] = a \xi, \quad [x_1, x_2] = e \xi, \quad [y_1, y_2] = f \xi.
\end{gather*}
    \noindent Recall that all the coefficients $a$, $b$, $c$, $e$, $f \in \R$ are nonzero except possibly for $e$. We make use of this bit of freedom and set $e = 0$. We may write $\g_A = \R w_1\ltimes_{\tilde{A}} \n$, where
\begin{align*}
    \n := \Span \{\xi, w_2, x_1, x_2, y_1, y_2, z_1, z_2\}
\end{align*}    
    \noindent is the nilradical of $\g_A$, and $\tilde{A}$ is the matrix given in the basis above by
\begin{align*}
    \tilde{A} = 
    \begin{bmatrix} 
        0 & a \\ 
        0 & 0
    \end{bmatrix}
    \oplus 
    \begin{bmatrix} 
        t_k & \phantom{-} 0\\ 
        0 & - t_k
    \end{bmatrix}
    \oplus 
    \begin{bmatrix} 
        - 2 t_k & 0\\ 
        \phantom{-}0 & 2 t_k
    \end{bmatrix}
    \oplus 
    \begin{bmatrix} 
        - t_k & 0\\ 
        \phantom{-}0 & t_k
    \end{bmatrix}.
\end{align*}
    \noindent Thus, 
\begin{align*}
    \exp(\tilde{A}) &= 
    \begin{bmatrix}
        1 & a \\ 
        0 & 1
    \end{bmatrix} 
    \oplus
    \begin{bmatrix} 
        e^{t_k} & 0\\ 
        0 & e^{-t_k}
    \end{bmatrix}
    \oplus 
    \begin{bmatrix} 
        e^{-2 t_k} & 0\\ 
        0 & e^{2 t_k}
    \end{bmatrix}
    \oplus 
    \begin{bmatrix} 
        e^{- t_k} & 0\\ 
        0 & e^{t_k}
    \end{bmatrix} \\
    &= \begin{bmatrix}
        1 & a \\ 
        0 & 1 
    \end{bmatrix} 
    \oplus
    \begin{bmatrix} 
        e^{t_k} & 0\\ 
        0 & e^{-t_k}
    \end{bmatrix}
    \oplus 
    \begin{bmatrix} 
        e^{t_k} & 0\\ 
        0 & e^{-t_k}
    \end{bmatrix}^{-2}
    \oplus 
    \begin{bmatrix} 
        e^{t_k} & 0\\ 
        0 & e^{- t_k}
    \end{bmatrix}^{-1} 
\end{align*}
    \indent Set $\alpha := \e^{t_k}$ and define
\begin{gather*} 
    \tilde{x}_1 := \lambda_1 x_1 + \delta_1 \alpha x_2, \quad \tilde{x}_2 := \lambda_1 \alpha x_1 + \delta_1 x_2, \\
    \tilde{y}_1 := \lambda_2 y_1 + \delta_2 \alpha y_2, \quad \tilde{y}_2 := \lambda_2 \alpha y_1 + \delta_2 y_2, \\
    \tilde{z}_1 := \lambda_3 z_1 + \delta_3 \alpha z_2, \quad \tilde{z}_2 := \lambda_3 \alpha z_1 + \delta_3 z_2. \\
\end{gather*}
    \noindent Here, $\lambda_1, \lambda_2, \lambda_3, \delta_1, \delta_2, \delta_3 \in \R$ are nonzero coefficients to be determined explicitly later. For the time being, have in mind that
\begin{align*}
    z_1 = \frac{-1}{\lambda_3 (\alpha^2 -1)} (\tilde{z}_1 - \alpha \tilde{z}_2), \quad z_2 = \frac{-1}{\delta_3 (\alpha^2 -1)} ( \alpha \tilde{z}_1 - \tilde{z}_2).
\end{align*}
    \noindent Recall that $\alpha^2 = k \alpha - 1$ and $\alpha + \alpha^{-1} = k$ because of the choice of $t_k$. Hence, 
\begin{gather*} 
    \exp( \tilde{A} ) \tilde{x}_1 =  \tilde{x}_2, \quad \exp( \tilde{A} ) \tilde{x}_2 = - \tilde{x}_1 + k \tilde{x}_2, \\
    \exp( \tilde{A} ) \tilde{y}_1 =  \tilde{y}_2, \quad \exp( \tilde{A} ) \tilde{y}_2 = - \tilde{y}_1 + k \tilde{y}_2, \\
    \exp( \tilde{A} ) \tilde{z}_1 =  \tilde{z}_2, \quad \exp( \tilde{A} ) \tilde{z}_2 = - \tilde{z}_1 + k \tilde{z}_2. 
\end{gather*}
    \noindent So, the matrix $\exp(\tilde{A})$ in the basis $\{\xi, w_2, \tilde{x}_1, \tilde{x}_2, \tilde{y}_1, \tilde{y}_2, \tilde{z}_1, \tilde{z}_2 \}$ has integers coefficients, namely
\begin{align*}
    \begin{bmatrix} 
        1 & 1 \\ 
        0 & 1 
    \end{bmatrix} 
    \oplus 
    \begin{bmatrix} 
        0 & -1 \\ 
        1 & \phantom{+} k
    \end{bmatrix} 
    \oplus 
    \begin{bmatrix} 
        0 & -1 \\ 
        1 & \phantom{+} k
    \end{bmatrix}^{-2} 
    \oplus 
    \begin{bmatrix} 
        0 & -1 \\ 
        1 & \phantom{+} k
    \end{bmatrix}^{-1} = \begin{bmatrix} 
        1 & 1 \\ 
        0 & 1 
    \end{bmatrix} 
    \oplus 
    \begin{bmatrix} 
        0 & -1 \\ 
        1 & \phantom{+} k
    \end{bmatrix} 
    \oplus 
    \begin{bmatrix} 
        k^2+1 & -k \\ 
        -k & \phantom{+}1
    \end{bmatrix} 
    \oplus 
    \begin{bmatrix} 
        - k & 1 \\
        \phantom{+} 1 & 0
    \end{bmatrix}.
\end{align*}    
    \noindent Now, define
\begin{gather*}
    p := \frac{\lambda_1 \lambda_2}{\lambda_3 (\alpha^2-1)}, \quad q := - \frac{\delta_1 \delta_2}{\delta_3 (\alpha^2-1)}, \\
    r := \lambda_1 \lambda_3 b, \quad s := \delta_1 \delta_3 c, \quad t := \lambda_2 \delta_2 f.
\end{gather*}
    \noindent A direct computation shows that
\begin{gather*}
    [\tilde{x}_1, \tilde{x}_2] = 0, \quad [\tilde{y}_1, \tilde{y}_2] = t (1 - \alpha^2) \xi, \\
    [\tilde{x}_1, \tilde{y}_1] = - (p + q \alpha^3) \tilde{z}_1 + (p \alpha + q \alpha^2) \tilde{z}_2, \\
    [\tilde{x}_1, \tilde{y}_2] = [\tilde{x}_2, \tilde{y}_1] = (p \alpha + q \alpha^2) \tilde{z}_1 + (p \alpha^2 + q \alpha) \tilde{z}_2, \\
    [\tilde{x}_2, \tilde{y}_2] = (p \alpha^2 + q \alpha) \tilde{z}_1 + (p \alpha^3 + q) \tilde{z}_2, \\
    [\tilde{x}_1, \tilde{z}_1] = (r + s \alpha^2) \xi, \quad [\tilde{x}_2, \tilde{z}_2] = (r \alpha^2 + s) \xi, \\
    [\tilde{x}_1, \tilde{z}_2] = [\tilde{x}_2, \tilde{z}_1] = (r + s )\alpha \xi. 
\end{gather*}
    \noindent Have in mind that $[\tilde{x}_1, \tilde{x}_2] = 0$ because we have set $e = 0$; similarly, all other Lie brackets are zero. Thus, a sufficient condition for $\{\xi, w_2, \tilde{x}_1, \tilde{x}_2, \tilde{y}_1, \tilde{y}_2, \tilde{z}_1, \tilde{z}_2 \}$ to be a rational basis of $\n$ is     
\begin{gather}
    p + q \alpha^3 \in \Q, \quad p \alpha + q \alpha^2 \in \Q, \quad    p \alpha^2 + q \alpha \in \Q, \quad p \alpha^3 + q \in \Q, \label{eq: first four} \\
    r + s \alpha^2 \in \Q, \quad (r + s) \alpha \in \Q, \quad r \alpha^2 + s \in \Q, \label{eq: second three} \\
    t (1 - \alpha^2) \in \Q.\label{eq: last one}
\end{gather}  
    \noindent Let's focus on the four conditions in equation \eqref{eq: first four}. We take $p$, $q$ to be in the subfield $\Q(\alpha)$ of $\R$, and argue that there are solutions there. This means to write $p = p_1 + p_2 \alpha$ and $q = q_1 + q_2 \alpha$ for some $p_1$, $p_2$, $q_1$, $q_2 \in \Q$. Recall that $\alpha^2 = k \alpha - 1$ and $\alpha^3 = (k^2-1) \alpha - k$. Putting all together,
\begin{align*}
    p + q \alpha^3 & = -(-p_1 + kq_1 + (k^2-1) q_2) + ( p_2 + (k^2-1)q_1 + k(k^2-2) q_2 ) \alpha, \\
    p \alpha + q \alpha^2 & = - (p_2+q_1+kq_2) + (p_1 + k (p_2 + q_1) + (k^2-1)q_2) \alpha, \\
    p \alpha^2 + q \alpha & = - (p_1 + q_2 + kp_2) + (q_1 + k (q_2 + p_1) + (k^2-1)p_2) \alpha, \\
    p \alpha^3 + q & = - ( k p_1 + (k^2-1) p_2 - q_1) + ( (k^2-1) p_1 + k (k^2-2) p_2 + q_2 ) \alpha. 
\end{align*}
    \noindent So, the four conditions in equation \eqref{eq: first four} are satisfied in $\Q(\alpha)$ if and only if
\begin{gather*}
    \begin{cases}
        p_2 + (k^2-1)q_1 + k(k^2-2) q_2  = 0, \\
        p_1 + k (p_2 + q_1) + (k^2-1)q_2 = 0, \\
        q_1 + k(p_1 + q_2) + (k^2-1)p_2 = 0, \\
        (k^2-1) p_1 + k (k^2-2) p_2 + q_2 = 0, 
    \end{cases} 
\end{gather*}
    \noindent for some $p_1$, $p_2$, $q_1$, $q_2 \in \Q$. The matrix of this system is
\begin{align*}
    M = 
    \begin{bmatrix}
        0 & 1 & k^2 -1 & k(k^2-2) \\
        1 & k & k & k^2-1 \\
        k & k^2-1 & 1 & k \\
        k^2-1 & k(k^2-2) & 0 & 1
    \end{bmatrix}, 
\end{align*}
    \noindent and can be readily shown to have rank $2$. Thus, nontrivial solutions exist. Moreover, all solutions are parametrized by $(u,v) \in \Q^2$ as
\begin{align*}
    p_1 = -\frac{k(k^2-2)}{k^2 -1} u - \frac{1}{k^2 -1}v, \quad p_2 = u, \quad q_1 = - \frac{1}{k^2 -1} u - \frac{k(k^2-2)}{k^2 -1} v, \quad q_2 = v. 
\end{align*}
    \noindent Therefore, the most general solution of equation \eqref{eq: first four} with $p$, $q \in \Q(\alpha)$ is given by
\begin{align*}
    p = -\left(\frac{k(k^2-2)}{k^2 -1} u + \frac{1}{k^2 -1}v\right) + u \alpha, \quad q = -\left(\frac{1}{k^2 -1} u +\frac{k(k^2-2)}{k^2 -1} v\right) + v \alpha. 
\end{align*}
    \noindent In particular, if $(u,v) = (0,1-k^2)$ then
\begin{align*}
    p = 1, \quad q = k(k^2-2) - (k^2-1)\alpha. 
\end{align*}
    \noindent Let's turn to the three conditions in equation \eqref{eq: second three}. We rewrite them using that $\alpha^2 = k \alpha - 1$ as
\begin{align*}
    (r - s) + k s \alpha = l, \quad (r+s) \alpha = m, \quad - (r-s) + k r \alpha = n
\end{align*}
    \noindent for some $l$, $m$, $n \in \Q$. Notice that if we add the first and last equations we get
\begin{align*}
    l + n = (r+s) k \alpha = k m \quad \Longrightarrow \quad r+s = \frac{m}{\alpha} = \frac{l+n}{k \alpha},
\end{align*}
    \noindent and if we substract them we get
\begin{align*}
    l - n = (r-s) (2 - k \alpha) \quad \Longrightarrow \quad r - s = \frac{l - n}{2 - k \alpha}. 
\end{align*}
    \noindent Thus, the general solution of equation \eqref{eq: second three} are parametrized by $(u,w) \in \Q^2$ as
\begin{align*}
    r = \frac{1}{2} \left( \frac{l+n}{k \alpha} + \frac{l - n}{2 - k \alpha} \right), \quad s = \frac{1}{2} \left( \frac{l+n}{k \alpha} - \frac{l-n}{2 - k \alpha} \right)
\end{align*}
    \noindent In particular, if $(l,n) = (k,k)$ then
\begin{align*}
    r = s = \frac{1}{\alpha}
\end{align*}
    \noindent Set $\ell := k(k^2-2) - (k^2-1)\alpha$, which is nonzero since $\alpha$ is irrational. If we choose 
\begin{gather}
    \lambda_1 = 1, \quad \lambda_2 = \alpha^2 - 1, \quad \lambda_3 = 1, \notag \\
    \delta_1 = 1, \quad \delta_2 = - (\alpha^2 - 1) \ell, \quad \delta_3 = 1, \notag \\ 
    b = \frac{1}{\alpha}, \quad c = \frac{1}{\alpha}, \quad f = -\frac{1}{( \alpha^2-1)^3 \ell}, \label{eq: b, c, f}
\end{gather}
    \noindent then we see that all conditions in equations \eqref{eq: first four}, \eqref{eq: second three}, and \eqref{eq: last one} are satisfied. Therefore, for this choice of parameters, $\{\xi, w_2, \tilde{x}_1, \tilde{x}_2, \tilde{y}_1, \tilde{y}_2, \tilde{z}_1, \tilde{z}_2\}$ is a rational basis of $\n$. According to Proposition \ref{prop: yamada}, $G_A$ has lattices.  
\end{proof}
\begin{remark}
    The choice of parameters $a$, $b$, $c$, $e$, $f \in \R$ in the proof of Proposition \ref{prop: BG has lattices} is not unique. Therein, $a$ is left unconstrained, $e$ is taken to be zero, and $b$, $c$, $f$ are determined after $k \in \Z$ as in equation \eqref{eq: b, c, f}. 
\end{remark}
\begin{remark} \label{obs: byproduct} 
     The proof of Proposition \ref{prop: BG has lattices} contains also a proof that $H_{\mathrm{BG}}$, the simply connected Lie group corresponding to $\h_{\mathrm{BG}}$, admits lattices. Moreover, our proof is different than the one in \cite{SY}. The argument is roughly the same, having to forget to account for $\xi$ (or take all parameters $a$, $b$, $c$, $e$, $f$ to be zero, so the extension is trivial). The same change of basis in $\n$ works; in particular, we arrive at the same conditions in equation \eqref{eq: first four} (but not to the conditions in equations \eqref{eq: second three} and \eqref{eq: last one}). 
\end{remark}
    \indent Thus, combining Proposition \ref{prop: BG is not 2-lefschetz} and Proposition \ref{prop: BG has lattices}, one obtains a contact $1$-Lefschetz completely solvable solvmanifold $\Gamma \backslash G_{\mathrm{BG}}$ that is not $2$-Lefschetz, the contact form on it being the invariant one induced by $\eta$ on $\g_{\mathrm{BG}}$. The fact that $\Gamma \backslash G_{\mathrm{BG}}$ is not $2$-Lefschetz follows from  Proposition \ref{prop: sometimes it is an isomorphism}(ii), since $\g_A$ is completely solvable.

\ 

\begin{thebibliography}{99}


\bibitem{AFV} 
A.\ Andrada, A.\ Fino, and L.\ Vezzoni. A class of Sasakian 5-manifolds. \textit{Transform. Groups} \textbf{14}, No. \textbf{3}, 493--512 (2009). 

\bibitem{AG1}
A.\ Andrada and A.\ Garrone. Construction of symplectic solvmanifolds satisfying the hard-Lefschetz condition. \textit{Linear Algebra Appl.} \textbf{706}, 70--100 (2025). 

\bibitem{AG2}
A.\ Andrada and A.\ Garrone. Symplectic solvmanifolds not satisfying the hard-Lefschetz condition. Preprint, arXiv:2505.08113 [math.DG] (2025).



\bibitem{etal}
R.\ M.\ Arroyo, M.\ L.\ Barberis, V.\ Díaz, Y.\ Godoy and I.\ Hernández. Classification of almost abelian Lie groups admitting left-invariant complex or symplectic structures. \textit{J. Geom. Anal.} \textbf{35}, No. \textbf{11}, Paper No. \textbf{331} (2025).


\bibitem{BG1}
C.\ Benson and C.\ S.\ Gordon. Kähler and symplectic structures on nilmanifolds. \textit{Topology} \textbf{27}, No. \textbf{4}, 513--518 (1988).

\bibitem{BG2}
C.\ Benson and C.\ S.\ Gordon. Kähler structures on compact solvmanifolds. \textit{Proc. Am. Math. Soc.} \textbf{108}, No. \textbf{4}, 971--980 (1990).

\bibitem{Blair}
D.\ E.\ Blair. \textit{Riemannian Geometry of Contact and Symplectic Manifolds}. Progress in Mathematics \textbf{203}, Birkhäuser (2010). 

\bibitem{Bock}
C.\ Bock. Odd-dimensional solvmanifolds are contact. Preprint, arXiv:2110.04930v4 [math.SG] (2024).

\bibitem{BEM}
M.\ S.\ Borman, Y.\ Eliashberg, and E.\ Murphy. Existence and classification of overtwisted contact structures in all dimensions. \textit{Acta Math.} \textbf{215}, 281--361 (2015).

\bibitem{BW}
W.\ M.\ Boothby and H.\ C.\ Wang. On contact manifolds. \textit{Ann. Math.} \textbf{68}, 721--734 (1958).



\bibitem{CF}
G.\ Calvaruso and A.\ Fino. Five-dimensional $K$-contact Lie algebras. \textit{Monatsh. Math.} \textbf{167}, No. \textbf{1}, 35--59 (2012).

\bibitem{Cagliari 1}
B.\ Cappelletti-Montano, A.\ de Nicola, J.\ C.\ Marrero, and I.\ Yudin. Examples of compact $K$-contact manifolds with no Sasakian metric. \textit{Int. J. Geom. Methods Mod. Phys.} \textbf{11}, No. \textbf{9}, 6648--6660 (2014).

\bibitem{Cagliari 3} 
B.\ Cappelletti-Montano, A.\ de Nicola, J.\ C.\ Marrero, and I.\ Yudin. Sasakian nilmanifolds. \textit{Int. Math. Res. Not.} \textbf{2015}, No. \textbf{15}, 6648--6660 (2015). 

\bibitem{Cagliari 4} 
B.\ Cappelletti-Montano, A.\ de Nicola, J.\ C.\ Marrero, and I.\ Yudin. A non-Sasakian Lefschetz $K$-contact manifold of Tievsky type. \textit{Proc. Am. Math. Soc.} \textbf{144}, No. \textbf{12}, 5341--5350 (2016).

\bibitem{Cagliari 2} 
B.\ Cappelletti-Montano, A.\ de Nicola, and I.\ Yudin. Hard Lefschetz theorem for Sasakian manifolds. \textit{J. Differ. Geom}. \textbf{101}, No. \textbf{1}, 47--66 (2015).

\bibitem{CM}
L.\ P.\ Castellanos Moscoso. Left-invariant symplectic structures on diagonal almost abelian Lie groups. \textit{Hiroshima Math. J.} \textbf{52}, 357--378 (2022). 

\bibitem{Chu}
B.\ Chu. Symplectic homogeneous spaces. \textit{Trans. Am. Math. Soc.} \textbf{197}, 145--159 (1974). 

\bibitem{dNY}
A.\ de Nicola and I.\ Yudin. Nilpotent aspherical Sasakian manifolds. \textit{Int. Math. Res. Not.} \textbf{2024}, No. \textbf{15}, 11221--11238 (2024).


\bibitem{DM}
A.\ Diatta and B.\ Manga. On properties of principal elements of Frobenius Lie algebras. \textit{J. Lie Theory} \textbf{24}, No. \textbf{3}, 849--864 (2014). 


\bibitem{EKA}
A.\ El Kacimi-Alaoui. Opérateurs transversalement elliptiques sur un feuilletage riemannien et applications, \textit{Compositio Math}. \textbf{73}, No. \textbf{1}, 57--106 (1990). 

\bibitem{FMU}
M.\ Fernández, V.\ Muñoz, and L.\ Ugarte. Weakly Lefschetz symplectic manifolds. \textit{Trans. Am. Math. Soc.} \textbf{359}, No. \textbf{4}, 1851--1873 (2007).


\bibitem{Fujitani}
T.\ Fujitani, Complex-valued differential forms on normal contact Riemannian manifolds. \textit{Tohoku Math. J.}, II Ser. \textbf{18}, 349--361 (1966). 

\bibitem{Fuks}
D.\ B.\ Fuks. \textit{Cohomology of infinite-dimensional Lie algebras}. Monographs in contemporary mathematics, Springer (1986). 


\bibitem{GW}
C.\ S.\ Gordon and E.\ N.\ Wilson. The spectrum of the Laplacian on Riemannian Heisenberg manifolds. \textit{Mich. Math. J}. \textbf{33}, 253--271 (1986).


\bibitem{Hano}
J.\ Hano. On Kählerian homogeneous spaces of unimodular Lie groups. \textit{Am. J. Math.} \textbf{79}, 885--900 (1957).

\bibitem{Hasegawa 1}
K.\ Hasegawa. Minimal models of nilmanifolds. \textit{Proc. Am. Math. Soc.} \textbf{106}, No. \textbf{1}, 65--71 (1989).

\bibitem{Hasegawa 2}
K.\ Hasegawa. A note on compact solvmanifolds with Kähler structures. \textit{Osaka J. Math}. \textbf{43}, No. \textbf{1}, 131--135 (2006).



\bibitem{Kasuya1}
H.\ Kasuya, Formality and hard-Lefschetz condition of aspherical manifolds, \textit{Osaka J. Math.} \textbf{50}, 439-–455 (2013). 

\bibitem{Kasuya2}
H.\ Kasuya. Cohomologies of Sasakian groups and Sasakian solvmanifolds. \textit{Ann. Mat. Pura Appl.} \textbf{195}, No. \textbf{5}, 1713--1719 (2016). 

\bibitem{Kasuya3}
H.\ Kasuya. Mixed Hodge structures and Sullivan’s minimal models of Sasakian manifolds. \textit{Ann. Inst. Fourier} \textbf{67}, No. \textbf{6}, 2533--2546 (2017).
    
\bibitem{Kutsak}
S.\ Kutsak. Invariant contact structures on 7-dimensional nilmanifolds. \textit{Geom. Dedicata} \textbf{172}, 351--361 (2014).

\bibitem{Hattori}
A.\ Hattori. Spectral sequence in the de Rham cohomology of fibre bundles. \textit{J. Fac. Sci. Univ. Tokyo Sect. I} \textbf{8}, 289--331 (1960). 

\bibitem{LW} 
J.\ Lauret and C.\ Will. On the symplectic curvature flow for locally homogeneous manifolds. \textit{J. Symplectic Geom.} \textbf{15}, 1--49 (2014). 

\bibitem{Lin}
Y.\ Lin. Hodge theory on transversely symplectic foliations. \textit{Q. J. Math.} \textbf{69}, No. 2, 585--609 (2018).


\bibitem{Malcev}
A.\ Malcev. On a class of homogeneous spaces. \textit{Izv. Akad. Nauk SSSR} \textbf{13}, 9--32 (1949); English translation in \textit{Am. Math. Soc. Transl.} \textbf{39} (1951). 

\bibitem{McDuff}
D.\ McDuff. The moment map for circle actions on symplectic manifolds. \textit{J. Geom. Phys.} \textbf{5}, No. \textbf{2}, 149--160 (1988). 

\bibitem{Milnor} 
J.~Milnor. Curvatures of left invariant metrics on Lie groups. \textit{Adv. Math.} \textbf{21}, 293--329 (1976). 

\bibitem{Milovanov}
M.\ V.\ Milovanov. A description of solvable Lie groups with a given uniform subgroup. \textit{Math. USSR Sbornik} \textbf{41}, 83--99 (1982).

\bibitem{Mostow 1}
G.~D.~Mostow. Factor spaces of solvable groups. \textit{Ann. Math.} \textbf{60}, 1--27 (1954). 

\bibitem{Mostow 2}
G.~D.~Mostow. Cohomology of topological groups and solvmanifolds.
\textit{Ann. Math.} \textbf{73}, 20--48 (1961).

\bibitem{Nomizu}
K.~Nomizu. On the cohomology of compact homogeneous spaces of nilpotent Lie groups. \textit{Ann. Math.} \textbf{59}, 531--538 (1954). 


\bibitem{Raghunathan}
M.~S.~Raghunathan. \textit{Discrete subgroups of Lie groups}. Ergebnisse der Mathematik und ihrer Grenzgebiete, Band \textbf{68}, Springer-Verlag (1972). 


\bibitem{SY}
H.\ Sawai and T.\ Yamada. Lattices on Benson–Gordon type solvable Lie groups. \textit{Topology Appl.} \textbf{149}, 85–95 (2005)

\bibitem{Saito1}
M.\ Saito. Sur certains groupes de Lie résolubles II. \textit{Sci. Papers Coll. Gen. Ed. Univ. Tokyo} \textbf{7}, 157--168 (1957). 

\bibitem{Saito2}
M.\ Saito. Sous-groupes discrets des groupes résolubles. \textit{Am. J. Math.} \textbf{83}, 369--392 (1961).

\bibitem{Tseng-Yau 1}
L.\ S.\ Tseng and S.\ T.\ Yau. Cohomology and Hodge theory on symplectic manifolds I. \textit{J. Differ. Geom}. \textbf{91}, No. \textbf{3}, 383--416 (2012).

\bibitem{Tseng-Yau 2} 
L.\ S.\ Tseng and S.\ T.\ Yau. Cohomology and Hodge theory on symplectic manifolds II. \textit{J. Differ. Geom}. \textbf{91}, No. \textbf{3}, 417--443 (2012).

\bibitem{VGS}
E.\ B.\ Vinberg, V.\ V.\ Gorbatsevich, and O.\ V.\ Shvartsman. Discrete subgroups of Lie groups. Lie groups and Lie algebras II. Encyclopaedia of mathematical sciences \textbf{21}, 1--123 (2000). 

\bibitem{Witte} 
D.\ Witte. Superrigidity of lattices in solvable Lie groups. \textit{Invent. Math.} \textbf {122}, 147--193 (1995).

\bibitem{Yamada}
T. Yamada. A construction of lattices in splittable solvable Lie groups, \textit{Kodai Math. J.} \textbf{39} (2016), 378--388.

\end{thebibliography}
\end{document}